\newtheorem{theorem}{Theorem}[section]
\newtheorem{lemma}[theorem]{Lemma}
\newtheorem{corollary}[theorem]{Corollary}
\newtheorem{proposition}[theorem]{Proposition}
\theoremstyle{definition}
\newtheorem{example}[theorem]{Example}
\newtheorem{remark}[theorem]{Remark}
\newtheorem{definition}[theorem]{Definition}
\newtheorem*{A}{Theorem A}
\newtheorem*{B}{Theorem B}
\numberwithin{equation}{section}
\newcommand{\betabarra}{\bar{\beta}}
\begin{document}
\title[Newton-Okounkov bodies]{Newton-Okounkov bodies of exceptional curve valuations}

\author[C. Galindo]{Carlos Galindo}

\address{Universitat Jaume I, Campus de Riu Sec, Departamento de Matem\'aticas \& Institut Universitari de Matem\`atiques i Aplicacions de Castell\'o, 12071
Caste\-ll\'on de la Plana, Spain.}\email{galindo@uji.es}  \email{moyano@uji.es}

\author[F. Monserrat]{Francisco Monserrat}
\address{Instituto Universitario de
Matem\'atica Pura y Aplicada, Universidad Polit\'ecnica de Valencia,
Camino de Vera s/n, 46022 Valencia (Spain).}
\email{framonde@mat.upv.es}

\author[J.J. Moyano-Fern\'andez]{Julio Jos\'e Moyano-Fern\'andez}


%
\author[M. Nickel]{Matthias Nickel}
\address{Goethe-Universit\"at Frankfurt, FB Informatik und Mathematik, 60054 Frankfurt am Main, Germany.}
\email{nickel@math.uni-frankfurt.de}
\subjclass[2010]{Primary: 14C20, 14E15, 13A18}
\keywords{Newton-Okounkov bodies; Flags; Exceptional curve valuations; Non-positive at infinity valuations}
\thanks{The first three authors were partially supported by the Spanish Government Ministerio de Econom\'ia, Industria y Competitividad (MINECO), grants  MTM2015-65764-C3-2-P and MTM2016-81735-REDT, as well as by Universitat Jaume I, grant P1-1B2015-02.}

\begin{abstract}
We prove that the Newton-Okounkov body of the flag $E_{\bullet}:= \left\{ X=X_r \supset E_r \supset \{q\} \right\}$, defined by the surface $X$ and the exceptional divisor $E_r$ given by any divisorial valuation of the complex projective plane $\mathbb{P}^2$, with respect to the pull-back of the line-bundle $\mathcal{O}_{\mathbb{P}^2} (1)$ is either a triangle or a quadrilateral, characterizing when it is a triangle or a quadrilateral. We also describe the vertices of that figure.  Finally, we introduce a large family of flags for which we determine explicitly their Newton-Okounkov bodies which turn out to be triangular.
\end{abstract}

\maketitle

\section{Introduction}
Newton polygons are probably the first example of the usage of polyhedral objects to study algebraic varieties, curves in this case.  Newton-Okounkov bodies are more complicated objects that pursue the same idea; they  associate a convex body to a big divisor on a smooth irreducible normal projective variety $X$, with respect to a specific flag of subvarieties of $X$, via the corresponding valuation on the function field of $X$. They were introduced by Okounkov \cite{ok96, ok97, ok03} and independently developed by Lazarsfeld and Musta{\c t}{\u a} \cite{LM}, on the one hand, and Kaveh and Khovanskii \cite{KK}, on the other. One of the main advantages of these bodies is that their convex structure helps in the study of the asymptotic behavior of the linear systems given by the divisor and the valuation, the structure of the Mori cone of $X$, and positivity properties of divisors on $X$ \cite{BKMS, KL1, KL2, KL3, KLM}. The computation of Newton-Okounkov bodies is a very hard task and, sometimes, their behavior is unexpected \cite{KLM}. In this regard, little is known about them, even when $X$ is a surface. For these varieties, we know that they are polygons with rational slopes and can be computed from Zariski decompositions of divisors \cite{LM}.

Let $p \in \mathbb{P}^2 := \mathbb{P}^2_\mathbb{C}$ be a point in the complex projective plane and consider a surface $X$ obtained after a sequence of finitely many simple point blow-ups starting with the blow-up of $p$, where simple means that we only blow points in the exceptional divisor created last. We devote this paper to explicitly describe the  Newton-Okounkov body of a flag $E_{\bullet}:= \left\{ X=X_r \supset E_r \supset \{q\} \right\}$, with respect to the pull-back $H$ on $X$ of the line bundle $\mathcal{O}_{\mathbb{P}^2} (1)$, where $E_r$ is the last obtained exceptional divisor and $q$ a point on  it. These flags define and are defined by specific representatives $\nu = \nu_{E_{\bullet}}$ of the so-called exceptional curve valuations. This class of valuations corresponds to one of the five types of valuations of the fraction field of the local ring $\mathcal{O}_{\mathbb{P}^2,p}$ appearing in the classification given by Spivakovsky in \cite{spiv}; the name comes from \cite{FJ}. Notice that their rank and rational rank equal 2 and their transcendence degree is $0$. These valuations are usually considered up to equivalence \cite{z-s} and one can attach a Newton-Okounkov body to any representative whose value group is included in $\mathbb{R}^2$.

Consider the flag $E_{\bullet}$ and let $\nu$ be its exceptional curve valuation. The above mentioned Newton-Okounkov body will be denoted by $\Delta_\nu (H) = \Delta_\nu$. The valuation $\nu$ has two components: $\nu = (\upsilon_1, \upsilon_2)$, where $\upsilon_1 := \nu_r$ is the divisorial valuation, with $\mathbb{Z}$ as value group, defined by the divisor $E_r$. We denote by $\{\beta'_j (\nu_r)\}_{j=0}^{g+1}$ the so-called sequence of Puiseux exponents of the valuation $\nu_r$ (see Subsection \ref{Puiseux}) which determines, and it is determined by, the dual graph of the valuation $\nu_r$; then we say that $\nu_r$ has $g$ Puiseux pairs. The pair $(\nu_r,H)$ comes with an interesting value, denoted by $\hat{\mu} (\nu_r)$, which is defined as $\lim_{m \rightarrow \infty} m^{-1} a(mL)$, where $a(mL)$ is the last value of the vanishing sequence of $H^0(mL)$ along $\nu_r$ for a line $L$ of $\mathbb{P}^2$. This value, introduced in \cite{BKMS}, is an analogue of the Seshadri constant for the valuation $\nu_r$. We recall that the Seshadri constants were considered by Demailly in \cite{5Nach} in the study of the Fujita conjecture. In our case
\[
\hat{\mu}(\nu):=\lim_{d\rightarrow \infty} \frac{\max\{\nu(f)\mid f\in \mathbb{C}[u,v] \mbox{ and } \deg(f)\leq d\}}{d},
\]
where $(u,v)$ are coordinates in an affine chart containing $p$ as the origin.

Returning to Newton-Okounkov bodies, in \cite{cil} it is considered a sort of normalized Newton-Okounkov body for valuations $\nu = \nu_{E_{\bullet}}$ corresponding to valuations $\nu_r$ with only one Puiseux pair and whose point $q$ is not free. There, the authors prove that these Newton-Okounkov bodies are triangles or quadrilaterals whose vertices depend on the Puiseux exponents of $\nu_r$ and the value $\hat{\mu} (\nu_r)$. As we will show (see Remark \ref{315}), the Newton-Okounkov bodies considered in \cite{cil} are nothing but those attached to a different representative $\nu'$ of the valuation $\nu$. The motivation for considering $\nu'$ is to study the variation of infinitesimal Newton-Okounkov bodies as $E_r$ and $q$ vary.

The first main goal in this paper is to give a completely explicit expression of the Newton-Okounkov body $\Delta_\nu$ attached to any flag as above. Let us explain our result.  For a start, recall that the volume of a divisorial valuation $\nu_r$ is defined as
 \[
\mathrm{vol}(\nu_r) =\lim_{\alpha \rightarrow \infty} \frac{\mathrm{dim}_{\mathbb{C}} ( \mathcal{O}_{\mathbb{P}^2,p} \; / \;  \mathcal{P}_\alpha)}{\alpha^2/2},
\]
where $\mathcal{P}_\alpha = \{f \in \mathcal{O}_{\mathbb{P}^2,p} | \nu_r(f) \geq \alpha\} \cup \{0\}$. It is satisfied that $\hat{\mu} (\nu_r) \geq  \sqrt{1/\mathrm{vol}(\nu_r)}$ and $\nu_r$ is said to be minimal when the equality holds. An exceptional curve valuation $\nu$ is named {\it minimal} whenever its first component is minimal.

The following result, which is Theorem \ref{nobminimal} in the paper, describes $\Delta_\nu$ for exceptional curve valuations which are minimal.

\begin{A} {\it Consider a flag $E_{\bullet}:= \left\{ X=X_r \supset E_r \supset \{q\} \right\}$ and $\nu = \nu_{E_{\bullet}}$ its attached valuation. Then, the valuation $\nu$ is minimal if and only if the Newton-Okounkov body $\Delta_\nu$ is a triangle whose vertices are $(0,0),\;\left(\hat{\mu}(\nu_r), \frac{\hat{\mu}(\nu_r)\betabarra_0(\nu_{\eta}) }{\betabarra_0(\nu_r)} \right)$  $\mbox{and}\; \left(\hat{\mu}(\nu_r), \frac{\hat{\mu}(\nu_r)\betabarra_g(\nu_{\eta})}{\betabarra_g(\nu_r)} \right),$
whenever $q$  belongs to the intersection of the strict transform of two exceptional divisors, i.e.  $ q \in E_r\cap E_{\eta}$ with $r\neq \eta$. Otherwise, these vertices are
$(0,0),$ $\left(\hat{\mu}(\nu_r), 0 \right)$ $ \mbox{and}\; \left(\hat{\mu}(\nu_r), \frac{\hat{\mu}(\nu_r)}{\betabarra_{g+1}(\nu_r)} \right).$ }
\end{A}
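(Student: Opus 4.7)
The plan is to combine the Lazarsfeld--Musta{\c t}{\u a} description of Newton--Okounkov bodies on surfaces via Zariski decomposition with a combinatorial analysis of the exceptional configuration of $\nu_r$ encoded in its Puiseux data. Specifically, one has the slice formula
\[
\Delta_\nu(H)=\left\{(t,s)\in\R^2\ \vrule\ 0\le t\le \hat{\mu}(\nu_r),\ \alpha(t)\le s\le \beta(t)\right\},
\]
where $H-tE_r=P_t+N_t$ is the Zariski decomposition, $\alpha(t):=\mathrm{ord}_q(N_t|_{E_r})$ and $\beta(t):=\alpha(t)+P_t\cdot E_r$; the upper endpoint $\hat{\mu}(\nu_r)$ of the $t$-range follows from the very definition of $\hat{\mu}(\nu_r)$, so the vertex $(0,0)$ and the $t$-coordinates of the remaining vertices are already in hand.

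The central step is to prove that the minimality of $\nu_r$ is equivalent to the support of $N_t$ being constant on $(0,\hat{\mu}(\nu_r))$. Expressing $H-tE_r$ in the basis $\{H,\check E_1,\ldots,\check E_r\}$ of total transforms and using the proximity matrix, one determines the maximal $\Q$-effective divisor $N_t$ supported on the exceptional chain such that $P_t:=H-tE_r-N_t$ is nef; Zariski's algorithm creates a break point exactly when $(H-tE_r)\cdot C$ hits $0$ for a new irreducible curve $C$. The identity $\hat{\mu}(\nu_r)^2\cdot\mathrm{vol}(\nu_r)=1$ characterising minimality translates into $P_{\hat{\mu}(\nu_r)}$ lying on the face of $\mathrm{Nef}(X)$ dual to the very support that enters $N_t$ already for arbitrarily small $t>0$; by contrast, non-minimality produces a strictly smaller critical $t_0<\hat{\mu}(\nu_r)$ at which a further component enters $N_t$, introducing the extra break point that turns the triangle into a genuine quadrilateral. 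This yields the ``iff'' part.

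Granting this, both $\alpha(t)$ and $\beta(t)$ are linear on $[0,\hat{\mu}(\nu_r)]$, so $\Delta_\nu(H)$ is the triangle with vertices $(0,0)$, $(\hat{\mu}(\nu_r),\alpha(\hat{\mu}(\nu_r)))$ and $(\hat{\mu}(\nu_r),\beta(\hat{\mu}(\nu_r)))$. To identify the ordinates, one writes $N_t=t\cdot N$ and uses the self-intersection formulae for total transforms: $P_t\cdot E_r$ is linear in $t$ and evaluates at $\hat{\mu}(\nu_r)$ to $\hat{\mu}(\nu_r)/\betabarra_{g+1}(\nu_r)$ in the free case, via the continued-fraction description of $\betabarra_{g+1}(\nu_r)$ in terms of the proximity chain ending at $E_r$; in the satellite situation $q\in E_r\cap E_\eta$, the decomposition of $-E^{\ast}_r$ in the exceptional basis produces the two slopes $\betabarra_0(\nu_\eta)/\betabarra_0(\nu_r)$ and $\betabarra_g(\nu_\eta)/\betabarra_g(\nu_r)$, while in the free case $\alpha(\hat{\mu}(\nu_r))=0$ because no component of $N_t$ meets $q$.

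The main obstacle is the second paragraph: translating minimality---an asymptotic statement about volumes---into the combinatorial statement that no new curve enters the negative part of the Zariski decomposition before $t=\hat{\mu}(\nu_r)$. The slope identification, although ultimately a finite combinatorial computation, also rests on the explicit form of the proximity matrix and on the continued-fraction expansions of the Puiseux exponents developed in the earlier subsection.
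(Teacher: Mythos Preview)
Your approach via Zariski decomposition is a legitimate route, but there is a genuine error in the ``iff'' direction. You write that non-minimality ``introduces the extra break point that turns the triangle into a genuine quadrilateral''. This is false: as the paper itself shows later (Theorem~\ref{53} together with Theorem~\ref{triangulo}, and Examples~1 and~2), $\Delta_\nu$ can perfectly well be a triangle when $\nu$ is not minimal. What distinguishes the minimal case is not ``triangle vs.\ quadrilateral'' but rather ``this specific triangle $\mathfrak{C}(\nu)\cap\mathfrak{H}(\nu)$ vs.\ something strictly smaller''. In the non-minimal case the break at $t_0=\bar\beta_{g+1}(\nu_r)/\hat\mu(\nu_r)<\hat\mu(\nu_r)$ forces at least one of $\alpha,\beta$ to be non-linear on $[0,\hat\mu(\nu_r)]$, so $\Delta_\nu$ cannot have both non-origin vertices on the line $t=\hat\mu(\nu_r)$; but it may still be a triangle, just with one vertex at $t=t_0$. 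Your argument as written does not establish the converse implication.

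The paper's proof sidesteps this difficulty entirely by an area comparison: it first shows (Proposition~\ref{tri}) that the specific triangle $T:=\mathfrak{C}(\nu)\cap\mathfrak{H}(\nu)$, bounded by the cone generated by the maximal contact values $\bar\beta_j(\nu)$ and the half-plane $x\le\hat\mu(\nu_r)$, always contains $\Delta_\nu$; then it computes (via Lemma~\ref{45}) that $\mathrm{area}(T)=\hat\mu(\nu_r)^2/(2\bar\beta_{g+1}(\nu_r))$, while $\mathrm{area}(\Delta_\nu)=\tfrac12\mathrm{vol}_X(H)=\tfrac12$ by the general volume formula. Hence $\Delta_\nu=T$ iff the areas agree iff $\hat\mu(\nu_r)^2=\bar\beta_{g+1}(\nu_r)=1/\mathrm{vol}(\nu_r)$, which is the definition of minimality. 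This argument never touches Zariski decomposition and identifies the vertices directly from the semigroup of values rather than from proximity-matrix computations; it is both shorter and avoids the pitfall you fell into.
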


The values $\{\betabarra_{j} (\nu_i) \}_{j=0}^{g+1}$ in the above result  are the so-called maximal contact values of the divisorial valuations $\nu_i$, defined by the exceptional divisors $E_i$ appearing in the sequence of point blowing-ups given by the divisorial valuation $\nu_r$, where either $i=r$ or $i=\eta$ (see Subsection \ref{Puiseux}). Notice that the maximal contact values generate the value semigroup of the corresponding valuations and can be easily computed from their dual graphs.

When $\nu$ (and so $\nu_r$) is not minimal, there exists a unique integral curve $C$ given by a polynomial $f\in \mathbb{C}[u,v]$ such that $\hat{\mu}(\nu_r) = \nu_r(f) / \deg(f)$, which is named supra-minimal for $\nu_r$ (see Lemma \ref{supra}). Setting $$ \frac{\nu_r(f)}{\deg(f)} = (\hat{\mu}(\nu_r),c),$$
we are able to prove the following result (Theorem \ref{53} in the paper) which describes the Newton-Okounkov bodies of flags $E_{\bullet}$ corresponding to non-minimal valuations $\nu$.

\begin{B} {\it Assume that $\nu = \nu_{E_{\bullet}}$ is not minimal, then the Newton-Okounkov body $\Delta_{\nu}$ is the convex hull of the set $\{(0,0), Q_1, Q_2, Q_3\}$, where $$Q_1=\frac{1}{\hat{\mu}(\nu_r)}(\betabarra_{g+1}(\nu_r), \nu_{r}(\varphi_{\eta})),  \;\;\; Q_2=\frac{1}{\hat{\mu}(\nu_r)}(\betabarra_{g+1}(\nu_r), \nu_{r}(\varphi_{\eta})+1)$$ and $Q_3=(\hat{\mu}(\nu_r),c)$ whenever $q \in E_r\cap E_{\eta}$ with $r\neq \eta$. Otherwise, these points are
$Q_1=\frac{1}{\hat{\mu}(\nu_r)}(\betabarra_{g+1}(\nu_r), 0)$, $Q_2=\frac{1}{\hat{\mu}(\nu_r)}(\betabarra_{g+1}(\nu_r),1)$ and $Q_3=(\hat{\mu}(\nu_r),c)$. As a consequence, $\Delta_{\nu}$ is either a triangle or a quadrilateral.}
\end{B}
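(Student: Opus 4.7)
My plan is to apply the Lazarsfeld--Musta{\c t}{\u a} slicing description of Newton--Okounkov bodies on smooth surfaces to the flag $E_\bullet$ and the big divisor $H$ on $X$. This expresses $\Delta_\nu$ as the region
\[
\Delta_\nu \;=\; \bigl\{ (t,y)\in\mathbb{R}^2 : 0\le t\le \hat{\mu}(\nu_r),\; \alpha(t)\le y\le \beta(t)\bigr\},
\]
where the piecewise-linear concave functions $\alpha,\beta$ come from the Zariski decomposition $H-tE_r=P_t+N_t$: $\alpha(t)$ is the coefficient of $q$ in $N_t|_{E_r}$, and $\beta(t)-\alpha(t)=P_t\cdot E_r$. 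The left endpoint $t=0$ reflects that $H$ is pulled back from $\mathbb{P}^2$ and does not vanish along $E_r$, while the right endpoint is $\hat{\mu}(\nu_r)$ by its definition as the pseudo-effective threshold of $H$ in the direction $E_r$.

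\textbf{Splitting the interval.} Since $\nu$ is not minimal, Lemma~\ref{supra} supplies a unique supra-minimal curve $C$ with equation $f$; its strict transform $\tilde{C}$ on $X$ has negative self-intersection and satisfies $\tilde{C}\cdot H = \deg(f)$, while $\tilde{C}\cdot E_r$ is readable from the maximal contact values. Computing $(H-tE_r)\cdot\tilde{C}$ one finds that it vanishes precisely at
\[
t^{\ast} \;=\; \frac{\betabarra_{g+1}(\nu_r)}{\hat{\mu}(\nu_r)}
\]
and becomes negative thereafter. This partitions $[0,\hat{\mu}(\nu_r)]$ into two regimes. On $[0,t^{\ast}]$ the negative part $N_t$ is supported only on the exceptional configuration, so $\alpha,\beta$ are obtained from the same pattern as in the minimal case of Theorem~\ref{nobminimal}, yielding the edges adjacent to the origin. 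On $[t^{\ast},\hat{\mu}(\nu_r)]$, the curve $\tilde{C}$ enters $N_t$ with linearly growing coefficient, $P_t\cdot E_r$ decreases to $0$ as $t\to\hat{\mu}(\nu_r)$, and the terminal vertical slice collapses to a single point.

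\textbf{Vertex identification.} The four vertices fall out of evaluating $\alpha$ and $\beta$ at the three critical abscissae. At $t=0$ the slice is the origin, since $P_0=H$ and $H\cdot E_r=0$. At $t=t^{\ast}$ the slice has length $P_{t^{\ast}}\cdot E_r=1/\hat{\mu}(\nu_r)$, and the ordinate of its lower vertex $Q_1$ is controlled by $N_{t^{\ast}}|_{E_r}$ at $q$: it is $0$ when $q$ lies only on $E_r$, and $\nu_r(\varphi_\eta)/\hat{\mu}(\nu_r)$ when $q\in E_r\cap E_\eta$, the surplus being the contribution at $q$ of the component $E_\eta$. At $t=\hat{\mu}(\nu_r)$ the slice reduces to $Q_3=(\hat{\mu}(\nu_r),c)$, with $c$ being the second coordinate of $\nu(f)/\deg(f)$ after identifying the limiting shape of $N_t|_{E_r}$ via $\tilde{C}$. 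The body $\Delta_\nu$ is then the convex hull of $(0,0),Q_1,Q_2,Q_3$, which is generically a quadrilateral but degenerates to a triangle when three of these vertices are collinear (for instance when $Q_2$ lies on the segment from the origin to $Q_3$).

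\textbf{Main obstacle.} The delicate step is pinning down the transition parameter $t^{\ast}$ together with the linear growth rate of the $\tilde{C}$-coefficient in $N_t$ on the supra-minimal regime. Both rely on combining the proximity inequalities among the infinitely near points defining $\nu_r$, the intersection matrix of the exceptional configuration, and the identification of $\betabarra_{g+1}(\nu_r)$ via maximal contact values; the second of these also underpins the explicit value $c$ of the terminal slice. Once this Zariski-decomposition bookkeeping is in hand, the satellite-versus-free dichotomy enters only through the constant term $\alpha(t^{\ast})$, so the overall argument mirrors Theorem~A while augmenting the purely exceptional negative part by $\tilde{C}$.
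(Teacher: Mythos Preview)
Your setup via the Lazarsfeld--Musta\c{t}\u{a} slicing is the same as the paper's, and the threshold abscissa $t^\ast=\betabarra_{g+1}(\nu_r)/\hat{\mu}(\nu_r)$ is indeed the right value. However, two issues stand out.

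First, your justification for $t^\ast$ is incorrect: the quantity $(H-tE_r)\cdot\tilde{C}$ equals $\deg(f)-t\,\mathrm{mult}_{p_r}(f)$, and there is no reason for this to vanish at $t^\ast$. In fact the supraminimal curve need not pass through $p_r$ at all (the paper's Example~3 has $\mathrm{mult}_{p_r}(f)=0$), so this intersection may be a positive constant in $t$. What actually vanishes at $t^\ast$ is $P_{t^\ast}\cdot\tilde{C}$, where $P_t$ is the positive part of the Zariski decomposition; but to know $P_{t^\ast}$ you first have to exhibit the decomposition, which is what the paper does directly.

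Second, and more seriously, you correctly identify your ``main obstacle''---the Zariski decomposition on the interval $[t^\ast,\hat{\mu}(\nu_r)]$---but you do not resolve it, and the paper shows it can be sidestepped entirely. The paper's argument is much shorter than your outline suggests: it computes the Zariski decomposition only at the \emph{single} value $t_0=t^\ast$, thereby placing $Q_1$ and $Q_2$ in $\Delta_\nu$; the point $Q_3=\nu(f)/\deg(f)$ lies in $\Delta_\nu$ tautologically, since $f$ is a section; and $(0,0)$ is the slice at $t=0$. The punchline is an area computation: the convex hull of $\{(0,0),Q_1,Q_2,Q_3\}$ has Euclidean area $1/2$, which equals the area of $\Delta_\nu$ by the volume formula, so the convex hull \emph{is} $\Delta_\nu$. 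No analysis of the second Zariski chamber, and no determination of the coefficient of $\tilde{C}$ in $N_t$, is needed.
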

Above, $\varphi_{\eta}$ is a general analytically irreducible element in $\mathcal{O}_{\mathbb{P}^2,p}$ whose strict transform on $X$ is transversal to the exceptional divisor $E_{\eta}$ (see Definition \ref{curveta}).

Recall that the number of vertices of the Newton-Okounkov body defined by a flag and a big divisor on a surface $X$ is bounded by $2 \varrho +2$, where $\varrho$ is the Picard number of $X$ \cite{KLM}. This is a consequence of the fact that a ray of the form $D - t C$, where $D$ is a big divisor and $C$ a curve on $X$, can only cross $\rho + 1$ Zariski chambers. It is conjectured in \cite{kl} (see also \cite{crm}) that the bound could be applied even if the flag is considered on a projective model dominating $X$ (and the Newton-Okounkov body associated to the pull-back of a big divisor on $X$). Our results can be regarded as new evidence supporting the conjecture.

Our second main goal is presented in Theorem \ref{triangulo}, where we characterize which Newton-Okounkov bodies are triangular (and, by exclusion, quadrangular) in the case in which $\nu$ is not minimal. We prove that the fact of being a triangle or a quadrilateral depends only on the branches of the supraminimal curve $C$ of $\nu_r$. Roughly speaking, the divisor $E_r$ gives a partition of the dual graph of $\nu$ into two connected components, one of them being finite and the other one infinite. {\it The Newton-Okounkov body $\Delta_{\nu}$ will be a triangle when all the branches of $C$ go through the same component, and a quadrilateral otherwise}. In particular, we prove that the Newton-Okounkov bodies of valuations $\nu$ with supraminimal curves having only one branch at $p$ are triangles.

The values $\hat{\mu} (\nu_r)$ and the supraminimal curves are not easy to compute. An interesting class of divisorial valuations are the so-called non-positive at infinity ones (see the beginning of Section \ref{NPI}). As one can see in \cite{GM}, they define surfaces with regular cone of curves and, for these surfaces, one can decide (by checking a condition) when their Cox rings are finitely generated. In Corollary \ref{nonpos} we prove that {\it the Newton-Okounkov bodies $\Delta_{\nu}$ of the valuations $\nu$ in  this large class are triangles and we give explicitly their vertices.} Although we deduce this result from Theorem B, we also provide in Remark \ref{la53} the Zariski decomposition of those divisors that, according to Theorem 6.4 of \cite{LM}, would allow us to compute $\Delta_\nu$.

The last section of this paper contains three examples, in which Newton-Okounkov bodies of flags of different nature are computed; we hope this will help the readers in a better understanding of our results.

\section{Divisorial and exceptional curve valuations}

Thereafter we will see that the valuations defined by  those flags we are interested in are exceptional curve valuations and, also, that their first components are divisorial valuations. So we devote this section to study these classes of valuations, providing results which will be useful.

Let $\mathbb{P}^2$ be the complex projective plane. For $p \in \mathbb{P}^2$, let $R$ be the local ring of $\mathbb{P}^2$ at $p$, and let $\mathfrak{m}$ be its maximal ideal; moreover, write $F$ for the field of fractions of $R$. A {\em valuation} of $F$ is an onto mapping $\nu : F^* (:= F \setminus \{0\}) \rightarrow G$, where $G$ is a totally ordered group such that
$\nu (f + h) \geq \min \{\nu(f), \nu (h) \}$ and $\nu (fh) =\nu (f) + \nu (h)$, for $f, h \in F^*$. The group $G$ is called the value group of $\nu$. The subring $R_{\nu}:=\{ f \in F^* \, | \, \nu(f) \geq 0\} \cup \{0\}$ of $F$ is called the {\em valuation ring of $\nu$}. The ring $R_{\nu}$ is local with maximal ideal $\mathfrak{m}_{\nu}:=\{ f \in F^* \, | \, \nu(f) > 0\} \cup \{0\}$. The valuations we consider are {\em centered at $R$}, that is, such that $R \subseteq R_{\nu}$ and $R \cap \mathfrak{m}_{\nu} = \mathfrak{m}$. Finally, the monoid $\nu(R \setminus \{0\})$ is called the {\em semigroup} of the valuation $\nu$ (relative to $R$). Most of the results in this section are also true when using an algebraically closed field instead of the complex numbers \cite{campillo, deganu}.

The valuations $\nu$ of $F$ centered at $R$ are in one-to-one correspondence with simple sequences of blowing-ups of points whose first center is $p$:
\begin{equation}\label{seq}
\pi : \;\;\; \cdots \longrightarrow X_n \longrightarrow X_{n-1} \longrightarrow \cdots \longrightarrow X_1 \longrightarrow X_0=\mathbb{P}^2.
\end{equation}
Here simple means that we only blow-up points $p_i$, $i >1$, belonging to the exceptional divisor created last. The \emph{cluster of centers} of $\pi$ will be denoted by  $\mathcal{C}_{\nu}=\{p=p_1, p_2,  \ldots\}$ and for  $i > j$ we say that a point $p_i$ is proximate to $p_j$, written $p_i \rightarrow p_j$, whenever $p_i$ belongs to the strict transform of the exceptional divisor $E_j$ obtained by blowing-up $p_j$. For all $i\geq 1$ we denote by $E_i$ the exceptional divisor on $X_i$ obtained by blowing-up $p_i$, and we say that $p_i$ and $E_i$ are \emph{satellite} if $p_i$ is proximate to two points of $\mathcal{C}_{\nu}$; otherwise we say that they are \emph{free}. For $i \geq 1$, we will write $R_i = \mathcal{O}_{X_i,p_{i+1}}$.

The above mentioned valuations (considered up to equivalence) were classified in five types by Spivakovsky \cite{spiv}. We are interested in one of them, namely the exceptional curve valuations---in the terminology of \cite{FJ}---, which correspond to Case 3 in \cite{spiv} and to those of type C of \cite{deganu}. Divisorial valuations are another remarkable type in Spivakovsky's classification. They correspond to finite sequences $\pi$ and will be merely instrumental in our development.

More specifically, $\nu$ is {\it divisorial} if it is defined by the order of vanishing along an exceptional divisor $E_n$ obtained from a finite simple sequence of blowing-ups $\pi$. In this case ${\mathcal C}_{\nu}=\{p_i\}_{i=1}^n$, $G \cong \mathbb{Z}$ and, when its value group $G$ is included in $\mathbb{R}$, there exists $0 \neq c \in \mathbb{R}$ such that $\nu(f)=c\cdot {\rm ord}_{E_n}(f)$ for all $f\in F^*$. Divisorial valuations associated with the same divisor $E_n$ but with different constants $c$ are said to be \emph{equivalent} \cite{z-s}. Throughout this paper, the representatives we choose for divisorial valuations will correspond to $c=1$.

A valuation $\nu$ is \emph{exceptional curve} if ${\mathcal C}_{\nu}$ is infinite and there exists a point $p_{r} \in \mathcal{C}_{\nu}$ such that $p_i \rightarrow p_{r}$ for all $i >r$. In this case the group $G$ is non-archimedian, so $G\cong \mathbb{Z}^2$ (with lexicographical ordering).

\subsection{Dual graphs and Enriques diagrams of divisorial and exceptional curve valuations}
\label{32}

For any non-empty finite set ${\mathcal W}$ of effective divisors on a non-singular surface, we define its \emph{dual graph} $\Gamma_{\mathcal W}$ as the graph whose set of vertices is in 1-1 correspondence with $\mathcal W$, and two vertices are joined by an edge if and only if the associated divisors intersect. The dual graph $\Gamma_{\nu}$ of  a divisorial valuation $\nu$, with associated configuration ${\mathcal C}_{\nu}=\{p_i\}_{i=1}^n$, is the dual graph of the set of divisors $\{E_1, E_2, \ldots, E_n\}$ together with a label $i$ attached to the vertex corresponding to $E_i$;  the symbols $E_i$ stand for the strict transforms of the divisors $E_i$ on $X_n$; this dual notation (for divisors and their strict transforms) will be used sometimes throughout  this paper. The graph $\Gamma_{\nu}$ is a rooted tree whose root is the vertex that is labeled with $\mathbf{1}$.

In Figure \ref{fig2} we have depicted the dual graph of a divisorial valuation $\nu$. Here we have shown only some labels. The vertices different from $\mathbf{1}$ which are adjacent to a unique vertex are called \emph{dead ends} (labeled as $\ell_1, \ell_2, \ldots, \ell_g$ in Figure \ref{fig2}), and those adjacent to three vertices are called \emph{star vertices}. We have labeled the star vertices with $st_1, st_2, \ldots, st_g$. If the divisor defining $\nu$ is free, then there is a finite sequence of vertices corresponding to free divisors which appears  after $st_g$ (the \emph{tail}, in Figure \ref{fig2}) and the last vertex is a dead end. Otherwise this tail does not appear and $st_g$ is only adjacent to two vertices.

\begin{figure}[h]
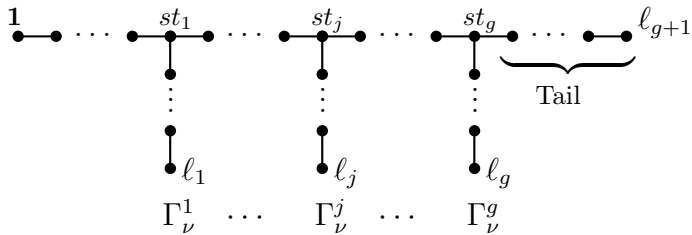


\begin{center}
\setlength{\unitlength}{0.5cm}%
\begin{Picture}(0,0)(20,8)
\thicklines


\xLINE(0,6)(1,6)
\Put(0,6){\circle*{0.3}}
\Put(1,6){\circle*{0.3}}
\put(1,6){$\;\;\ldots\;\;$}
\xLINE(3,6)(4,6)
\Put(3,6){\circle*{0.3}}
\Put(4,6){\circle*{0.3}}
\xLINE(4,6)(4,5)
\Put(4,5){\circle*{0.3}}
\Put(3.9,4){$\vdots$}
\xLINE(4,3.5)(4,2.5)
\Put(4,3.5){\circle*{0.3}}
\Put(4,2.5){\circle*{0.3}}
\Put(3.8,1){$\Gamma_{\nu}^1$}

\put(4.3,2.2){$\ell_1$}

\Put(3.7,6.3){\footnotesize $st_1$}
\Put(-0.3,6.3){\footnotesize $\mathbf{1}$}



\xLINE(4,6)(5,6)
\Put(4,6){\circle*{0.3}}
\Put(5,6){\circle*{0.3}}
\Put(5,6){$\;\;\ldots\;\;$}
\xLINE(7,6)(8,6)
\Put(7,6){\circle*{0.3}}
\Put(8,6){\circle*{0.3}}
\xLINE(8,6)(8,5)
\Put(8,5){\circle*{0.3}}
\Put(7.9,4){$\vdots$}
\xLINE(8,3.5)(8,2.5)
\Put(8,3.5){\circle*{0.3}}
\Put(8,2.5){\circle*{0.3}}
\Put(7.8,1){$\Gamma_{\nu}^j$}

\put(8.3,2.2){$\ell_j$}

\Put(7.7,6.3){\footnotesize $st_j$}

\Put(5,1){$\;\;\cdots\;\;$}


\xLINE(8,6)(9,6)
\Put(8,6){\circle*{0.3}}
\Put(9,6){\circle*{0.3}}
\put(9,6){$\;\;\ldots\;\;$}
\xLINE(11,6)(12,6)
\Put(11,6){\circle*{0.3}}
\Put(12,6){\circle*{0.3}}
\xLINE(12,6)(12,5)
\Put(12,5){\circle*{0.3}}
\Put(11.9,4){$\vdots$}
\xLINE(12,3.5)(12,2.5)
\Put(12,3.5){\circle*{0.3}}
\Put(12,2.5){\circle*{0.3}}
\Put(11.8,1){$\Gamma_{\nu}^g$}

\put(12.3,2.2){$\ell_g$}

\Put(11.7,6.3){\footnotesize $st_g$}

\Put(9,1){$\;\;\cdots\;\;$}


\xLINE(12,6)(13,6)
\Put(13,6){\circle*{0.3}}
\Put(13,6){$\;\;\ldots\;\;$}
\Put(15,6){\circle*{0.3}}
\xLINE(15,6)(16,6)
\Put(16,6){\circle*{0.3}}


\Put(12.7,5.5){$\underbrace{\;\;\;\;\;\;\;\;\;\;\;\;\;\;\;}$}
\Put(13.6,4.2){{\footnotesize Tail}}

\Put(16.3,6.2){$\ell_{g+1}$}

\end{Picture}
\end{center}
  \caption{Dual graph of a divisorial valuation}
  \label{fig2}
\end{figure}

Consider also the following ordering on the set of vertices: $\alpha \preccurlyeq \beta$ if the path in the dual graph joining $\mathbf{1}$ and $\beta$ goes through $\alpha$. For each $j=1, 2, \ldots,g$, we denote by $\Gamma_{\nu}^j$  the subgraph given by the vertices $\alpha$ such that $st_{j-1} \preccurlyeq \alpha \preccurlyeq \ell_j$  (where $st_0:=\mathbf{1}$) and the edges joining them.  The number $g$ of subgraphs $\Gamma_{\nu}^j$ will be called the \emph{number of Puiseux pairs} of $\nu$ in the sequel. Moreover, we will say that an exceptional divisor $E_i$ (or the point $p_i$) \emph{belongs to the $j$th Puiseux pair} of $\nu$ if its associated vertex is a vertex of $\Gamma_{\nu}^j$. The vertices of each subgraph $\Gamma_{\nu}^j$ corresponding to free divisors are some of the first consecutive ones (with respect to the ordering $\preccurlyeq$) and the one labeled by $\ell_j$; we call them (respectively, the exceptional divisors $E_i$ and points $p_i$ associated to them) the \emph{free part} of $\Gamma_{\nu}^j$ (respectively, the free part of the \emph{$j$th Puiseux pair} of $\nu$). Notice that, for $1\leq j<g$, the exceptional divisors $E_{st_j}$ belong to two consecutive Puiseux pairs and they are satellite.

The Enriques diagram of the configuration ${\mathcal C}_{\nu}$ (or of $\nu$) \cite[IV.I]{Enriques}, denoted by ${\mathcal E}_{\nu}$, is a rooted tree (with root $O=p_1$) that has a vertex for each point in ${\mathcal C}_{\nu}$ and an edge joining each pair of vertices that represent consecutive points $p_i$ and $p_{i+1}$. These edges are of two different kinds, straight or curved, according to the following rules:

\begin{itemize}
\item[$\diamond$] If $p_{i+1}$ is free then the edge joining $p_i$ and $p_{i+1}$ is smooth, curved and such that, if $p_i\neq O$, it has at $p_i$ the same tangent as the edge ending at $p_i$.
\item[$\diamond$] Assume that the vertices $p_i$ and $p_{i+1}$ (and the edge joining them) have been represented. Then all the infinitely near to $p_{i+1}$ points which are proximate to $p_i$, as well as the edges joining them, appear on a straight half-line which starts at $p_{i+1}$ and is orthogonal to the edge $p_i p_{i+1}$ at $p_{i+1}$. To avoid self-intersections of the diagram, such half-lines are alternately oriented to the right and to the left of the preceding edge.
\end{itemize}

Figure \ref{figura3} shows the Enriques diagram of a divisorial valuation. We have labeled the vertices associated with the points $p_{st_j}$ and $p_{\ell_j}$ for all $0\leq j\leq g$. It consists of the concatenation of $g$ subgraphs corresponding to the points in the $g$ Puiseux pairs of $\nu$ (see Figure \ref{fig4}) and, if the valuation is defined by a free divisor (as in Figure \ref{figura3}), a sequence of vertices corresponding to free points which appear after $p_{st_g}$ (the tail).

\begin{figure}
\begin{center}

\newcommand{\norma}[2]{%
\SQUARE{#1}{\m}
\SQUARE{#2}{\n}
\ADD{\m}{\n}{\k}
\SQUAREROOT{\k}{\norm}
}


\newcommand{\arco}[6]{%
\norma{#1}{#2}
\DIVIDE{#2}{\norm}{\tangentey}
\DIVIDE{#1}{\norm}{\tangentex}
\COPY{\tangentey}{\ortogonalx}
\COPY{\tangentey}{\centrox}
\SUBTRACT{0}{\tangentex}{\ortogonaly}
\MULTIPLY{\centrox}{#6}{\centrox}
\MULTIPLY{\ortogonaly}{#6}{\centroy}
\ADD{#3}{\centrox}{\centrox}
\ADD{#4}{\centroy}{\centroy}
\changereferencesystem(\centrox,\centroy)(\ortogonalx,\ortogonaly)(\tangentex,\tangentey)
\radiansangles
\SUBTRACT{\numberPI}{#5}{\angulo}
\ellipticArc{#6}{#6}{\angulo}{\numberPI}
\COS{\angulo}{\puntox}
\SIN{\angulo}{\puntoy}
\MULTIPLY{#6}{\puntox}{\puntox}
\MULTIPLY{#6}{\puntoy}{\puntoy}
\Put(\puntox,\puntoy){\circle*{0.08}}
\SUBTRACT{0}{\puntox}{\direcciony}
\COPY{\puntoy}{\direccionx}
\norma{\direccionx}{\direcciony}
\DIVIDE{\direccionx}{\norm}{\direccionx}
\DIVIDE{\direcciony}{\norm}{\direcciony}
\COPY{\direcciony}{\ortogonalx}
\COPY{\direccionx}{\ortogonaly}
\SUBTRACT{0}{\ortogonaly}{\ortogonaly}
\changereferencesystem(\puntox,\puntoy)(\ortogonalx,\ortogonaly)(\direccionx,\direcciony)
}

\newcommand{\arcosinpunto}[6]{%
\norma{#1}{#2}
\DIVIDE{#2}{\norm}{\tangentey}
\DIVIDE{#1}{\norm}{\tangentex}
\COPY{\tangentey}{\ortogonalx}
\COPY{\tangentey}{\centrox}
\SUBTRACT{0}{\tangentex}{\ortogonaly}
\MULTIPLY{\centrox}{#6}{\centrox}
\MULTIPLY{\ortogonaly}{#6}{\centroy}
\ADD{#3}{\centrox}{\centrox}
\ADD{#4}{\centroy}{\centroy}
\changereferencesystem(\centrox,\centroy)(\ortogonalx,\ortogonaly)(\tangentex,\tangentey)
\radiansangles
\SUBTRACT{\numberPI}{#5}{\angulo}
\ellipticArc{#6}{#6}{\angulo}{\numberPI}
\COS{\angulo}{\puntox}
\SIN{\angulo}{\puntoy}
\MULTIPLY{#6}{\puntox}{\puntox}
\MULTIPLY{#6}{\puntoy}{\puntoy}
\SUBTRACT{0}{\puntox}{\direcciony}
\COPY{\puntoy}{\direccionx}
\norma{\direccionx}{\direcciony}
\DIVIDE{\direccionx}{\norm}{\direccionx}
\DIVIDE{\direcciony}{\norm}{\direcciony}
\COPY{\direcciony}{\ortogonalx}
\COPY{\direccionx}{\ortogonaly}
\SUBTRACT{0}{\ortogonaly}{\ortogonaly}
\changereferencesystem(\puntox,\puntoy)(\ortogonalx,\ortogonaly)(\direccionx,\direcciony)
}

\newcommand{\arcoblanco}[6]{%
\norma{#1}{#2}
\DIVIDE{#2}{\norm}{\tangentey}
\DIVIDE{#1}{\norm}{\tangentex}
\COPY{\tangentey}{\ortogonalx}
\COPY{\tangentey}{\centrox}
\SUBTRACT{0}{\tangentex}{\ortogonaly}
\MULTIPLY{\centrox}{#6}{\centrox}
\MULTIPLY{\ortogonaly}{#6}{\centroy}
\ADD{#3}{\centrox}{\centrox}
\ADD{#4}{\centroy}{\centroy}
\changereferencesystem(\centrox,\centroy)(\ortogonalx,\ortogonaly)(\tangentex,\tangentey)
\radiansangles
\SUBTRACT{\numberPI}{#5}{\angulo}
\COS{\angulo}{\puntox}
\SIN{\angulo}{\puntoy}
\MULTIPLY{#6}{\puntox}{\puntox}
\MULTIPLY{#6}{\puntoy}{\puntoy}
\Put(\puntox,\puntoy){\circle*{0.04}}
\SUBTRACT{0}{\puntox}{\direcciony}
\COPY{\puntoy}{\direccionx}
\norma{\direccionx}{\direcciony}
\DIVIDE{\direccionx}{\norm}{\direccionx}
\DIVIDE{\direcciony}{\norm}{\direcciony}
\COPY{\direcciony}{\ortogonalx}
\COPY{\direccionx}{\ortogonaly}
\SUBTRACT{0}{\ortogonaly}{\ortogonaly}
\changereferencesystem(\puntox,\puntoy)(\ortogonalx,\ortogonaly)(\direccionx,\direcciony)
}

\newcommand{\arcoblancosinpunto}[6]{%
\norma{#1}{#2}
\DIVIDE{#2}{\norm}{\tangentey}
\DIVIDE{#1}{\norm}{\tangentex}
\COPY{\tangentey}{\ortogonalx}
\COPY{\tangentey}{\centrox}
\SUBTRACT{0}{\tangentex}{\ortogonaly}
\MULTIPLY{\centrox}{#6}{\centrox}
\MULTIPLY{\ortogonaly}{#6}{\centroy}
\ADD{#3}{\centrox}{\centrox}
\ADD{#4}{\centroy}{\centroy}
\changereferencesystem(\centrox,\centroy)(\ortogonalx,\ortogonaly)(\tangentex,\tangentey)
\radiansangles
\SUBTRACT{\numberPI}{#5}{\angulo}
\COS{\angulo}{\puntox}
\SIN{\angulo}{\puntoy}
\MULTIPLY{#6}{\puntox}{\puntox}
\MULTIPLY{#6}{\puntoy}{\puntoy}
\SUBTRACT{0}{\puntox}{\direcciony}
\COPY{\puntoy}{\direccionx}
\norma{\direccionx}{\direcciony}
\DIVIDE{\direccionx}{\norm}{\direccionx}
\DIVIDE{\direcciony}{\norm}{\direcciony}
\COPY{\direcciony}{\ortogonalx}
\COPY{\direccionx}{\ortogonaly}
\SUBTRACT{0}{\ortogonaly}{\ortogonaly}
\changereferencesystem(\puntox,\puntoy)(\ortogonalx,\ortogonaly)(\direccionx,\direcciony)
}

\newcommand{\segmento}[4]{%
\MULTIPLY{#3}{#4}{\long}
\MULTIPLY{#1}{\long}{\dirx}
\MULTIPLY{#2}{\long}{\diry}
\xLINE(0,0)(\dirx,\diry)
\Put(\dirx,\diry){\circle*{0.08}}
\changereferencesystem(\dirx,\diry)(#2,-#1)(#1,#2)
}

\newcommand{\segmentosinpunto}[4]{%
\MULTIPLY{#3}{#4}{\long}
\MULTIPLY{#1}{\long}{\dirx}
\MULTIPLY{#2}{\long}{\diry}
\xLINE(0,0)(\dirx,\diry)
\changereferencesystem(\dirx,\diry)(#2,-#1)(#1,#2)
}

\newcommand{\segmentoblanco}[4]{%
\MULTIPLY{#3}{#4}{\long}
\MULTIPLY{#1}{\long}{\dirx}
\MULTIPLY{#2}{\long}{\diry}
\Put(\dirx,\diry){\circle*{0.04}}
\changereferencesystem(\dirx,\diry)(#2,-#1)(#1,#2)
}

\newcommand{\segmentoblancosinpunto}[4]{%
\MULTIPLY{#3}{#4}{\long}
\MULTIPLY{#1}{\long}{\dirx}
\MULTIPLY{#2}{\long}{\diry}
\changereferencesystem(\dirx,\diry)(#2,-#1)(#1,#2)
}

\setlength{\unitlength}{1cm}%

\begin{Picture}(-1,-1)(11,3)
\thicklines

\referencesystem(0,0)(1,1)(-1,1)

\COPY{1}{\rad}
\COPY{0.4}{\ang}
\DIVIDE{\ang}{2}{\angbis}
\DIVIDE{\ang}{3}{\angbisbis}

\COPY{0}{\taux}
\COPY{1}{\tauy}
\COPY{0}{\pux}
\COPY{0}{\puy}
\Put(\pux,\puy){\circle*{0.08}}
\Put(0.1,\puy){\tiny{$\mathbf{1}=st_0$}}

\arco{\taux}{\tauy}{\pux}{\puy}{\ang}{\rad}

\arcosinpunto{0}{1}{0}{0}{\angbis}{\rad}
\arcoblanco{0}{1}{0}{0}{\angbis}{\rad}
\arcoblanco{0}{1}{0}{0}{\angbis}{\rad}
\arcoblanco{0}{1}{0}{0}{\angbis}{\rad}
\arcoblancosinpunto{0}{1}{0}{0}{\angbis}{\rad}
\arco{0}{1}{0}{0}{\angbis}{\rad}
\Put(0,0.1){\tiny{$\ell_1$}}

\segmento{1}{0}{\ang}{\rad}
\segmentosinpunto{0}{1}{\angbis}{\rad}
\segmentoblanco{0}{1}{\angbisbis}{\rad}
\segmentoblanco{0}{1}{\angbisbis}{\rad}
\segmentoblanco{0}{1}{\angbisbis}{\rad}
\segmentoblancosinpunto{0}{1}{\angbisbis}{\rad}
\segmento{0}{1}{\angbis}{\rad}

\segmento{-1}{0}{\ang}{\rad}
\segmentosinpunto{0}{1}{\angbis}{\rad}
\segmentoblanco{0}{1}{\angbisbis}{\rad}
\segmentoblanco{0}{1}{\angbisbis}{\rad}
\segmentoblanco{0}{1}{\angbisbis}{\rad}
\segmentoblancosinpunto{0}{1}{\angbisbis}{\rad}
\segmento{0}{1}{\angbis}{\rad}

\segmento{1}{0}{\ang}{\rad}
\segmentosinpunto{0}{1}{\angbis}{\rad}

\Put(-0.1,0.1){\circle*{0.04}}
\Put(-0.2,0.2){\circle*{0.04}}
\Put(-0.3,0.3){\circle*{0.04}}
\changereferencesystem(-0.4,0.4)(1,0)(0,1)

\segmento{0}{1}{\angbis}{\rad}
\segmentosinpunto{-1}{0}{\angbis}{\rad}

\segmentoblanco{0}{1}{\angbisbis}{\rad}
\segmentoblanco{0}{1}{\angbisbis}{\rad}
\segmentoblanco{0}{1}{\angbisbis}{\rad}
\segmentoblancosinpunto{0}{1}{\angbisbis}{\rad}
\segmento{0}{1}{\angbis}{\rad}

\Put(-0.3,-0.1){\tiny{$st_1$}}

\arco{0}{1}{0}{0}{\ang}{\rad}
\arcosinpunto{0}{1}{0}{0}{\angbis}{\rad}
\arcoblanco{0}{1}{0}{0}{\angbis}{\rad}
\arcoblanco{0}{1}{0}{0}{\angbis}{\rad}
\arcoblanco{0}{1}{0}{0}{\angbis}{\rad}
\arcoblancosinpunto{0}{1}{0}{0}{\angbis}{\rad}
\arco{0}{1}{0}{0}{\angbis}{\rad}

\Put(-0.1,0.1){\tiny{${\ell}_2$}}

\segmentosinpunto{1}{0}{\angbis}{\rad}
\Put(-0.2,0){\circle*{0.05}}
\Put(-0.4,0){\circle*{0.05}}
\Put(-0.6,0){\circle*{0.05}}

\changereferencesystem(-0.8,0)(1,0)(0,1)
\segmento{-1}{0}{\angbis}{\rad}


\changereferencesystem(0,0)(-1,0)(0,1)

\Put(-0.3,-0.1){\tiny{$st_g$}}
\arco{0}{1}{0}{0}{\angbis}{\rad}

\arcosinpunto{0}{1}{0}{0}{\angbis}{\rad}
\arcoblanco{0}{1}{0}{0}{\angbis}{\rad}
\arcoblanco{0}{1}{0}{0}{\angbis}{\rad}
\arcoblanco{0}{1}{0}{0}{\angbis}{\rad}
\arcoblancosinpunto{0}{1}{0}{0}{\angbis}{\rad}
\arco{0}{1}{0}{0}{\angbis}{\rad}

\end{Picture}
\end{center}

\caption{Enriques diagram of a divisorial valuation}
  \label{figura3}
\end{figure}


\begin{figure}
\begin{center}

\newcommand{\norma}[2]{%
\SQUARE{#1}{\m}
\SQUARE{#2}{\n}
\ADD{\m}{\n}{\k}
\SQUAREROOT{\k}{\norm}
}


\newcommand{\arco}[6]{%
\norma{#1}{#2}
\DIVIDE{#2}{\norm}{\tangentey}
\DIVIDE{#1}{\norm}{\tangentex}
\COPY{\tangentey}{\ortogonalx}
\COPY{\tangentey}{\centrox}
\SUBTRACT{0}{\tangentex}{\ortogonaly}
\MULTIPLY{\centrox}{#6}{\centrox}
\MULTIPLY{\ortogonaly}{#6}{\centroy}
\ADD{#3}{\centrox}{\centrox}
\ADD{#4}{\centroy}{\centroy}
\changereferencesystem(\centrox,\centroy)(\ortogonalx,\ortogonaly)(\tangentex,\tangentey)
\radiansangles
\SUBTRACT{\numberPI}{#5}{\angulo}
\ellipticArc{#6}{#6}{\angulo}{\numberPI}
\COS{\angulo}{\puntox}
\SIN{\angulo}{\puntoy}
\MULTIPLY{#6}{\puntox}{\puntox}
\MULTIPLY{#6}{\puntoy}{\puntoy}
\Put(\puntox,\puntoy){\circle*{0.08}}
\SUBTRACT{0}{\puntox}{\direcciony}
\COPY{\puntoy}{\direccionx}
\norma{\direccionx}{\direcciony}
\DIVIDE{\direccionx}{\norm}{\direccionx}
\DIVIDE{\direcciony}{\norm}{\direcciony}
\COPY{\direcciony}{\ortogonalx}
\COPY{\direccionx}{\ortogonaly}
\SUBTRACT{0}{\ortogonaly}{\ortogonaly}
\changereferencesystem(\puntox,\puntoy)(\ortogonalx,\ortogonaly)(\direccionx,\direcciony)
}

\newcommand{\arcosinpunto}[6]{%
\norma{#1}{#2}
\DIVIDE{#2}{\norm}{\tangentey}
\DIVIDE{#1}{\norm}{\tangentex}
\COPY{\tangentey}{\ortogonalx}
\COPY{\tangentey}{\centrox}
\SUBTRACT{0}{\tangentex}{\ortogonaly}
\MULTIPLY{\centrox}{#6}{\centrox}
\MULTIPLY{\ortogonaly}{#6}{\centroy}
\ADD{#3}{\centrox}{\centrox}
\ADD{#4}{\centroy}{\centroy}
\changereferencesystem(\centrox,\centroy)(\ortogonalx,\ortogonaly)(\tangentex,\tangentey)
\radiansangles
\SUBTRACT{\numberPI}{#5}{\angulo}
\ellipticArc{#6}{#6}{\angulo}{\numberPI}
\COS{\angulo}{\puntox}
\SIN{\angulo}{\puntoy}
\MULTIPLY{#6}{\puntox}{\puntox}
\MULTIPLY{#6}{\puntoy}{\puntoy}
\SUBTRACT{0}{\puntox}{\direcciony}
\COPY{\puntoy}{\direccionx}
\norma{\direccionx}{\direcciony}
\DIVIDE{\direccionx}{\norm}{\direccionx}
\DIVIDE{\direcciony}{\norm}{\direcciony}
\COPY{\direcciony}{\ortogonalx}
\COPY{\direccionx}{\ortogonaly}
\SUBTRACT{0}{\ortogonaly}{\ortogonaly}
\changereferencesystem(\puntox,\puntoy)(\ortogonalx,\ortogonaly)(\direccionx,\direcciony)
}

\newcommand{\arcoblanco}[6]{%
\norma{#1}{#2}
\DIVIDE{#2}{\norm}{\tangentey}
\DIVIDE{#1}{\norm}{\tangentex}
\COPY{\tangentey}{\ortogonalx}
\COPY{\tangentey}{\centrox}
\SUBTRACT{0}{\tangentex}{\ortogonaly}
\MULTIPLY{\centrox}{#6}{\centrox}
\MULTIPLY{\ortogonaly}{#6}{\centroy}
\ADD{#3}{\centrox}{\centrox}
\ADD{#4}{\centroy}{\centroy}
\changereferencesystem(\centrox,\centroy)(\ortogonalx,\ortogonaly)(\tangentex,\tangentey)
\radiansangles
\SUBTRACT{\numberPI}{#5}{\angulo}
\COS{\angulo}{\puntox}
\SIN{\angulo}{\puntoy}
\MULTIPLY{#6}{\puntox}{\puntox}
\MULTIPLY{#6}{\puntoy}{\puntoy}
\Put(\puntox,\puntoy){\circle*{0.04}}
\SUBTRACT{0}{\puntox}{\direcciony}
\COPY{\puntoy}{\direccionx}
\norma{\direccionx}{\direcciony}
\DIVIDE{\direccionx}{\norm}{\direccionx}
\DIVIDE{\direcciony}{\norm}{\direcciony}
\COPY{\direcciony}{\ortogonalx}
\COPY{\direccionx}{\ortogonaly}
\SUBTRACT{0}{\ortogonaly}{\ortogonaly}
\changereferencesystem(\puntox,\puntoy)(\ortogonalx,\ortogonaly)(\direccionx,\direcciony)
}

\newcommand{\arcoblancosinpunto}[6]{%
\norma{#1}{#2}
\DIVIDE{#2}{\norm}{\tangentey}
\DIVIDE{#1}{\norm}{\tangentex}
\COPY{\tangentey}{\ortogonalx}
\COPY{\tangentey}{\centrox}
\SUBTRACT{0}{\tangentex}{\ortogonaly}
\MULTIPLY{\centrox}{#6}{\centrox}
\MULTIPLY{\ortogonaly}{#6}{\centroy}
\ADD{#3}{\centrox}{\centrox}
\ADD{#4}{\centroy}{\centroy}
\changereferencesystem(\centrox,\centroy)(\ortogonalx,\ortogonaly)(\tangentex,\tangentey)
\radiansangles
\SUBTRACT{\numberPI}{#5}{\angulo}
\COS{\angulo}{\puntox}
\SIN{\angulo}{\puntoy}
\MULTIPLY{#6}{\puntox}{\puntox}
\MULTIPLY{#6}{\puntoy}{\puntoy}
\SUBTRACT{0}{\puntox}{\direcciony}
\COPY{\puntoy}{\direccionx}
\norma{\direccionx}{\direcciony}
\DIVIDE{\direccionx}{\norm}{\direccionx}
\DIVIDE{\direcciony}{\norm}{\direcciony}
\COPY{\direcciony}{\ortogonalx}
\COPY{\direccionx}{\ortogonaly}
\SUBTRACT{0}{\ortogonaly}{\ortogonaly}
\changereferencesystem(\puntox,\puntoy)(\ortogonalx,\ortogonaly)(\direccionx,\direcciony)
}

\newcommand{\segmento}[4]{%
\MULTIPLY{#3}{#4}{\long}
\MULTIPLY{#1}{\long}{\dirx}
\MULTIPLY{#2}{\long}{\diry}
\xLINE(0,0)(\dirx,\diry)
\Put(\dirx,\diry){\circle*{0.08}}
\changereferencesystem(\dirx,\diry)(#2,-#1)(#1,#2)
}

\newcommand{\segmentosinpunto}[4]{%
\MULTIPLY{#3}{#4}{\long}
\MULTIPLY{#1}{\long}{\dirx}
\MULTIPLY{#2}{\long}{\diry}
\xLINE(0,0)(\dirx,\diry)
\changereferencesystem(\dirx,\diry)(#2,-#1)(#1,#2)
}

\newcommand{\segmentoblanco}[4]{%
\MULTIPLY{#3}{#4}{\long}
\MULTIPLY{#1}{\long}{\dirx}
\MULTIPLY{#2}{\long}{\diry}
\Put(\dirx,\diry){\circle*{0.04}}
\changereferencesystem(\dirx,\diry)(#2,-#1)(#1,#2)
}

\newcommand{\segmentoblancosinpunto}[4]{%
\MULTIPLY{#3}{#4}{\long}
\MULTIPLY{#1}{\long}{\dirx}
\MULTIPLY{#2}{\long}{\diry}
\changereferencesystem(\dirx,\diry)(#2,-#1)(#1,#2)
}

\setlength{\unitlength}{1.5cm}%

\begin{Picture}(-1,-1)(6,3)
\thicklines

\referencesystem(0,0)(1,1)(-1,1)

\COPY{1}{\rad}
\COPY{0.4}{\ang}
\DIVIDE{\ang}{2}{\angbis}
\DIVIDE{\ang}{3}{\angbisbis}

\COPY{0}{\taux}
\COPY{1}{\tauy}
\COPY{0}{\pux}
\COPY{0}{\puy}
\Put(\pux,\puy){\circle*{0.08}}
\Put(0.1,\puy){\tiny{$st_{j-1}$}}

\arco{\taux}{\tauy}{\pux}{\puy}{\ang}{\rad}

\arcosinpunto{0}{1}{0}{0}{\angbis}{\rad}
\arcoblanco{0}{1}{0}{0}{\angbis}{\rad}
\arcoblanco{0}{1}{0}{0}{\angbis}{\rad}
\arcoblanco{0}{1}{0}{0}{\angbis}{\rad}
\arcoblancosinpunto{0}{1}{0}{0}{\angbis}{\rad}
\arco{0}{1}{0}{0}{\angbis}{\rad}
\Put(0,0.1){\tiny{$\ell_j$}}

\segmento{1}{0}{\ang}{\rad}
\segmentosinpunto{0}{1}{\angbis}{\rad}
\segmentoblanco{0}{1}{\angbisbis}{\rad}
\segmentoblanco{0}{1}{\angbisbis}{\rad}
\segmentoblanco{0}{1}{\angbisbis}{\rad}
\segmentoblancosinpunto{0}{1}{\angbisbis}{\rad}
\segmento{0}{1}{\angbis}{\rad}

\segmento{-1}{0}{\ang}{\rad}
\segmentosinpunto{0}{1}{\angbis}{\rad}
\segmentoblanco{0}{1}{\angbisbis}{\rad}
\segmentoblanco{0}{1}{\angbisbis}{\rad}
\segmentoblanco{0}{1}{\angbisbis}{\rad}
\segmentoblancosinpunto{0}{1}{\angbisbis}{\rad}
\segmento{0}{1}{\angbis}{\rad}

\segmento{1}{0}{\ang}{\rad}
\segmentosinpunto{0}{1}{\angbis}{\rad}

\Put(-0.1,0.1){\circle*{0.04}}
\Put(-0.2,0.2){\circle*{0.04}}
\Put(-0.3,0.3){\circle*{0.04}}
\changereferencesystem(-0.4,0.4)(1,0)(0,1)

\segmento{0}{1}{\angbis}{\rad}
\segmentosinpunto{-1}{0}{\angbis}{\rad}

\segmentoblanco{0}{1}{\angbisbis}{\rad}
\segmentoblanco{0}{1}{\angbisbis}{\rad}
\segmentoblanco{0}{1}{\angbisbis}{\rad}
\segmentoblancosinpunto{0}{1}{\angbisbis}{\rad}
\segmento{0}{1}{\angbis}{\rad}

\Put(-0.2,-0.1){\tiny{$st_j$}}

\end{Picture}
\end{center}

\caption{Enriques diagram of the $j$th Puiseux pair of a divisorial valuation}
  \label{fig4}
\end{figure}

For an exceptional curve valuation $\nu$, we consider the sequence of dual graphs (respectively, Enriques diagrams) $\Gamma_{\nu}^{\bullet}=(\Gamma_{\nu_i})_{i\geq 1}$ (respectively, ${\mathcal E}_{\nu}^{\bullet}=({\mathcal E}_{\nu_i})_{i\geq 1}$), where $\nu_i$ is the divisorial valuation defined by the exceptional divisor $E_i$.

Let $r$ be the index such that $p_i\rightarrow p_r$ for all $i> r$. Then, for all $i\geq r+1$, $\Gamma_{\nu_{i+1}}$ is obtained from $\Gamma_{\nu_i}$ by the following procedure (see Sections 5 and 9 of \cite{spiv}): (1) delete the edge joining the vertices of the divisors $E_i$ and $E_r$, (2) add a new vertex (the one associated with $E_{i+1}$), and (3) add two edges joining the new vertex with those corresponding with $E_i$ and $E_r$. Also, for all $i\geq r+1$, the graph ${\mathcal E}_{\nu_{i+1}}$ is obtained from ${\mathcal E}_{\nu_i}$ adding a new vertex (the one corresponding with $p_{i+1}$) and an edge $p_ip_{i+1}$ orthogonal to the edge $p_rp_{r+1}$.

We define the \emph{dual graph} (respectively, the \emph{Enriques diagram}) of $\nu$, denoted also by $\Gamma_{\nu}$ (respectively, ${\mathcal E}_{\nu}$), as the (infinite) graph whose set of vertices is the union of the sets of vertices of $\Gamma_{\nu_i}$ (respectively, ${\mathcal E}_{\nu_i}$) for all $i\geq 1$, with the same labels, and such that two vertices are connected by an edge in $\Gamma_{\nu}$ (respectively, ${\mathcal E}_{\nu}$) if and only if they are connected by an edge in $\Gamma_{\nu_i}$ (respectively, ${\mathcal E}_{\nu_i}$) for all $i$ sufficiently large.

Notice that ${\mathcal E}_{\nu}$ is connected. However $\Gamma_{\nu}$ has two connected components and exactly one of them has infinitely many vertices.
In Figure \ref{fig5} we have depicted the dual graph of an exceptional curve valuation $\nu$. If $g$ stands for the maximum of the number of Puiseux pairs of the divisorial valuations $\nu_i$, $i\geq 1$, then for all $j$ such that $1\leq j\leq g-1$ we denote by $\Gamma_{\nu}^j$ the subgraph of $\Gamma_{\nu}$ such that $\Gamma_{\nu}^j=\Gamma_{\nu_i}^j$ for all $i$ large enough. Also, we denote by $\Gamma_{\nu}^g$ the subgraph of $\Gamma_{\nu}$ whose set of vertices is the union of the sets of vertices of $\Gamma_{\nu_i}^g$ for $i$ large enough and whose edges are those of $\Gamma_{\nu}$ joining the mentioned vertices.
Notice that all subgraphs $\Gamma_{\nu}^j$ are finite except the last one, $\Gamma_{\nu}^g$. As in the case of divisorial valuations, we will say that an exceptional divisor $E_i$ (or the point $p_i$) \emph{belongs to the $j$th Puiseux pair} of $\nu$ if its associated vertex is a vertex of $\Gamma_{\nu}^j$.

\begin{figure}[h]
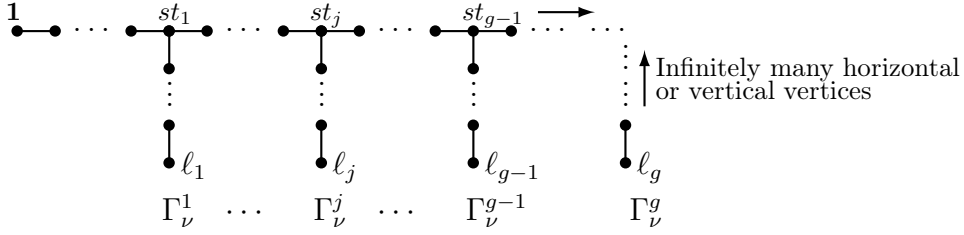


\begin{center}
\setlength{\unitlength}{0.5cm}%
\begin{Picture}(0,0)(24,8)
\thicklines


\xLINE(0,6)(1,6)
\Put(0,6){\circle*{0.3}}
\Put(1,6){\circle*{0.3}}
\put(1,6){$\;\;\ldots\;\;$}
\xLINE(3,6)(4,6)
\Put(3,6){\circle*{0.3}}
\Put(4,6){\circle*{0.3}}
\xLINE(4,6)(4,5)
\Put(4,5){\circle*{0.3}}
\Put(3.9,4){$\vdots$}
\xLINE(4,3.5)(4,2.5)
\Put(4,3.5){\circle*{0.3}}
\Put(4,2.5){\circle*{0.3}}
\Put(3.8,1){$\Gamma^1_\nu$}

\put(4.3,2.2){$\ell_1$}

\Put(3.7,6.3){\footnotesize $st_1$}
\Put(-0.3,6.3){\footnotesize $\mathbf{1}$}



\xLINE(4,6)(5,6)
\Put(4,6){\circle*{0.3}}
\Put(5,6){\circle*{0.3}}
\Put(5,6){$\;\;\ldots\;\;$}
\xLINE(7,6)(8,6)
\Put(7,6){\circle*{0.3}}
\Put(8,6){\circle*{0.3}}
\xLINE(8,6)(8,5)
\Put(8,5){\circle*{0.3}}
\Put(7.9,4){$\vdots$}
\xLINE(8,3.5)(8,2.5)
\Put(8,3.5){\circle*{0.3}}
\Put(8,2.5){\circle*{0.3}}
\Put(7.8,1){$\Gamma^j_\nu$}

\put(8.3,2.2){$\ell_j$}

\Put(7.7,6.3){\footnotesize $st_j$}

\Put(5,1){$\;\;\cdots\;\;$}


\xLINE(8,6)(9,6)
\Put(8,6){\circle*{0.3}}
\Put(9,6){\circle*{0.3}}
\put(9,6){$\;\;\ldots\;\;$}
\xLINE(11,6)(12,6)
\Put(11,6){\circle*{0.3}}
\Put(12,6){\circle*{0.3}}
\xLINE(12,6)(12,5)
\Put(12,5){\circle*{0.3}}
\Put(11.9,4){$\vdots$}
\xLINE(12,3.5)(12,2.5)
\Put(12,3.5){\circle*{0.3}}
\Put(12,2.5){\circle*{0.3}}
\Put(11.8,1){$\Gamma^{g-1}_\nu$}

\put(12.3,2.2){$\ell_{g-1}$}

\Put(11.7,6.3){\footnotesize $st_{g-1}$}

\Put(9,1){$\;\;\cdots\;\;$}


\xLINE(12,6)(13,6)
\Put(13,6){\circle*{0.3}}
\Put(13,6){$\;\;\ldots\;\;\ldots$}
\Put(15.9,5){$\vdots$}
\Put(15.9,4){$\vdots$}
\Put(16,3.5){\circle*{0.3}}
\xLINE(16,3.5)(16,2.5)
\Put(16,2.5){\circle*{0.3}}

\xVECTOR(16.5,4)(16.5,5.5)

\xVECTOR(13.7,6.5)(15.2,6.5)

\Put(16.8,4.8){\footnotesize Infinitely many horizontal }
\Put(16.8,4.2){\footnotesize or vertical vertices}

\put(16.3,2.2){$\ell_{g}$}
\Put(16.1,1){$\Gamma^{g}_\nu$}

\end{Picture}
\end{center}
  \caption{Dual graph of an exceptional curve valuation}
  \label{fig5}
\end{figure}

\subsection{Sequence of values}

Let $\nu$ be a divisorial or exceptional curve valuation of $F$ centered at $R$, let ${\mathcal C}_{\nu}=\{p_i\}_{i\geq 1}$ be its cluster of centers and, for each $i\geq 1$, let $\mathfrak{m}_i$ be the maximal ideal of the local ring $R_i={\mathcal O}_{X_i,p_i}$; write $\nu(\mathfrak{m}_i):=\min\{\nu(x)\mid x\in \mathfrak{m}_i\setminus \{0\}\}$. The sequence $\{\nu(\mathfrak{m}_i)\}_{i\geq 1}$ is called the \emph{sequence of values} of $\nu$, and its elements satisfy the \emph{proximity equalities} \cite[Theorem 8.1.7]{C}:
\begin{equation}\label{proximity}
\nu(\mathfrak{m}_i)=\sum_{p_j\rightarrow p_i} \nu(\mathfrak{m}_j),\;\;\; i\geq 1,
\end{equation}
whenever the set $\{p_j\in {\mathcal C}_{\nu}\mid p_j \rightarrow p_i\}$ is not empty.

In the case of exceptional curve valuations, when $p_i\rightarrow p_r$ for all $i> r$, the equality $\nu(\mathfrak{m}_r)=\sum_{p_j\rightarrow p_r} \nu(\mathfrak{m}_j)$ (which involves an infinite sum) must be understood as $\nu(\mathfrak{m}_r)>n\cdot \nu(\mathfrak{m}_{r+1})$ for any positive integer $n$. As a consequence, considering the value group included in $\mathbb{Z}^2$ (with lexicographical ordering), it holds that $\nu(\mathfrak{m}_r)=(a,b)$ and $\nu(\mathfrak{m}_i)=(0,c)$ for all $i>r$, where $a,b,c$ are integers, $a$ and $c$ being strictly positive. In fact, any choice of values $a,b,c$ with these conditions determines an exceptional curve valuation $\omega$ equivalent to $\nu$ such that ${\mathcal C}_{\omega}={\mathcal C}_{\nu}$.

\begin{definition}
\label{curveta}
{\rm
An $i$-\emph{curvette} is an analytically irreducible element of $R$, which gives rise to a germ of curve whose strict transform on $X_i$ is non-singular and meets transversally the divisor $E_i$ at a general point.
}
\end{definition}

Throughout the paper we will use the symbol $\varphi_i$ to denote an $i$-curvette.\\

Notice that, in the divisorial case, the proximity equalities (\ref{proximity}) imply that $\nu(\mathfrak{m}_i)={\rm mult}_{p_i}(\varphi_n)$ for all $i\geq 1$, where $E_n$ is the divisor defining $\nu$ and, for all $f\in R\setminus \{0\}$, ${\rm mult}_{p_i}(f)$ denotes the multiplicity of the strict transform in $R_i$ of the germ of curve with equation $f=0$.

\subsection{Noether formula}

Let $\nu$ be a divisorial or exceptional curve valuation. For every $f\in R\setminus \{0\}$, the value $\nu(f)$ can be computed using the so-called Noether formula \cite[Theorem 8.1.6]{C}:
\begin{equation}\label{Noether}
\nu(f)=\sum_{i\geq 1} {\rm mult}_{p_i}(f) \cdot \nu(\mathfrak{m}_i).
\end{equation}
A straightforward consequence of the above formula is that, when $\nu$ is divisorial, $\nu(f)$ coincides with the intersection multiplicity at $p$ of the germs of curve with equations $f=0$ and $\varphi_n=0$, where $E_n$ is the divisor defining $\nu$.

\begin{remark}\label{equivalentinf}
{\rm Assume that $\nu$ is exceptional curve and let ${\mathcal C}_{\nu}=\{p_i\}_{i\geq 1}$ and $r$ be as before. Notice that, due to (\ref{proximity}) and the Noether formula, the valuation $\nu$ is determined by the points $p_1, p_2, \ldots,p_{r+1}$ as well as by the values $\nu(\mathfrak{m}_r)$ and $\nu(\mathfrak{m}_{r+1})$. Moreover, for any linear automorphism $\psi:\mathbb{R}^2\rightarrow \mathbb{R}^2$, the composition $\psi\circ\nu$ defines a plane valuation that is equivalent to $\nu$ (in the sense of \cite[page 33]{z-s}) and whose value group is $\psi(\mathbb{Z}^2)$ (with the order relation that is induced, through $\psi$, by the lexicographical ordering in $\mathbb{Z}^2$). Furthermore, each plane valuation equivalent to $\nu$ with value group contained in $\mathbb{R}^2$ arises in this way \cite[page 49]{z-s}.
}
\end{remark}




\subsection{Maximal contact values and Puiseux exponents}\label{Puiseux}
Next we recall two families of invariants: the maximal contact values (for divisorial and exceptional curve valuations) and the Puiseux exponents (for divisorial valuations) (see \cite{deganu}).

Let $\nu$ be a divisorial or exceptional curve valuation and let $g$ be its number of Puiseux pairs. Define $\betabarra_0({\nu}):=\nu(\varphi_1)=\nu(\mathfrak{m}_1)$ and, for each $j\in \{1, 2, \ldots, g\}$, $\betabarra_j({\nu}):=\nu(\varphi_{\ell_j})$. If $\nu$ is divisorial, we define also $\betabarra_{g+1}({\nu}):=\nu(\varphi_n)$, where $E_n$ is the divisor defining $\nu$. The elements $\betabarra_0(\nu), \betabarra_1(\nu),\ldots$ are called \emph{maximal contact values} of $\nu$.


\begin{remark}
{\rm When $\nu$ is divisorial, the Noether formula (\ref{Noether}) proves that
$$\betabarra_{g+1}(\nu)=\sum_{i=1}^n \nu(\mathfrak{m}_i)^2.$$
Moreover, $$\dim_{\mathbb{C}}\; {\mathcal O}_{\mathbb{P}^2,p}/{\mathcal P}_{\alpha(k)}=\sum_{i=1}^n k \nu(\mathfrak{m}_i)(k \nu(\mathfrak{m}_i)+1)/2,$$ where $\alpha(k):=k\sum_{i=1}^n  \nu(\mathfrak{m}_i)^2$ for all positive integers $k$ (see \cite[4.7]{C}). This proves the equality
$$\betabarra_{g+1}(\nu)={\rm vol}(\nu)^{-1}.$$
}
\end{remark}

The \emph{Puiseux exponents} of $\nu$ are the rational numbers defined by:
\begin{equation}\label{Delta}
\beta'_{j}(\nu):=1+\frac{\betabarra_j(\nu)-n_{j-1}(\nu)\betabarra_{j-1}(\nu)}{e_{j-1}(\nu)},\;\;\;\; j=1, 2, \ldots, g+1,
\end{equation}
where $n_0(\nu)=1$, $e_0(\nu):=\betabarra_0(\nu)$ and, for all $j\geq 1$, set $$e_j(\nu):=\gcd(\betabarra_0(\nu), \betabarra_1(\nu), \ldots, \betabarra_j(\nu))$$ and  $n_j(\nu):=e_{j-1}(\nu)/e_j(\nu)$.

Consider an index $1 \leq j \leq g$; then the continued fraction expansion of the Puiseux exponent $\beta'_j(\nu)$ determines (and is determined by) the Enriques diagram of the $j$th Puiseux pair of $\nu$. Indeed, writing
$$[a_0,a_1,\ldots,a_s]:=a_0 + \frac{1}{a_1 +
\frac{1}{\ddots a_s}}$$ for non-negative integers $a_0, a_1, \ldots, a_s$ such that $a_i\neq 0$ if $i>0$, it holds that the integers $m_l$ in the expansion
$\beta'_j(\nu)=[m_0,m_1,\ldots,m_t]$ are the number of consecutive points $p_k$ in the $j$th Puiseux pair with the same value $\nu(\mathfrak{m}_k)$.
Moreover $\beta'_{g+1}(\nu)$ is the number of vertices in the \emph{tail} of $\nu$ plus 1. This allows us to get the Enriques diagram of $\nu$ from its Puiseux exponents. 
Conversely, an analogous reasoning yields that the continued fraction expansion of the values $\beta'_j(\nu)$ of a divisorial valuation $\nu$ can be computed from its Enriques diagram ${\mathcal E}_{\nu}$. In fact, for all $k>1$, it holds that $\nu(\mathfrak{m}_k)\neq \nu(\mathfrak{m}_{k-1})$ if and only if the edge $p_kp_{k+1}$ (if any) is orthogonal to the edge $p_{k-1}p_k$.

To sum up, the Enriques diagram ${\mathcal E}_{\nu}$ is a union of maximal smooth paths such that there is a one-to-one correspondence between the set of (different) values in $\{\nu(\mathfrak{m_i})\}$ and this set of smooth paths (see the forthcoming Example \ref{enric}). Moreover,
for each $j=1, 2, \ldots, g$ (respectively, $g+1$), $\beta'_j(\nu)$ has a continued fraction expansion $[m_0,m_1,\ldots,m_t]$, where $t+1$ is the number of maximal smooth paths in the subgraph of ${\mathcal E}_{\nu}$ whose vertices correspond to divisors $E_i$ belonging to the $j$th Puiseux pair of ${\nu}$ (respectively, 1). Let us denote the number $t$ by $\sigma_j(\nu)$.

\begin{example}\label{enric}
Let $\nu$ be a divisorial valuation whose Enriques diagram is that depicted in Figure \ref{fig7}; there we have labeled the vertices with the values $\{\nu(\mathfrak{m}_i)\}_{i=1}^{10}$. Then $\betabarra_0(\nu)=24$, $\betabarra_1(\nu)=57$ and $\betabarra_2(\nu)=458$, $\betabarra_3(\nu)=1374$, and
$$\beta'_1(\nu)=\frac{57}{24}=[2,2,1,2],\;\;\; \beta'_2(\nu)=\frac{5}{3}=[1,1,2],\;\;\; \beta'_3(\nu)=1.$$
\end{example}

\begin{figure}
\begin{center}

\newcommand{\norma}[2]{%
\SQUARE{#1}{\m}
\SQUARE{#2}{\n}
\ADD{\m}{\n}{\k}
\SQUAREROOT{\k}{\norm}
}


\newcommand{\arco}[6]{%
\norma{#1}{#2}
\DIVIDE{#2}{\norm}{\tangentey}
\DIVIDE{#1}{\norm}{\tangentex}
\COPY{\tangentey}{\ortogonalx}
\COPY{\tangentey}{\centrox}
\SUBTRACT{0}{\tangentex}{\ortogonaly}
\MULTIPLY{\centrox}{#6}{\centrox}
\MULTIPLY{\ortogonaly}{#6}{\centroy}
\ADD{#3}{\centrox}{\centrox}
\ADD{#4}{\centroy}{\centroy}
\changereferencesystem(\centrox,\centroy)(\ortogonalx,\ortogonaly)(\tangentex,\tangentey)
\radiansangles
\SUBTRACT{\numberPI}{#5}{\angulo}
\ellipticArc{#6}{#6}{\angulo}{\numberPI}
\COS{\angulo}{\puntox}
\SIN{\angulo}{\puntoy}
\MULTIPLY{#6}{\puntox}{\puntox}
\MULTIPLY{#6}{\puntoy}{\puntoy}
\Put(\puntox,\puntoy){\circle*{0.08}}
\SUBTRACT{0}{\puntox}{\direcciony}
\COPY{\puntoy}{\direccionx}
\norma{\direccionx}{\direcciony}
\DIVIDE{\direccionx}{\norm}{\direccionx}
\DIVIDE{\direcciony}{\norm}{\direcciony}
\COPY{\direcciony}{\ortogonalx}
\COPY{\direccionx}{\ortogonaly}
\SUBTRACT{0}{\ortogonaly}{\ortogonaly}
\changereferencesystem(\puntox,\puntoy)(\ortogonalx,\ortogonaly)(\direccionx,\direcciony)
}

\newcommand{\arcosinpunto}[6]{%
\norma{#1}{#2}
\DIVIDE{#2}{\norm}{\tangentey}
\DIVIDE{#1}{\norm}{\tangentex}
\COPY{\tangentey}{\ortogonalx}
\COPY{\tangentey}{\centrox}
\SUBTRACT{0}{\tangentex}{\ortogonaly}
\MULTIPLY{\centrox}{#6}{\centrox}
\MULTIPLY{\ortogonaly}{#6}{\centroy}
\ADD{#3}{\centrox}{\centrox}
\ADD{#4}{\centroy}{\centroy}
\changereferencesystem(\centrox,\centroy)(\ortogonalx,\ortogonaly)(\tangentex,\tangentey)
\radiansangles
\SUBTRACT{\numberPI}{#5}{\angulo}
\ellipticArc{#6}{#6}{\angulo}{\numberPI}
\COS{\angulo}{\puntox}
\SIN{\angulo}{\puntoy}
\MULTIPLY{#6}{\puntox}{\puntox}
\MULTIPLY{#6}{\puntoy}{\puntoy}
\SUBTRACT{0}{\puntox}{\direcciony}
\COPY{\puntoy}{\direccionx}
\norma{\direccionx}{\direcciony}
\DIVIDE{\direccionx}{\norm}{\direccionx}
\DIVIDE{\direcciony}{\norm}{\direcciony}
\COPY{\direcciony}{\ortogonalx}
\COPY{\direccionx}{\ortogonaly}
\SUBTRACT{0}{\ortogonaly}{\ortogonaly}
\changereferencesystem(\puntox,\puntoy)(\ortogonalx,\ortogonaly)(\direccionx,\direcciony)
}

\newcommand{\arcoblanco}[6]{%
\norma{#1}{#2}
\DIVIDE{#2}{\norm}{\tangentey}
\DIVIDE{#1}{\norm}{\tangentex}
\COPY{\tangentey}{\ortogonalx}
\COPY{\tangentey}{\centrox}
\SUBTRACT{0}{\tangentex}{\ortogonaly}
\MULTIPLY{\centrox}{#6}{\centrox}
\MULTIPLY{\ortogonaly}{#6}{\centroy}
\ADD{#3}{\centrox}{\centrox}
\ADD{#4}{\centroy}{\centroy}
\changereferencesystem(\centrox,\centroy)(\ortogonalx,\ortogonaly)(\tangentex,\tangentey)
\radiansangles
\SUBTRACT{\numberPI}{#5}{\angulo}
\COS{\angulo}{\puntox}
\SIN{\angulo}{\puntoy}
\MULTIPLY{#6}{\puntox}{\puntox}
\MULTIPLY{#6}{\puntoy}{\puntoy}
\Put(\puntox,\puntoy){\circle*{0.04}}
\SUBTRACT{0}{\puntox}{\direcciony}
\COPY{\puntoy}{\direccionx}
\norma{\direccionx}{\direcciony}
\DIVIDE{\direccionx}{\norm}{\direccionx}
\DIVIDE{\direcciony}{\norm}{\direcciony}
\COPY{\direcciony}{\ortogonalx}
\COPY{\direccionx}{\ortogonaly}
\SUBTRACT{0}{\ortogonaly}{\ortogonaly}
\changereferencesystem(\puntox,\puntoy)(\ortogonalx,\ortogonaly)(\direccionx,\direcciony)
}

\newcommand{\arcoblancosinpunto}[6]{%
\norma{#1}{#2}
\DIVIDE{#2}{\norm}{\tangentey}
\DIVIDE{#1}{\norm}{\tangentex}
\COPY{\tangentey}{\ortogonalx}
\COPY{\tangentey}{\centrox}
\SUBTRACT{0}{\tangentex}{\ortogonaly}
\MULTIPLY{\centrox}{#6}{\centrox}
\MULTIPLY{\ortogonaly}{#6}{\centroy}
\ADD{#3}{\centrox}{\centrox}
\ADD{#4}{\centroy}{\centroy}
\changereferencesystem(\centrox,\centroy)(\ortogonalx,\ortogonaly)(\tangentex,\tangentey)
\radiansangles
\SUBTRACT{\numberPI}{#5}{\angulo}
\COS{\angulo}{\puntox}
\SIN{\angulo}{\puntoy}
\MULTIPLY{#6}{\puntox}{\puntox}
\MULTIPLY{#6}{\puntoy}{\puntoy}
\SUBTRACT{0}{\puntox}{\direcciony}
\COPY{\puntoy}{\direccionx}
\norma{\direccionx}{\direcciony}
\DIVIDE{\direccionx}{\norm}{\direccionx}
\DIVIDE{\direcciony}{\norm}{\direcciony}
\COPY{\direcciony}{\ortogonalx}
\COPY{\direccionx}{\ortogonaly}
\SUBTRACT{0}{\ortogonaly}{\ortogonaly}
\changereferencesystem(\puntox,\puntoy)(\ortogonalx,\ortogonaly)(\direccionx,\direcciony)
}

\newcommand{\segmento}[4]{%
\MULTIPLY{#3}{#4}{\long}
\MULTIPLY{#1}{\long}{\dirx}
\MULTIPLY{#2}{\long}{\diry}
\xLINE(0,0)(\dirx,\diry)
\Put(\dirx,\diry){\circle*{0.08}}
\changereferencesystem(\dirx,\diry)(#2,-#1)(#1,#2)
}

\newcommand{\segmentosinpunto}[4]{%
\MULTIPLY{#3}{#4}{\long}
\MULTIPLY{#1}{\long}{\dirx}
\MULTIPLY{#2}{\long}{\diry}
\xLINE(0,0)(\dirx,\diry)
\changereferencesystem(\dirx,\diry)(#2,-#1)(#1,#2)
}

\newcommand{\segmentoblanco}[4]{%
\MULTIPLY{#3}{#4}{\long}
\MULTIPLY{#1}{\long}{\dirx}
\MULTIPLY{#2}{\long}{\diry}
\Put(\dirx,\diry){\circle*{0.04}}
\changereferencesystem(\dirx,\diry)(#2,-#1)(#1,#2)
}

\newcommand{\segmentoblancosinpunto}[4]{%
\MULTIPLY{#3}{#4}{\long}
\MULTIPLY{#1}{\long}{\dirx}
\MULTIPLY{#2}{\long}{\diry}
\changereferencesystem(\dirx,\diry)(#2,-#1)(#1,#2)
}

\setlength{\unitlength}{2cm}%

\begin{Picture}(0,-0.5)(3,2)
\thicklines

\referencesystem(0,0)(1,0)(0,1)

\COPY{1}{\rad}
\COPY{0.5}{\ang}
\DIVIDE{\ang}{2}{\angbis}
\DIVIDE{\ang}{3}{\angbisbis}

\COPY{0}{\taux}
\COPY{1}{\tauy}
\COPY{0}{\pux}
\COPY{0}{\puy}
\Put(\pux,\puy){\circle*{0.08}}
\Put(0.1,\puy){\tiny{$24$}}

\arco{\taux}{\tauy}{\pux}{\puy}{\ang}{\rad}\Put(-0.2,0){\tiny{$24$}}
\arco{0}{1}{0}{0}{\ang}{\rad}\Put(0,0.1){\tiny{$9$}}
\segmento{1}{0}{\ang}{\rad}\Put(-0.1,0.1){\tiny{$9$}}
\segmento{0}{1}{\ang}{\rad}\Put(0.1,0.15){\tiny{$6$}}
\segmento{-1}{0}{\ang}{\rad}\Put(-0.1,0){\tiny{$3$}}
\segmento{1}{0}{\ang}{\rad}\Put(-0.1,0.1){\tiny{$3$}}

\changereferencesystem(0,0)(-1,0)(0,1)
\arco{0}{1}{0}{0}{\ang}{\rad}\Put(-0.1,0.1){\tiny{$2$}}
\segmento{1}{0}{\ang}{\rad}\Put(0,0.1){\tiny{$1$}}
\segmento{-1}{0}{\ang}{\rad}\Put(0,0.1){\tiny{$1$}}

\end{Picture}
\end{center}

\caption{Enriques diagram in Example \ref{enric}}
  \label{fig7}
\end{figure}

\subsection{Intersection numbers between curvettes}
\label{computation}

In this section we provide formulae to compute the intersection multiplicity at $p$ of two curvettes in terms of the maximal contact values of the divisorial valuations of their defining exceptional divisors $E_i$. This result will be essential in the proof of Theorem \ref{triangulo}.

Fix a divisorial valuation $\nu$, consider its cluster of centers ${\mathcal C}_{\nu}=\{p_i\}_{i=1}^n$ and the dual graph $\Gamma_{\nu}$. For each index $k\in \{1, 2, \ldots, n\}$ we denote by $\rho(k)$ the maximum natural number $m$ such that the strict transform of $\varphi_k$ passes through all points of ${\mathcal C}_{\nu}$ in the $m$th Puiseux pair of $\nu$. Now we state the above mentioned result on intersection multiplicity:

\begin{proposition}\label{felix}
Let $i,j$ be two indices in $\{1, 2, \ldots,n\}$ such that $i<j$, and let $\nu_i$ and $\nu_j$ be the divisorial valuations defined by the divisors $E_i$ and $E_j$. Then:
\begin{itemize}
\item[(a)] If $E_i$ is free and $i< \ell_{\rho(i)+1}$ we have that
$$
(\varphi_i,\varphi_j)_p= e_{\rho(i)-1}(\nu_i) \bar{\beta}_{\rho(i)}(\nu_j) + d e_{\rho(i)} (\nu_i) e_{\rho(i)}(\nu_j)$$
$$= e_{\rho(i)-1}(\nu_j) \bar{\beta}_{\rho(i)}(\nu_i) + d e_{\rho(i)}(\nu_i) e_{\rho(i)}(\nu_j),
$$
where $d$ is the length of the path $[st_{\rho(i)}+1,i]$  (in the dual graph $\Gamma_{\nu}$) and $e_{-1}(\nu_i)=e_{-1}(\nu_j):=0$.
\item[(b)] Otherwise
$$(\varphi_i,\varphi_j)_p=\min \{e_{\rho(i)}(\nu_i) \bar{\beta}_{\rho(i) +1}(\nu_j), e_{\rho(i)} (\nu_j)\bar{\beta}_{\rho(i) +1}(\nu_i) \}$$
and, moreover, $\frac{\bar{\beta}_{\rho(i) +1}(\nu_i)}{e_{\rho(i)}(\nu_i)}\leq \frac{\bar{\beta}_{\rho(i) +1}(\nu_j)}{e_{\rho(i)}(\nu_j)}$ if and only if $j\not\preccurlyeq i$ (with equality when $\rho(i)<\rho(j)$).
\end{itemize}
\end{proposition}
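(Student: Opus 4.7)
The plan is to reduce the intersection multiplicity to an explicit sum via the Noether formula (\ref{Noether}) and then to evaluate this sum by splitting indices according to Puiseux pairs. Using the identity $\nu_\bullet(\mathfrak{m}_k)=\mathrm{mult}_{p_k}(\varphi_\bullet)$ recalled earlier in the divisorial case, together with the fact that $\mathrm{mult}_{p_k}(\varphi_i)=\nu_i(\mathfrak{m}_k)$ vanishes for $k>i$, Noether's formula yields
$$(\varphi_i,\varphi_j)_p \;=\; \nu_j(\varphi_i) \;=\; \sum_{k=1}^{i}\nu_i(\mathfrak{m}_k)\,\nu_j(\mathfrak{m}_k),$$
which is manifestly symmetric in $i$ and $j$. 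The structural inputs one needs are that the clusters of $\nu_i$ and $\nu_j$ share their first $\rho(i)$ Puiseux pairs, that on the $m$-th shared pair the proportionality $\nu_j(\mathfrak{m}_k)=(e_{m-1}(\nu_j)/e_{m-1}(\nu_i))\,\nu_i(\mathfrak{m}_k)$ holds, and that on the free part of the $(\rho(i)+1)$-th pair the multiplicities $\nu_\bullet(\mathfrak{m}_k)$ are constant equal to $e_{\rho(i)}(\nu_\bullet)$.

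For case (a), I would split the sum into a shared block $[1,st_{\rho(i)}]$ and a free tail $[st_{\rho(i)}+1,i]$. Telescoping over the $\rho(i)$ shared Puiseux pairs via the recurrence
$$\bar\beta_m(\nu)=n_{m-1}(\nu)\bar\beta_{m-1}(\nu)+e_{m-1}(\nu)(\beta'_m(\nu)-1)$$
derived from (\ref{Delta}) identifies the shared-block contribution with $e_{\rho(i)-1}(\nu_i)\bar\beta_{\rho(i)}(\nu_j)$; the free tail contributes $d\,e_{\rho(i)}(\nu_i)e_{\rho(i)}(\nu_j)$ because on each of the $d$ indices there the product $\nu_i(\mathfrak{m}_k)\nu_j(\mathfrak{m}_k)$ is constant equal to $e_{\rho(i)}(\nu_i)e_{\rho(i)}(\nu_j)$. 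The second formula in (a) follows by swapping the roles of $i$ and $j$, and the equality of the two is a direct consequence of the same telescoping identity applied symmetrically.

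For case (b), either $E_i$ is satellite or $E_i$ lies past the dead end $\ell_{\rho(i)+1}$, so the clusters of $\nu_i$ and $\nu_j$ diverge strictly inside the $(\rho(i)+1)$-th Puiseux pair. Iterating the recurrence one more step absorbs the entire sum into $e_{\rho(i)}(\nu_i)\bar\beta_{\rho(i)+1}(\nu_j)$ or, symmetrically, $e_{\rho(i)}(\nu_j)\bar\beta_{\rho(i)+1}(\nu_i)$; only the smaller of the two is realized by Noether's formula, since the summation counts common intersection contributions only up to the first point of divergence. The comparison $\bar\beta_{\rho(i)+1}(\nu_i)/e_{\rho(i)}(\nu_i)\le \bar\beta_{\rho(i)+1}(\nu_j)/e_{\rho(i)}(\nu_j)\Longleftrightarrow j\not\preccurlyeq i$ follows from the strict monotonicity of the normalized value $\bar\beta_{\rho(i)+1}/e_{\rho(i)}$ along descending paths of $\Gamma_\nu^{\rho(i)+1}$ from $st_{\rho(i)}$: it is larger on the branch whose divergence lies further from the satellite, and that branch is the one containing the vertex of $E_j$ precisely when $j\not\preccurlyeq i$. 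The main obstacle lies here in case (b): one must iterate (\ref{proximity}) carefully along both branches of $\Gamma_\nu^{\rho(i)+1}$ at the satellite/dead-end interface to identify the correct $\min$-expression and align it with the tree relation $\preccurlyeq$; the rest is mechanical bookkeeping using (\ref{Noether}), (\ref{proximity}) and (\ref{Delta}).
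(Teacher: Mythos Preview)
Your approach is sound and genuinely different from the paper's. The paper does not compute anything for parts (a) and the $\min$-formula in (b): it simply cites \cite[page 362]{del} and says the formulae can be read off from there after adapting notation. Only the final biconditional in (b) receives an argument, and that argument is slightly different from yours: the paper uses (\ref{Delta}) to rewrite the inequality $\bar\beta_{\rho(i)+1}(\nu_i)/e_{\rho(i)}(\nu_i)\le \bar\beta_{\rho(i)+1}(\nu_j)/e_{\rho(i)}(\nu_j)$ as $\beta'_{\rho(i)+1}(\nu_i)\le \beta'_{\rho(i)+1}(\nu_j)$, and then appeals to the continued-fraction expansions of these Puiseux exponents together with the Enriques-diagram/dual-graph dictionary to identify this with $j\not\preccurlyeq i$. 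Your ``monotonicity along descending paths'' is the geometric content of the same fact, but you would still need to justify it, and the cleanest way is precisely the paper's route through (\ref{Delta}) and continued fractions. Your direct Noether-formula computation is more self-contained than a citation and is the standard way such formulae are actually derived; the one place where your sketch is thin is the sentence ``only the smaller of the two is realized \ldots\ since the summation counts common intersection contributions only up to the first point of divergence'': the clusters of $\nu_i$ and $\nu_j$ do not diverge (both are initial segments of $\mathcal{C}_\nu$), so you should instead say that within the $(\rho(i)+1)$-th pair the Euclidean pattern of the sequence $\nu_i(\mathfrak{m}_k)$ truncates at $k=i$, and a case analysis on whether $i\preccurlyeq j$ or $j\preccurlyeq i$ in $\Gamma_\nu^{\rho(i)+1}$ determines which of the two closed forms the partial sum attains.
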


\begin{proof}
This result, except for the last part of (b), can be deduced from \cite[page 362]{del} adapting the notation to our purposes.

For the remaining part notice that, by (\ref{Delta}) the inequality $\frac{\bar{\beta}_{\rho(i) +1}(\nu_i)}{e_{\rho(i)}(\nu_i)}\leq \frac{\bar{\beta}_{\rho(i) +1}(\nu_j)}{e_{\rho(i)}(\nu_j)}$ holds if and only if $\beta'_{\rho(i) +1}(\nu_i)\leq \beta'_{\rho(i) +1}(\nu_j)$; and this is the case if and only if $j\not \preccurlyeq i$ (with equality when $\rho(i)<\rho(j)$). Indeed, this follows after considering the construction of the dual graph $\Gamma_{\nu}$ from the Enriques diagram of $\nu$ (see \cite[Proposition 4.2.2]{C} and the paragraphs before Example \ref{enric}), and using the continued fraction expansions of the Puiseux exponents (see Section \ref{Puiseux}) and well-known properties of continued fractions.
\end{proof}

\section{The Newton-Okounkov body of an exceptional curve valuation}

\subsection{Newton-Okounkov bodies. General setting}

Let $X$ be a smooth irreducible projective variety of dimension $n$ over the complex numbers $\mathbb{C}$ and let $K(X)$ be its function field. Consider a flag of subvarieties, i.e., a sequence of subvarieties of $X$
$$
Y_{\bullet}:=\left\{ X=Y_0 \supset Y_1 \supset Y_2 \supset \cdots \supset Y_n = \{q\} \right\}
$$
such that each $Y_i$ is a smooth irreducible subvariety of codimension $i$ in $X$. The point $ q \in X$ is called the \emph{center} of the flag.

A discrete valuation $ \nu_{Y_{\bullet}}$ of rank $n$ may be associated to the flag $Y_{\bullet}$  as follows. First, let $g_i=0$ be the equation of $Y_i$ in $Y_{i-1}$ in a Zariski open set containg $q$, which is possible since $Y_i$ has codimension $i$. Then, for $f \in K(X)$, define
$$
\upsilon_1 (f):=\mathrm{ord}_{Y_1}(f), \;\;\; f_1=\left.\frac{f}{g_1^{v_1(f)}}\right|_{Y_1}
$$
and, for $2 \leq i \leq n$,
$$
\upsilon_i (f):=\mathrm{ord}_{Y_i}(f_{i-1}), ~ \mbox{ where } f_{i}:=(f_{i-1}/g_{i}^{\upsilon_{i}(f)})|_{Y_i}.
$$

The map $\nu_{Y_{\bullet}}:K(X) \setminus \{0\} \to \mathbb{Z}^n_{\mathrm{lex}}$ is defined by the sequence of maps $\upsilon_i$, $1 \leq i \leq n$ as $\nu_{Y_{\bullet}}:=(\upsilon_1, \upsilon_2, \ldots, \upsilon_n)$. It is a discrete valuation of rank $n$, and any valuation of maximal rank comes from a flag \cite[Theorem 2.9]{cil}.

\begin{definition}
Given a flag  $Y_{\bullet}$ and a big divisor $D$ on $X$, the \emph{Newton-Okounkov body} of $D$ with respect to $Y_{\bullet}$ (or $ \nu_{Y_{\bullet}}$) is defined to be the following subset of $\mathbb{R}^n$:
$$
\Delta_{Y_{\bullet}}(D) = \Delta_{\nu_{Y_{\bullet}}}(D) :=\overline{\bigcup_{m \geq 1}\left \{ \frac{\nu_{Y_{\bullet}} (f)}{m} \;|\;  f \in H^0(X, m D)  \setminus \{0\} \right \}},
$$
where $\overline{\{~\cdot~\}}$ stands for the closed convex hull of the set $\{~\cdot~\}$.
\end{definition}

Newton-Okounkov bodies are convex and compact sets with nonempty interior. In \cite{KLM} it is proved that $\Delta_{Y_{\bullet}}(D)$ is a polygon if $X$ is a surface, and that in higher dimensions it can be non-polyhedral. Moreover,
\begin{equation}\label{dimension}
\mathrm{vol}_X(D) = n! \; \mathrm{vol}_{\mathbb{R}^n} \left(\Delta_{Y_{\bullet}}(D)\right),
\end{equation}
where $\mathrm{vol}_{\mathbb{R}^n} $ means Euclidean volume and
\[
\mathrm{vol}_X(D):=\lim_{m \to \infty} \frac{h^0\left(X, \mathcal{O}_X(m D)\right)}{m^n/n!}.
\]

\subsection{Exceptional curve valuations as flag valuations}\label{elflag}

From now on in this paper, we will consider any surface $X=X_r$ defined by a finite sequence of blow-ups of points as in (\ref{seq}) and its corresponding divisorial valuation $\nu_r$. Our goal is to study the Newton-Okounkov body of the flag
\[
E_{\bullet}:= \left\{ X=X_r \supset E_r \supset \{q:=p_{r+1}\} \right\}.
\]
This flag determines a cluster of centers of blowing-ups ${\mathcal C}_{\nu}=\{p_i\}_{i=1}^{\infty}$, where $\{p_i\}_{i=1}^{r+1}$ are given directly by the flag and the remaining points satisfy $p_i \rightarrow p_r$ for all $i >r$. It is well known that ${\mathcal C}_{\nu}$ defines an exceptional curve valuation $\nu$ (up to equivalence of valuations). Also, when $q$ is not free, we set $\eta$ for the index such that $\eta\neq r$ and $p_{r+1}\in E_{\eta}$.

As already mentioned, the flag $E_{\bullet}$ defines a \emph{flag valuation} $\nu_{E_{\bullet}}$ such that,
for $f \in R = \mathcal{O}_{\mathbb{P}^2,p}$, it holds that $\nu_{E_{\bullet}}(f)=(\upsilon_1(f),\upsilon_2(f))$ with $\upsilon_1(f):=\nu_{r}(f)$ (where $\nu_r$ is the divisorial valuation defined by $E_r$) and $\upsilon_2(f):=\mathrm{ord}_q \left(\pi_r^{\ast}(f)/z_r^{\upsilon_1(f)}\right)$, where $\pi_r:X_r\to X_0$ is the composition of the first $r$ point blowing-ups with centers in ${\mathcal C}_{\nu}$, $z_r=0$ a local equation for $E_r$ and $\pi^{\ast}(f)/z_r^{\upsilon_1(f)}$ is seen as a function on $E_r$. Notice that
$$
\upsilon_2(f)=\left(\pi^{\ast}(f)/z_r^{\upsilon_1(f)},E_r \right)_q,
$$
where, as above, $(\cdot~,\cdot)_q$ stands for the intersection multiplicity at $q$.

\begin{proposition}
Under the above notation it holds that, for all $f\in R\setminus \{0\}$, $$\upsilon_2(f)=\nu_{\eta}(f)+\sum_{p_i\rightarrow p_r}{\rm mult}_{p_i}({f}),$$ where $\nu_{\eta}$ denotes the divisorial valuation defined by $E_{\eta}$.
\end{proposition}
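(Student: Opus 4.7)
The plan is to compute $\upsilon_2(f)$ directly from the given local formula
$\upsilon_2(f)=(\pi_r^\ast(f)/z_r^{\upsilon_1(f)},E_r)_q$
by decomposing the total transform of $f$ on $X_r$ into its strict and exceptional parts. Since $\nu_i(f)$ records exactly the order of vanishing of $\pi_r^\ast(f)$ along $E_i$, one has, on a neighbourhood of $\pi_r^{-1}(p)$ in $X_r$,
$$\mathrm{div}(\pi_r^\ast(f))=\tilde{C}+\sum_{i=1}^{r}\nu_i(f)\,E_i,$$
where $\tilde{C}$ denotes the strict transform of the germ $\{f=0\}$. Dividing by $z_r^{\nu_r(f)}$ cancels the $E_r$-summand near $q$, so bilinearity of the local intersection multiplicity yields
$$\upsilon_2(f)=(\tilde{C}\cdot E_r)_q+\sum_{\substack{1\leq i<r\\ q\in E_i}}\nu_i(f)\,(E_i\cdot E_r)_q.$$
Since the blow-up sequence is simple, $E_r$ meets each earlier $E_i$ transversally in at most one point, so at most one $E_i$ with $i<r$ can pass through $q$: by the definition of $\eta$ it is $E_\eta$ when $q$ is not free, and no such index exists when $q$ is free. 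The exceptional sum thus reduces to $\nu_\eta(f)$, with the convention $\nu_\eta(f)=0$ in the free case.

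It remains to prove the identity
$(\tilde{C}\cdot E_r)_q=\sum_{p_j\to p_r}{\rm mult}_{p_j}(f)$.
For this I would iterate the standard blow-up formula for local intersection multiplicities along the infinite chain of cluster centres $p_{r+1},p_{r+2},\ldots$, all of which are proximate to $p_r$. A single blow-up at $q=p_{r+1}$ gives
$$(\tilde{C}\cdot E_r)_q={\rm mult}_{p_{r+1}}(\tilde{C})\cdot\mathrm{mult}_{p_{r+1}}(E_r)+(\tilde{C}^{(1)}\cdot E_r^{(1)})_{q'},$$
where the superscript $(1)$ denotes strict transform on $X_{r+1}$ and $q'$ is the unique intersection point of $E_r^{(1)}$ with the new exceptional divisor $E_{r+1}$. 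Since $E_r$ is smooth at $q$ one has $\mathrm{mult}_{p_{r+1}}(E_r)=1$ and ${\rm mult}_{p_{r+1}}(\tilde{C})={\rm mult}_{p_{r+1}}(f)$, while $q'=p_{r+2}$ because $p_{r+2}$ is proximate to both $p_r$ and $p_{r+1}$. Iterating telescopes to $\sum_{i\geq 1}{\rm mult}_{p_{r+i}}(f)$, which is a finite sum since the proper curve $\tilde{C}$ passes through only finitely many infinitely near points; this is precisely $\sum_{p_j\to p_r}{\rm mult}_{p_j}(f)$.

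The main delicate point is the identification of the successive blow-up centres with the cluster points $p_{r+i}$; everything else reduces to routine bookkeeping with divisors and the blow-up formula. This identification rests on the proximity structure specific to an exceptional curve valuation, namely that every point of ${\mathcal C}_\nu$ beyond $p_r$ is proximate to $p_r$ while each $p_{r+i+1}$ is proximate to $p_{r+i}$, which forces $p_{r+i+1}\in E_r^{(i)}\cap E_{r+i}$ at every stage of the iteration.
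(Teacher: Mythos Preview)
Your proof is correct and follows essentially the same approach as the paper's: decompose the total transform to isolate the $\nu_\eta$-contribution, then identify the remaining term as the local intersection $(\tilde{C}\cdot E_r)_q$ and evaluate it. The paper compresses your iterated blow-up recursion into a one-line appeal to the proximity equalities for plane curve germs (Casas-Alvero, Theorem~3.5.3), while you unpack that computation explicitly.
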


\begin{proof}
Let $f\in R\setminus\{0\}$. It is clear that $\upsilon_2(f)=\nu_{\eta}(f)+ {\rm mult}_{q}(f)$. Then the result follows from the proximity equalities for germs of plane curves \cite[Theorem 3.5.3]{C}.
\end{proof}

As a consequence of the above proposition and the Noether formula (\ref{Noether}), one can deduce the following:

\begin{corollary}\label{ppp}
Let $\nu$ be an exceptional curve valuation and let $E_\bullet$ be the flag defined by ${\mathcal C}_{\nu}$. Denote also by $\nu'$ the unique valuation equivalent to $\nu$ whose value group is $\mathbb{Z}_{lex}^2$ and such that $\nu'(\mathfrak{m}_r)=(1,0)$ and $\nu'(\mathfrak{m}_{r+1})=(0,1)$. Then $\nu'=\nu_{E_{\bullet}}$.
\end{corollary}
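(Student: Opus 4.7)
The plan is to verify that $\nu_{E_\bullet}$ meets the three defining conditions of $\nu'$---being equivalent to $\nu$, having value group $\mathbb{Z}_{\mathrm{lex}}^2$, and taking the prescribed values on $\mathfrak{m}_r$ and $\mathfrak{m}_{r+1}$---and then to invoke Remark \ref{equivalentinf} for uniqueness.

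By its flag construction, $\nu_{E_\bullet}$ is a discrete valuation of $F$ of rank $2$ with value group $\mathbb{Z}_{\mathrm{lex}}^2$, and its first $r+1$ infinitely near centers coincide with $p_1,\ldots,p_{r+1}$: for $i\le r$ this is because $\upsilon_1=\nu_r$ already determines those centers, and the center in $X_r$ is $q=p_{r+1}$ by the definition of $\upsilon_2$. All the subsequent centers of $\nu_{E_\bullet}$ must lie on the successive strict transforms of $E_r$, so they are proximate to $p_r$, placing $\nu_{E_\bullet}$ in the exceptional curve class.

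To show $\nu_{E_\bullet}(\mathfrak{m}_r)=(1,0)$, pick $u\in\mathfrak{m}_r$ of multiplicity $1$ at $p_r$ whose tangent at $p_r$ differs from the tangent direction selecting $p_{r+1}$. Then $\upsilon_1(u)=\mathrm{ord}_{E_r}(u)=1$, and in a blow-up chart at $p_{r+1}$ with local equation $z_r$ for $E_r$, $(u/z_r)|_{E_r}$ is a unit at $p_{r+1}$, so $\upsilon_2(u)=0$. For the reverse inequality, any nonzero $f\in\mathfrak{m}_r$ has $\upsilon_1(f)=\nu_r(f)=\mathrm{mult}_{p_r}(f)\ge 1$, and the preceding proposition applied to $f\in R\cap\mathfrak{m}_r$, together with an analogous local chart computation for germs on $X_{r-1}$ not lying in $R$, yields $\upsilon_2(f)\ge 0$; hence $\nu_{E_\bullet}(f)\ge(1,0)$ in lex order. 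An entirely parallel argument with a local coordinate at $p_{r+1}$ transverse to $E_r$ (equivalently, an $(r+1)$-curvette) gives $\nu_{E_\bullet}(\mathfrak{m}_{r+1})=(0,1)$.

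Combining the previous steps, $\nu_{E_\bullet}$ is a plane valuation equivalent to $\nu$ with value group $\mathbb{Z}_{\mathrm{lex}}^2$ that matches $\nu'$ on the initial cluster through $p_{r+1}$ and on $\mathfrak{m}_r,\mathfrak{m}_{r+1}$, so Remark \ref{equivalentinf} forces $\nu_{E_\bullet}=\nu'$. The only delicate point is that the minimum defining $\nu_{E_\bullet}(\mathfrak{m}_r)$ need not be attained by an element of $R$; one must therefore also examine germs on $X_{r-1}$ directly, rather than only invoking the preceding proposition (which is stated for elements of $R$).
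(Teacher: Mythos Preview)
Your argument is correct and reaches the same conclusion, but it proceeds differently from the paper. The paper derives the corollary by combining the preceding proposition (the closed formula $\upsilon_2(f)=\nu_\eta(f)+\sum_{p_i\to p_r}\mathrm{mult}_{p_i}(f)$ for $f\in R$) with the Noether formula: for every $f\in R$ one checks that $(\nu_r(f),\upsilon_2(f))$ agrees with the value produced by Noether's formula for the valuation $\nu'$ normalized by $\nu'(\mathfrak{m}_r)=(1,0)$, $\nu'(\mathfrak{m}_{r+1})=(0,1)$; since both are valuations of $F=\mathrm{Frac}(R)$, equality on $R$ gives equality on $F$. Your route is instead a direct local computation of $\nu_{E_\bullet}(\mathfrak{m}_r)$ and $\nu_{E_\bullet}(\mathfrak{m}_{r+1})$ followed by the uniqueness statement in Remark~\ref{equivalentinf}. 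This is more elementary and self-contained, and in fact does not really require the preceding proposition at all: for $f\in\mathfrak{m}_r$ with $\upsilon_1(f)=1$ the function $f/z_r$ is regular along $E_r$, so $\upsilon_2(f)\ge 0$ follows immediately from the definition, without splitting into $f\in R$ versus $f\notin R$. Your final caveat about the proposition only covering elements of $R$ is therefore a correct observation, but it flags a detour you need not have taken. The paper's approach, by contrast, exploits the explicit formula for $\upsilon_2$ and thereby avoids any discussion of which elements of $\mathfrak{m}_r$ lie in $R$.
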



In spite of their importance, very few explicit examples of Newton-Okounkov bodies can be found in the literature.
We are interested in an explicit computation of the Newton-Okounkov bodies $\Delta_{\nu_{E_{\bullet}}}(H) = \Delta_{\nu_{E_{\bullet}}}$ of flags $E_{\bullet}$ as before, with respect to the  divisor $H$ given by the pull-back of the line-bundle $\mathcal{O}_{\mathbb{P}^2} (1)$. Therefore the rest of the section is devoted to state and prove results which provide an explicit description of these bodies.  Corollary \ref{ppp} allows us to associate them to arbitrary exceptional curve valuations.  In fact, we can give a slightly more general definition:

\begin{definition}
{\rm
The Newton-Okounkov body of an exceptional curve valuation $\nu$ that takes values in $\mathbb{R}^2$ is defined as
$$
\Delta_{\nu}:=\overline{\bigcup_{m \geq 1}\left \{ \frac{\nu (f)}{m} \;|\;  f \in H^0(X, {\mathcal O}_{\mathbb{P}^2}(m))  \setminus \{0\} \right \}}.
$$
}
\end{definition}

If $\nu(\mathfrak{m}_r)=(1,0)$ and $\nu(\mathfrak{m}_{r+1})=(0,1)$ we get the equality $\Delta_{\nu}=\Delta_{\nu_{E_{\bullet}}}$ by Corollary \ref{ppp}, i.e., $\Delta_{\nu}$ is the Newton-Okounkov body of the flag $E_{\bullet}$ with respect to the line bundle $H$.  Moreover notice that, by Remark \ref{equivalentinf}, if $\nu'$ is a valuation equivalent to $\nu$ with value group in $\mathbb{R}^2$, then there exists a linear automorphism $\psi:\mathbb{R}^2\rightarrow \mathbb{R}^2$ such that $\Delta_{\nu'}=\psi(\Delta_{\nu})$.

Therefore, \emph{from now on, unless otherwise be stated, we will assume that every considered exceptional curve valuation $\nu$ satisfies these conditions: its value group is $\mathbb{Z}_{lex}^2$, $\nu(\mathfrak{m}_r)=(1,0)$ and $\nu(\mathfrak{m}_{r+1})=(0,1)$}.

Newton-Okounkov bodies of exceptional curve valuations $\nu$ with only one Puiseux pair and whose last divisor $E_r$ is satellite were studied in \cite{cil}.

\subsection{The Newton-Okounkov body of a minimal exceptional curve valuation}
\label{lacuatro}

Let $(X:Y:Z)$ be projective coordinates in $\mathbb{P}^2$ and assume that $p=(1:0:0)$. Consider affine coordinates $u=Y/X$ and $v=Z/X$ around $p$ and let $\nu$ be an exceptional curve valuation centered at $R={\mathcal O}_{\mathbb{P}^2,p}$. \emph{We will keep this framework for all valuations that we will consider from now on}.

Let ${\mathcal C}_{\nu}=\{p_1=p,p_2,\ldots\}$ be the cluster of centers of $\nu$ and, keeping the notation of the preceding sections, let $p_r$ be the point such that $p_i\rightarrow p_r$ for all $i>r$.

An analogue of the Seshadri constant for valuations and line bundles on normal projective varieties was introduced in \cite{BKMS}. For the divisorial valuation $\nu_r$ and the line bundle $H$, this analogue can be defined as
\[
\hat{\mu}(\nu_r) : = \lim_{d \rightarrow \infty} \frac{\mu_d (\nu_r)}{d},
\]
where $\mu_d (\nu_r) = \max \{\nu_r(f) \;| \; f \in \mathbb{C}[u,v], \; \mathrm{deg}(f) \leq d\}$. 

It is known that $\hat{\mu}(\nu_r) \geq \sqrt{1/\mathrm{vol} (\nu_r)}$ (see \cite{BKMS}). The divisorial valuation $\nu_r$ is called to be \emph{minimal} if $\hat{\mu}(\nu_r) = \sqrt{1/\mathrm{vol} (\nu_r)}$ (see \cite{d-h-k-r-s} and \cite{g-m-m} for further information about minimal valuations).

\begin{definition}\label{minimalvaluation}
{\rm 
An exceptional curve valuation $\nu$ is called to be \emph{minimal} whenever its first component $\nu_r$ is minimal.
}
\end{definition}

Let us denote by $S_{\nu}$ the \emph{semigroup of values} of $\nu$, that is,
$$S_{\nu}:=\{\nu(f)\mid f\in R\setminus \{0\}\}\subseteq \mathbb{Z}^2,$$
endowed with the lexicographical ordering. Let $\mathfrak{C}(\nu)$ be the convex cone of $\mathbb{R}^2$ spanned by $S_{\nu}$ and $\mathfrak{H}(\nu)$ be the half-plane $\{(x,y)\in \mathbb{R}^2\mid x\leq \hat{\mu}(\nu_r)\}$. The following result yields a description of $\mathfrak{C}(\nu)\cap \mathfrak{H}(\nu)$. Let $g$ be the number of Puiseux pairs of the divisorial valuation $\nu_r$; notice that $\nu$ has $g$ (respectively, $g+1$) Puiseux pairs whenever $q=p_{r+1}$ is satellite (respectively, free).

\begin{proposition}\label{tri}
The set $\mathfrak{C}(\nu)\cap \mathfrak{H}(\nu)$ is a triangle whose vertices are
\begin{itemize}
\item[$\diamond$] The points in the plane $\mathbb{R}^2$
$$(0,0),\;\left(\hat{\mu}(\nu_r), \frac{\hat{\mu}(\nu_r)\betabarra_0(\nu_{\eta}) }{\betabarra_0(\nu_r)} \right)\; \mbox{and}\; \left(\hat{\mu}(\nu_r), \frac{\hat{\mu}(\nu_r)\betabarra_g(\nu_{\eta})}{\betabarra_g(\nu_r)} \right),$$
whenever $q= p_{r+1}$ is a satellite point and belongs to the intersection $E_r\cap E_{\eta}$ (with $\eta\neq r$).
\item[$\diamond$] The points
$$(0,0),\;\left(\hat{\mu}(\nu_r), 0 \right)\; \mbox{and}\; \left(\hat{\mu}(\nu_r), \frac{\hat{\mu}(\nu_r)}{\betabarra_{g+1}(\nu_r)} \right),$$
otherwise (i.e., when $q$ is free).
\end{itemize}

\end{proposition}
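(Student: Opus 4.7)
The plan is to prove Proposition~\ref{tri} in three steps: first, identify two elements of $R$ whose $\nu$-values span the two extreme rays of the two-dimensional convex cone $\mathfrak{C}(\nu)$; second, verify that every value in $S_\nu$ lies inside the angle they span; and finally intersect the resulting cone with the vertical line $x=\hat\mu(\nu_r)$ bounding $\mathfrak{H}(\nu)$. Because $\nu$ is a valuation, $\mathfrak{C}(\nu)$ is the closed convex cone generated by $\{\nu(B)\mid B\in R\setminus\{0\}\ \text{irreducible}\}$, so it is determined by its two extreme rays; its intersection with $\mathfrak{H}(\nu)$ is then automatically a triangle with one vertex at the origin and the other two on the line $x=\hat\mu(\nu_r)$.

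In the satellite case $q\in E_r\cap E_\eta$, the two extreme rays will be exhibited by a generic linear form through $p$ and by a dead-end curvette $\varphi_{\ell_g}$. A generic line $L$ through $p$ has multiplicity one at $p_1$ and its strict transform avoids every $p_i$ with $i\ge 2$; the Noether formula~(\ref{Noether}) applied to both $\nu_r$ and $\nu_\eta$ yields $\nu_r(L)=\betabarra_0(\nu_r)$ and $\nu_\eta(L)=\betabarra_0(\nu_\eta)$, while the formula $\upsilon_2(f)=\nu_\eta(f)+\sum_{p_i\rightarrow p_r}{\rm mult}_{p_i}(f)$ established just before Corollary~\ref{ppp} gives $\upsilon_2(L)=\betabarra_0(\nu_\eta)$; hence $\nu(L)$ spans a ray of slope $\betabarra_0(\nu_\eta)/\betabarra_0(\nu_r)$. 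For $\varphi_{\ell_g}$, the definition of maximal contact values gives $\nu_r(\varphi_{\ell_g})=\betabarra_g(\nu_r)$, and Proposition~\ref{felix} together with the structure of the dual graph forces $\nu_\eta(\varphi_{\ell_g})=\betabarra_g(\nu_\eta)$; again the infinite tail beyond $p_r$ contributes nothing to $\upsilon_2$, producing the second slope $\betabarra_g(\nu_\eta)/\betabarra_g(\nu_r)$. In the free case the two extreme rays are realised, respectively, by a generic $r$-curvette $\varphi_r$, giving $\nu(\varphi_r)=(\betabarra_{g+1}(\nu_r),0)$, and by an element $h\in R$ that shares the multiplicities of $\varphi_r$ at $p_1,\dots,p_r$ but whose strict transform on $X_r$ passes through $q$ transversally to $E_r$, giving $\nu(h)=(\betabarra_{g+1}(\nu_r),1)$.

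The decisive step is extremality: for an arbitrary irreducible branch $B\in R$, the slope $\upsilon_2(B)/\nu_r(B)$ must lie between the two slopes of Step~1. Combining the Noether formula with the formula for $\upsilon_2$, this slope is a convex combination of the ratios $\nu_\eta(\mathfrak{m}_i)/\nu_r(\mathfrak{m}_i)$ taken over the points $p_i\in\mathcal{C}_\nu$ traversed by $B$, plus a controlled contribution from the points proximate to $p_r$. The proximity equalities~(\ref{proximity}), together with the description of $\Gamma_\nu$ given in Subsection~\ref{32}, will force each such ratio to lie in $[\betabarra_0(\nu_\eta)/\betabarra_0(\nu_r),\,\betabarra_g(\nu_\eta)/\betabarra_g(\nu_r)]$ in the satellite case, and in $[0,\,1/\betabarra_{g+1}(\nu_r)]$ in the free case. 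Once extremality is established, the triangle is obtained, and a direct linear computation of the intersection of each extreme ray with the line $x=\hat\mu(\nu_r)$ produces exactly the vertex coordinates listed in the statement.

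The hardest point is the extremality bound: it is easy to produce the two candidate extreme curves, but ruling out irreducible branches that traverse the free part of the $g$-th Puiseux pair (or reach deep into the tail beyond $p_r$) requires a careful propagation of the intersection formulas of Proposition~\ref{felix} through the successive Puiseux pairs, together with the fact that the asymptotic ratio of $\nu_\eta(\mathfrak{m}_i)$ to $\nu_r(\mathfrak{m}_i)$ stabilises precisely on the two extremal slopes. Handling both parts of the branches---the one lying in the finite configuration of $\nu_r$ and the one inside the infinite component of $\Gamma_\nu$---in a uniform way is the technical core of the argument.
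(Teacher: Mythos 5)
Your first step is sound and in fact coincides with the paper's computation: the two elements you exhibit (a generic line and the dead-end curvette $\varphi_{\ell_g}$, resp.\ $\varphi_r$ and an $(r+1)$-curvette through $q$) are exactly the ones whose $\nu$-values are the maximal contact values $\betabarra_0(\nu)=(\betabarra_0(\nu_r),\betabarra_0(\nu_\eta))$ and $\betabarra_g(\nu)=(\betabarra_g(\nu_r),\betabarra_g(\nu_\eta))$ (resp.\ $(\betabarra_{g+1}(\nu_r),0)$ and $(\betabarra_{g+1}(\nu_r),1)$), and the slope computations are correct. The genuine gap is your ``decisive step'', the extremality bound, which is the entire content of the proposition and which you only assert (``the proximity equalities \ldots\ \emph{will force} each such ratio to lie in \ldots''). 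The reduction you propose --- writing $\upsilon_2(B)/\nu_r(B)$ as a weighted average of per-point ratios $\nu_\eta(\mathfrak m_i)/\nu_r(\mathfrak m_i)$ and bounding each ratio separately --- is neither carried out nor obviously the right reduction: the Noether expansions of $\nu_\eta(B)$ and $\nu_r(B)$ run over different clusters, the correction term $\sum_{p_i\rightarrow p_r}{\rm mult}_{p_i}(B)$ must be distributed over the points beyond $p_r$, and no argument is given that each individual per-point ratio lies in the stated interval. The statement that is actually provable concerns the slopes of the values $\nu(\varphi_i)$ of curvettes (one reduces an arbitrary branch to curvettes by Zariski factorization of complete ideals, as in Lemma \ref{curvetas}), and establishing those slopes is precisely the long case analysis via Proposition \ref{felix} and continued-fraction identities that the paper only carries out later, in the proof of Theorem \ref{triangulo}.

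The paper's own proof of Proposition \ref{tri} avoids all of this by invoking Spivakovsky's theorem that the value semigroup $S_\nu$ is generated by the finitely many maximal contact values $\betabarra_0(\nu),\ldots,\betabarra_g(\nu)$ (resp.\ $\betabarra_{g+1}(\nu)$ when $q$ is free), so that $\mathfrak C(\nu)$ is the cone spanned by these finitely many vectors; combined with the fact from \cite{deganu} that $\betabarra_1(\nu),\ldots,\betabarra_{g-1}(\nu)$ lie on the ray through $\betabarra_0(\nu)$ while $\betabarra_g(\nu)$ does not, the cone is spanned by $\betabarra_0(\nu)$ and $\betabarra_g(\nu)$ and the listed vertices follow by intersecting with the line $x=\hat\mu(\nu_r)$. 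To repair your argument you must either quote that semigroup-generation result or supply the full curvette-slope analysis; as written, the ``technical core'' you yourself identify is missing.
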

\begin{proof}

To begin with, let us observe that the semigroup $S_{\nu}$ is spanned by the maximal contact values $\betabarra_0({\nu}), \betabarra_1({\nu}), \ldots,\betabarra_g({\nu})$ (respectively, $\betabarra_0({\nu}), \betabarra_1({\nu}), \ldots,\betabarra_{g+1}({\nu})$) if $q=p_{r+1}$ is satellite (respectively, free) \cite{spiv}.


Now, taking into account that $\nu(\mathfrak{m}_r)=(1,0)$ and $\nu(\mathfrak{m}_{r+1})=(0,1)$, the proximity equalities (\ref{proximity}) and the Noether formula (\ref{Noether}), one  deduces that, if $q$ is satellite, then
$\betabarra_j({\nu})=(\betabarra_j(\nu_r),\betabarra_j(\nu_{\eta}))$  for all $j\in \{0, 1, \ldots,g\}$;  moreover, if $B$ stands for the line joining the origin with $\betabarra_0({\nu})$, it holds that $\betabarra_j({\nu})\in B$ for all $j<g$ and $\betabarra_g({\nu})\not\in B$ (see \cite{deganu}).

 Finally, if $q$ is free, one can deduce similarly that $\betabarra_j({\nu})=(\betabarra_j(\nu_r),0)$  for all $j\in \{0, 1, \ldots,g\}$ and $\betabarra_{g+1}(\nu)=(\betabarra_{g+1}(\nu_r),1)$. This concludes the proof by considering the properties of the maximal contact values of $\nu_r$ and the slopes of the lines bounding $\mathfrak{C}(\nu)$.
\end{proof}

We finish this section describing the Newton-Okounkov bodies of minimal  exceptional curve valuations $\nu$ and proving that this description characterizes this class of valuations. We will need a previous lemma, which is straightforward.

\begin{lemma}\label{cont}
The Newton-Okounkov body $\Delta_{\nu}$ is contained in the set $\mathfrak{C}(\nu)\cap \mathfrak{H}(\nu)$. Moreover, each side of the boundary of this set contains a vertex of $\Delta_{\nu}$.
\end{lemma}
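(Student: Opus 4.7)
The plan is to verify the two claims in turn, starting with the containment and then producing vertices of $\Delta_\nu$ on each of the three sides.

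For the containment, the key observation will be a Fekete-type argument. Any section $f \in H^0(X,\mathcal{O}_{\mathbb{P}^2}(m))\setminus\{0\}$ is (in the affine chart around $p$) a polynomial of degree at most $m$, so $f \in R$ and $\nu(f) \in S_\nu$. This already places $\nu(f)/m$ in the cone $\mathfrak{C}(\nu)$. For the first coordinate, I would note that $\nu_r(f) \leq \mu_m(\nu_r)$ by definition of $\mu_m$, and that the sequence $\{\mu_d(\nu_r)\}$ is super-additive: given polynomials realizing $\mu_{d_1}(\nu_r)$ and $\mu_{d_2}(\nu_r)$, their product has degree at most $d_1+d_2$ and $\nu_r$-value $\mu_{d_1}(\nu_r)+\mu_{d_2}(\nu_r)$. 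Fekete's lemma then gives $\mu_d(\nu_r)/d \leq \hat\mu(\nu_r)$ for every $d$, and hence $\nu_r(f)/m \leq \hat\mu(\nu_r)$, placing $\nu(f)/m$ in $\mathfrak{H}(\nu)$. Since $\mathfrak{C}(\nu)\cap\mathfrak{H}(\nu)$ is closed, the inclusion passes to the closure, giving $\Delta_\nu \subseteq \mathfrak{C}(\nu) \cap \mathfrak{H}(\nu)$.

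For the second statement, I would first describe the three sides of $\mathfrak{C}(\nu)\cap\mathfrak{H}(\nu)$: two extremal rays of the cone meeting at the origin, and one vertical segment on the line $x = \hat\mu(\nu_r)$. The origin belongs to $\Delta_\nu$—take $f \in H^0(\mathcal{O}_{\mathbb{P}^2}(1))$ a linear form not vanishing at $p$, so that $\nu(f) = (0,0)$—and since it is a corner of the containing triangle, it is automatically an extreme point of $\Delta_\nu$. This single vertex simultaneously lies on both sides meeting at the origin. To handle the vertical side, I would use the definition of $\hat\mu(\nu_r)$ as a limit: choose polynomials $f_d$ of degree at most $d$ with $\nu_r(f_d)/d \to \hat\mu(\nu_r)$, so the points $\nu(f_d)/d \in \Delta_\nu$ have first coordinate approaching $\hat\mu(\nu_r)$. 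By compactness of $\Delta_\nu$—now guaranteed by step one, since $\mathfrak{C}(\nu)\cap\mathfrak{H}(\nu)$ is bounded—a subsequence converges to a point $P \in \Delta_\nu$ with first coordinate exactly $\hat\mu(\nu_r)$. The intersection $\Delta_\nu \cap \{x = \hat\mu(\nu_r)\}$ is thus a nonempty compact segment, possibly degenerate, and its endpoints are vertices of $\Delta_\nu$ lying on the third side.

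The main technical obstacle is the super-additivity/Fekete step, which is what promotes the asymptotic definition of $\hat\mu(\nu_r)$ to a uniform upper bound valid for every single $m$; without it, one only knows that $\nu_r(f)/m \leq \hat\mu(\nu_r)$ asymptotically, which would not suffice to confine $\Delta_\nu$ inside $\mathfrak{H}(\nu)$. Everything else reduces to the cone structure of $S_\nu$, compactness, and elementary planar convex geometry.
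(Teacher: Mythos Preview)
Your argument is correct. The paper does not actually prove this lemma---it is stated with the remark that it ``is straightforward'' and no proof is given---so there is nothing to compare against directly. Your Fekete/super-additivity step is exactly the right way to upgrade the limit definition of $\hat\mu(\nu_r)$ to the pointwise bound $\mu_d(\nu_r)/d\le\hat\mu(\nu_r)$ needed for the containment in $\mathfrak{H}(\nu)$, and your treatment of the vertices (the origin as an extreme point inherited from the containing triangle, plus a supporting-line argument on $\{x=\hat\mu(\nu_r)\}$) is clean and sufficient. One minor remark: compactness of $\Delta_\nu$ is already part of the general theory of Newton--Okounkov bodies (as the paper notes earlier), so you do not need the containment in the triangle to invoke it, though your derivation of compactness from the containment is of course also valid.
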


\begin{theorem}
\label{nobminimal}
The Newton-Okounkov body $\Delta_\nu$ coincides with the triangle $\mathfrak{C}(\nu)\cap \mathfrak{H}(\nu)$ if and only if $\nu$ is minimal.
\end{theorem}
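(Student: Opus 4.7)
The plan is to compare $\mathrm{vol}_{\mathbb{R}^2}(\Delta_\nu)$ with the area of the triangle $T:=\mathfrak{C}(\nu)\cap\mathfrak{H}(\nu)$ described in Proposition~\ref{tri}. By formula~(\ref{dimension}) applied to $X$ and $H$, together with $\mathrm{vol}_X(H)=H^2=1$ (since $H$ is the pull-back of $\mathcal{O}_{\mathbb{P}^2}(1)$), one gets
\[
\mathrm{vol}_{\mathbb{R}^2}(\Delta_\nu)=\frac{\mathrm{vol}_X(H)}{2}=\frac{1}{2},
\]
and Lemma~\ref{cont} provides the inclusion $\Delta_\nu\subseteq T$.

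Next I would compute $\mathrm{area}(T)$ from the explicit vertices in Proposition~\ref{tri} and show that in both cases
\[
\mathrm{area}(T)=\frac{\hat{\mu}(\nu_r)^{2}\,\mathrm{vol}(\nu_r)}{2}.
\]
When $q$ is free, this is immediate: the triangle has base $\hat{\mu}(\nu_r)$ and height $\hat{\mu}(\nu_r)/\betabarra_{g+1}(\nu_r)$, and the identity $\betabarra_{g+1}(\nu_r)=1/\mathrm{vol}(\nu_r)$ finishes the computation. When $q$ is satellite, the determinant of the two upper vertices gives
\[
\mathrm{area}(T)=\frac{\hat{\mu}(\nu_r)^{2}}{2}\left|\frac{\betabarra_g(\nu_\eta)}{\betabarra_g(\nu_r)}-\frac{\betabarra_0(\nu_\eta)}{\betabarra_0(\nu_r)}\right|,
\]
so the problem reduces to the identity
\[
\bigl|\betabarra_g(\nu_\eta)\betabarra_0(\nu_r)-\betabarra_0(\nu_\eta)\betabarra_g(\nu_r)\bigr|=\frac{\betabarra_0(\nu_r)\betabarra_g(\nu_r)}{\betabarra_{g+1}(\nu_r)}.
\]
This is the combinatorial heart of the argument and should follow from the intersection-multiplicity computations of Proposition~\ref{felix} together with the Noether formula (\ref{Noether}) and the proximity equalities (\ref{proximity}), using that $E_r$ and $E_\eta$ are adjacent in the dual graph of $\nu_r$ with $p_{r+1}$ at their common intersection.

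With the area formula established, the definition of minimality (namely $\hat{\mu}(\nu_r)\geq 1/\sqrt{\mathrm{vol}(\nu_r)}$, with equality if and only if $\nu$ is minimal) yields $\mathrm{area}(T)\geq 1/2$, with equality exactly when $\nu$ is minimal. Since $\Delta_\nu$ and $T$ are convex compact sets with $\Delta_\nu\subseteq T$ and $\mathrm{vol}_{\mathbb{R}^2}(\Delta_\nu)=1/2$, the equality $\mathrm{area}(T)=1/2$ forces $\Delta_\nu=T$; conversely, if $\Delta_\nu=T$ then their areas coincide, hence $\nu$ must be minimal. The main obstacle is verifying the identity displayed above in the satellite case, which requires unpacking the arithmetic of maximal contact values of the two adjacent divisorial valuations $\nu_r$ and $\nu_\eta$; everything else is a clean volume comparison.
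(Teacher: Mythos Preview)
Your outline matches the paper's proof: compare areas using $\mathrm{vol}_{\mathbb{R}^2}(\Delta_\nu)=1/2$ from (\ref{dimension}), invoke Lemma~\ref{cont} for the inclusion, and reduce everything in the satellite case to the identity
\[
\left|\frac{\betabarra_g(\nu_\eta)}{\betabarra_g(\nu_r)}-\frac{\betabarra_0(\nu_\eta)}{\betabarra_0(\nu_r)}\right|^{-1}=\betabarra_{g+1}(\nu_r).
\]

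The only divergence is how that identity is established. The paper isolates it as Lemma~\ref{45} and proves it not via Proposition~\ref{felix} but through the continued-fraction description of Puiseux exponents. Using $\betabarra_0(\nu_r)/e_{g-1}(\nu_r)=\betabarra_0(\nu_\eta)/e_{g-1}(\nu_\eta)$ and $\betabarra_{g+1}(\nu_r)=e_{g-1}(\nu_r)\betabarra_g(\nu_r)$, the claim reduces to
\[
\bigl|e_{g-1}(\nu_\eta)\,\betabarra_g(\nu_r)-e_{g-1}(\nu_r)\,\betabarra_g(\nu_\eta)\bigr|=1,
\]
which by (\ref{Delta}) becomes $|\beta'_g(\nu_r)-\beta'_g(\nu_\eta)|=1/\bigl(e_{g-1}(\nu_r)e_{g-1}(\nu_\eta)\bigr)$. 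Since $E_\eta$ and $E_r$ are adjacent in the dual graph, the continued-fraction expansions of $\beta'_g(\nu_r)$ and $\beta'_g(\nu_\eta)$ differ by exactly one partial quotient, and the last equality is then the classical convergent identity $p_nq_{n-1}-p_{n-1}q_n=\pm1$. Your suggested route through Proposition~\ref{felix} and the Noether formula may also work, but the continued-fraction argument is shorter and is what the paper actually uses.
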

\begin{proof}
Proposition \ref{tri} proves  that the area of the triangle $\mathfrak{C}(\nu)\cap \mathfrak{H}(\nu)$ is $$\frac{\hat{\mu}(\nu_r)^2}{2}\left| \frac{\betabarra_g(\nu_{\eta})}{\betabarra_g(\nu_r)}- \frac{\betabarra_0(\nu_{\eta})}{\betabarra_0(\nu_r)}\right|$$ (respectively, $\hat{\mu}(\nu_r)^2\;/\;2\betabarra_{g+1}(\nu_r)$) if $q=p_{r+1}$ is satellite and $q \in E_r\cap E_{\eta}$ (respectively, $q$ is free).

The Newton-Okounkov body $\Delta_\nu$ is contained in $\mathfrak{C}(\nu)\cap \mathfrak{H}(\nu)$ by Lemma \ref{cont}. The next lemma shows that $\hat{\mu}(\nu_r)^2\;/\;2\betabarra_{g+1}(\nu_r)$ is also the area of $\mathfrak{C}(\nu)\cap \mathfrak{H}(\nu)$ in the satellite case and this concludes the proof because this area is larger than or equal to $1/2$ (the area of $\Delta_{\nu}$ by (\ref{dimension})), being equal to $1/2$ exactly when $\nu_r$ is minimal.
\end{proof}

\begin{lemma}
\label{45}
Keeping the above notation, if the point $p_{r+1}$ is satellite, then
$$\betabarra_{g+1}(\nu_r)=\left| \frac{\betabarra_g(\nu_{\eta})}{\betabarra_g(\nu_r)}- \frac{\betabarra_0(\nu_{\eta})}{\betabarra_0(\nu_r)}\right|^{-1}.$$
\end{lemma}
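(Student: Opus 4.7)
The identity is equivalent to
\[
\bar\beta_{g+1}(\nu_r) \cdot \bigl| \bar\beta_g(\nu_\eta)\bar\beta_0(\nu_r) - \bar\beta_g(\nu_r)\bar\beta_0(\nu_\eta) \bigr| = \bar\beta_0(\nu_r)\,\bar\beta_g(\nu_r),
\]
which is the form I would verify. The plan is to interpret each maximal contact value appearing above as an intersection multiplicity at $p$ of a pair of curvettes through the Noether formula (\ref{Noether}): concretely, $\bar\beta_0(\nu_r) = \nu_r(\mathfrak{m}_1)$, $\bar\beta_g(\nu_r) = (\varphi_{\ell_g}, \varphi_r)_p$, $\bar\beta_{g+1}(\nu_r) = (\varphi_r,\varphi_r)_p = \sum_{i=1}^r \nu_r(\mathfrak{m}_i)^2$, and analogously for $\nu_\eta$. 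Proposition \ref{felix} then supplies closed expressions for each of the resulting intersection multiplicities in terms of the invariants $e_j(\nu_r),e_j(\nu_\eta),\bar\beta_j(\nu_r)$ and $\bar\beta_j(\nu_\eta)$.

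The crucial geometric input is that, since $p_{r+1} \in E_r \cap E_\eta$ is satellite, the divisor $E_\eta$ is a neighbor of $E_r$ in $\Gamma_{\nu_r}$ and, in fact, both divisors belong to the $g$th Puiseux pair of $\nu_r$. This pins down the relationship between the two sequences of maximal contact values: $\bar\beta_j(\nu_\eta) = \bar\beta_j(\nu_r)$ for $0 \leq j < g$, whereas $\bar\beta_g(\nu_\eta)$ and $\bar\beta_g(\nu_r)$ are consecutive terms in the Euclidean step associated with the continued-fraction expansion $[m_0, \ldots, m_t]$ of $\beta'_g(\nu_r)$. The proximity equalities (\ref{proximity}) applied at the satellite point $p_{r+1}$ moreover yield the algebraic link
\[
\nu_{r+1}(\mathfrak{m}_i) = \nu_r(\mathfrak{m}_i) + \nu_\eta(\mathfrak{m}_i), \quad i = 1, \ldots, r,
\]
where $\nu_{r+1}$ denotes the divisorial valuation obtained by blowing up $p_{r+1}$; this encodes the satellite structure in a compact form that is convenient for the computation.

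With these ingredients the desired identity reduces to a Bezout-type relation
\[
\bar\beta_0(\nu_r)\bar\beta_g(\nu_\eta) - \bar\beta_g(\nu_r)\bar\beta_0(\nu_\eta) \;=\; \pm\, \frac{\bar\beta_0(\nu_r)\,\bar\beta_g(\nu_r)}{\bar\beta_{g+1}(\nu_r)},
\]
which I would derive from the classical fact that the determinant of two consecutive convergents of a continued fraction equals $\pm 1$, combined with the expression of $\bar\beta_{g+1}(\nu_r)$ as the sum of squares of multiplicities provided by Noether's formula. The main obstacle I expect is the case distinction according as $p_r$ itself is free or satellite, since both are compatible with $p_{r+1}$ being satellite but they affect the local shape of $\Gamma_{\nu_r}$ near $E_r$ and therefore which branch of Proposition \ref{felix} applies; particularly delicate is the degenerate sub-case $\ell_g = r$, in which $\bar\beta_g(\nu_r) = \bar\beta_{g+1}(\nu_r)$ and the bookkeeping of the Puiseux structure has to be adjusted.
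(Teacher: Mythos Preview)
Your overall strategy---reducing the identity to a $\pm 1$ determinant coming from consecutive convergents of a continued fraction---is the right one, and it is essentially what the paper does. However, there is a factual error that would derail the computation as written: it is \emph{not} true that $\bar\beta_j(\nu_\eta) = \bar\beta_j(\nu_r)$ for $0 \le j < g$. The curvettes $\varphi_r$ and $\varphi_\eta$ have different multiplicities at $p$, so $\bar\beta_0(\nu_r)=\nu_r(\mathfrak m_1)\neq \nu_\eta(\mathfrak m_1)=\bar\beta_0(\nu_\eta)$ in general. What does hold, because $\nu_r$ and $\nu_\eta$ share the first $g-1$ Puiseux pairs, is the proportionality
\[
\frac{\bar\beta_j(\nu_\eta)}{\bar\beta_j(\nu_r)} \;=\; \frac{e_{g-1}(\nu_\eta)}{e_{g-1}(\nu_r)} \qquad (0 \le j \le g-1),
\]
equivalently $\beta'_j(\nu_\eta)=\beta'_j(\nu_r)$ for $j<g$. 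Once you insert this correction, your ``Bezout-type relation'' becomes equivalent (using $\bar\beta_{g+1}(\nu_r)=e_{g-1}(\nu_r)\bar\beta_g(\nu_r)$) to
\[
\bigl|\,e_{g-1}(\nu_\eta)\,\bar\beta_g(\nu_r) - e_{g-1}(\nu_r)\,\bar\beta_g(\nu_\eta)\,\bigr| = 1,
\]
which is precisely the key identity the paper isolates and proves. The paper's argument is then short and direct: by (\ref{Delta}) one has
\[
\frac{\bar\beta_g(\nu_r)}{e_{g-1}(\nu_r)} - \frac{\bar\beta_g(\nu_\eta)}{e_{g-1}(\nu_\eta)} \;=\; \beta'_g(\nu_r)-\beta'_g(\nu_\eta),
\]
and since the continued-fraction expansions of $\beta'_g(\nu_r)$ and $\beta'_g(\nu_\eta)$ differ by exactly one partial quotient (visible from the Enriques diagram: $\sigma_g(\nu_r)=\sigma_g(\nu_\eta)+1$), the classical convergent identity gives $\beta'_g(\nu_r)-\beta'_g(\nu_\eta)=\pm 1/\bigl(e_{g-1}(\nu_r)e_{g-1}(\nu_\eta)\bigr)$.

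Your detour through Proposition~\ref{felix} and the proximity relation $\nu_{r+1}(\mathfrak m_i)=\nu_r(\mathfrak m_i)+\nu_\eta(\mathfrak m_i)$ is not needed here (the latter is correct but plays no role), and the anticipated case split on whether $p_r$ is free or satellite, including the sub-case $\ell_g=r$, disappears once you work with the $e_{g-1}$-normalized quantities as above.
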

\begin{proof}
With the notation of Subsection \ref{elflag},  consider the divisorial valuations $\nu_r$ and $\nu_{\eta}$.  We will prove that
\begin{equation}
\label{laclave}
|e_{g-1}(\nu_{\eta}) \betabarra_g(\nu_r) - e_{g-1}(\nu_r) \betabarra_g(\nu_{\eta})| =1,
\end{equation}
which allows us to conclude the result. Indeed, when $$e_{g-1}(\nu_{\eta}) \betabarra_g(\nu_r) - e_{g-1}(\nu_r) \betabarra_g(\nu_\eta)= 1,$$
it holds that
\[
\betabarra_0(\nu_\eta) \betabarra_g(\nu_r) - \betabarra_0(\nu_r) \betabarra_g(\nu_\eta) = \frac{e_{g-1}(\nu_{\eta}) \betabarra_0(\nu_r) }{e_{g-1}(\nu_r) } \betabarra_g(\nu_r) - \betabarra_0(\nu_r) \betabarra_g(\nu_\eta) =
\]
\[
\frac{1}{e_{g-1}(\nu_r)} \left(e_{g-1}(\nu_{\eta}) \betabarra_g(\nu_r) \betabarra_0(\nu_r) - \betabarra_0(\nu_r) e_{g-1}(\nu_r) \betabarra_g(\nu_\eta)\right) = \frac{\betabarra_0(\nu_r)}{e_{g-1}(\nu_r)}.
\]
This proves that
\[
\left(\frac{\betabarra_0(\nu_{\eta})}{\betabarra_0(\nu_r)} - \frac{\betabarra_g(\nu_{\eta})}{\betabarra_g(\nu_r)}\right)^{-1} = \frac{\betabarra_0(\nu_r) \betabarra_g(\nu_r)}{\betabarra_0(\nu_\eta) \betabarra_g(\nu_r) - \betabarra_0(\nu_r) \betabarra_g(\nu_\eta)}
\]
\[ = \frac{\betabarra_0(\nu_r) \betabarra_g(\nu_r) e_{g-1}(\nu_r)}{\betabarra_0(\nu_r)} = \betabarra_{g+1}(\nu_r).
\]
An analogous reasoning shows the case when $$e_{g-1}(\nu_{\eta}) \betabarra_g(\nu_r) - e_{g-1}(\nu_r) \betabarra_g(\nu_\eta)= -1.$$

We finish proving the equality (\ref{laclave}).
By (\ref{Delta}) we get
\[
\frac{\betabarra_g(\nu_i)}{e_{g-1}(\nu_i)} = (\beta'_g(\nu_i) -1) + \frac{n_{g-1}(\nu_i)}{e_{g-1}(\nu_i)} \betabarra_{g-1}(\nu_i),
\]
where $i$ is either $r$ or $\eta$. From the definition of the values $\betabarra_j(\nu_i)$, we get that
\[
\frac{n_{g-1}(\nu_r)}{e_{g-1}(\nu_r)} \betabarra_{g-1}(\nu_r) = \frac{n_{g-1}(\nu_{\eta})}{e_{g-1}(\nu_{\eta})} \betabarra_{g-1}(\nu_{\eta}).
\]
As a consequence,
\begin{equation}\label{Chebichev1}
\frac{\betabarra_g(\nu_r)}{e_{g-1}(\nu_r)} - \frac{\betabarra_g(\nu_{\eta})}{e_{g-1}(\nu_{\eta})} = \beta'_g(\nu_r)-\beta'_g(\nu_{\eta}).
\end{equation}
With the help of the Enriques diagram of $\nu_r$ and the explanations given in the paragraphs before Example \ref{enric}, it is easily checked that, with the notations in those paragraphs,  $\sigma_g(\nu_r)=\sigma_g(\nu_{\eta})+1$.
This concludes the proof because of the equality
\begin{equation}\label{Chebichev2}
\beta'_g(\nu_r)-\beta'_g(\nu_{\eta}) = \frac{(-1)^{\sigma_g(\nu_{\eta})}}{e_{g-1}(\nu_r) e_{g-1}(\nu_{\eta})},
\end{equation}
which can be deduced from \cite[Theorem 7.5]{niven} and the equality above. \end{proof}

\subsection{The Newton-Okounkov body of a non-minimal exceptional curve valuation}

We have just found the vertices of the Newton-Okounkov body of a minimal exceptional curve valuation. This subsection is devoted to the description of the non-minimal case.
Let us start by proving the existence of supraminimal curves for divisorial valuations. This fact was proved in \cite{d-h-k-r-s} for the case when $\nu_r$ has only one Puiseux pair and it is defined by a satellite divisor.

\begin{lemma}\label{supra}
Let $\nu$ be a divisorial valuation of $F$ centered at $R$ and assume the existence of an irreducible polynomial $f\in \mathbb{C}[u,v]$ such that $\nu(f)>\sqrt{\betabarra_{g+1}(\nu)}\deg(f)$.  Then
\[
\frac{\nu(f)}{\deg (f)} = \hat{\mu}(\nu).
\]
Moreover, if $\nu$ is not a minimal valuation then there exists such an irreducible polynomial $f$ and it is the unique irreducible polynomial (up to product by a non-zero constant) satisfying the above condition.
\end{lemma}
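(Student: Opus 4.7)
The plan is to split the argument into three parts: an immediate upper bound, a central intersection inequality, and the formal deductions from it. First, I would observe that $\nu(f)/\deg(f) \le \hat{\mu}(\nu)$ for every nonzero $f\in\mathbb{C}[u,v]$, without any hypothesis. Applying the definition of $\hat{\mu}(\nu)$ to the sequence of powers $\{f^k\}_{k\ge 1}$ gives $\mu_{k\deg(f)}(\nu)\ge\nu(f^k)=k\nu(f)$, hence $\hat{\mu}(\nu)\ge\nu(f)/\deg(f)$; only the reverse direction is nontrivial.

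The heart of the proof is the following intersection inequality: for coprime $f,g\in\mathbb{C}[u,v]$ (equivalently, sharing no analytic branch at $p$),
\[
\nu(f)\,\nu(g)\;\le\;\betabarra_{g+1}(\nu)\,\deg(f)\,\deg(g).
\]
I would prove this by working on $X_n$, where $\nu=\nu_n$ is realized. Introduce the element $E^{\vee}$ of the exceptional lattice determined by $E^{\vee}\cdot E_j=\delta_{jn}$; a direct computation using the identity $\betabarra_{g+1}(\nu)=\mathrm{vol}(\nu)^{-1}$ recalled in the paper shows $(E^{\vee})^2=-\betabarra_{g+1}(\nu)$ and $\widetilde{C}_h\cdot E^{\vee}=-\nu(h)$ for every polynomial $h$. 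Decomposing $\widetilde{C}_f$ and $\widetilde{C}_g$ along $\pi^{*}H$, $E^{\vee}$, and the orthogonal complement $\langle\pi^{*}H,E^{\vee}\rangle^{\perp}$ (negative definite by the Hodge index theorem), then combining the effectivity $\widetilde{C}_f\cdot\widetilde{C}_g\ge 0$ with Bezout $\widetilde{C}_f\cdot\widetilde{C}_g=\deg(f)\deg(g)-(f,g)_p$ should yield the bound.

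With the inequality in hand, the conclusions are routine. For uniqueness, two non-proportional irreducible polynomials $f,h$ each with $\nu(\cdot)/\deg(\cdot)>\sqrt{\betabarra_{g+1}(\nu)}$ would be coprime, so the inequality would force $\nu(f)\nu(h)/(\deg(f)\deg(h))\le\betabarra_{g+1}(\nu)$, contradicting the strict bound on each factor. For the equality $\nu(f)/\deg(f)=\hat{\mu}(\nu)$: if it failed, some polynomial $h_d$ of degree $d$ would satisfy $\nu(h_d)/d>\nu(f)/\deg(f)$, and since the ratio $\nu/\deg$ of a product of irreducibles is a weighted average of the ratios of its factors, some irreducible factor of $h_d$ would itself be supraminimal and non-proportional to $f$, violating uniqueness. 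Existence when $\nu$ is not minimal follows by applying the same weighted-average observation to polynomials realizing $\mu_d(\nu)$ for $d$ large, whose ratios $\nu(h_d)/d$ exceed $\sqrt{\betabarra_{g+1}(\nu)}$. The main obstacle will be the careful exploitation of the negative-definite geometry of $\langle\pi^{*}H,E^{\vee}\rangle^{\perp}$ needed to extract the intersection inequality cleanly from the decomposition.
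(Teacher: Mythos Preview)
Your overall architecture is sound and close in spirit to the paper's: reduce everything to a single ``coprime intersection bound'', then deduce uniqueness, the equality $\nu(f)/\deg(f)=\hat\mu(\nu)$, and existence by weighted-average arguments. Those deductions are fine. The gap is exactly where you flag it: the Hodge-index route to the inequality
\[
\nu(f)\,\nu(g)\le\bar\beta_{g+1}(\nu)\,\deg(f)\,\deg(g)\qquad(f,g\text{ coprime})
\]
does not close. Writing $\tilde C_f=\deg(f)\,\pi^*H+\tfrac{\nu(f)}{\bar\beta_{g+1}}E^{\vee}+R_f$ with $R_f\in\langle\pi^*H,E^{\vee}\rangle^{\perp}$, one gets
\[
\tilde C_f\cdot\tilde C_g=\deg(f)\deg(g)-\frac{\nu(f)\nu(g)}{\bar\beta_{g+1}}+R_f\cdot R_g,
\]
and negative definiteness of the orthogonal complement controls $R_f^2$ and $R_g^2$ but \emph{not the sign of} $R_f\cdot R_g$: in a negative-definite space two vectors can pair positively. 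So $\tilde C_f\cdot\tilde C_g\ge 0$ alone does not force $\deg(f)\deg(g)-\nu(f)\nu(g)/\bar\beta_{g+1}\ge 0$. (Incidentally, your ``B\'ezout'' identity $\tilde C_f\cdot\tilde C_g=\deg(f)\deg(g)-(f,g)_p$ is also off: on $X_n$ one only subtracts $\sum_{i=1}^n m_i(f)m_i(g)$, not the full local intersection number at $p$.)

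The paper avoids this obstruction by an ideal-theoretic/semicontinuity argument rather than Hodge index. It shows directly that $(h,f)_p\ge \nu(h)\nu(f)/\bar\beta_{g+1}$ for \emph{all} $h$, as follows: for $k$ divisible by $\nu(h)$ and $\bar\beta_{g+1}$, the complete ideal $\mathcal P_k=\{\varphi:\nu(\varphi)\ge k\}$ has general element with multiplicities exactly $\tfrac{k}{\bar\beta_{g+1}}\nu(\mathfrak m_i)$; by lower semicontinuity the minimum of $(\psi,f)_p$ over $\psi\in\mathcal P_k$ is achieved on such general elements, giving $(h^{k/\nu(h)},f)_p\ge \tfrac{k}{\bar\beta_{g+1}}\nu(f)$. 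Combined with B\'ezout this yields your inequality, and in fact the sharper statement that any $h$ with $\nu(h)>\sqrt{\bar\beta_{g+1}}\deg(h)$ must contain $f$ as a factor. If you want to salvage a lattice-theoretic proof, the missing ingredient is precisely this comparison against the ``generic'' direction $(\nu(\mathfrak m_i))_i$, which is what replaces the uncontrollable cross term $R_f\cdot R_g$.
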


\begin{proof}
To show the first part of the statement, suppose the existence of the mentioned polynomial $f$. Then it is enough to prove the following assertion: if $h\in \mathbb{C}[u,v]$ satisfies that $\nu(h)>\sqrt{\betabarra_{g+1}(\nu)}\deg(h)$, then $f$ is a component of $h$. Indeed,  if the assertion is true, then $\nu(f) = \deg(f) \hat{\mu}(\nu)$, since otherwise there would exist $h \in \mathbb{C}[u,v]$ such that
\[
\sqrt{\betabarra_{g+1}(\nu)} < \frac{\nu(f)}{\deg(f)} < \frac{\nu(h)}{\deg(h)}
\]
and therefore $\nu(h) > \sqrt{\betabarra_{g+1}(\nu)} \deg(h)$. By the assertion, $h= f^a h_1$, where $a>0$ and $h_1 \in \mathbb{C}[u,v]$ such that $f$ does not divide $h_1$. Then
\[
\frac{\nu(h)}{\deg(h)} = \frac{a \nu(f) + \nu(h_1)}{a \deg(f) + \deg{h_1}} > \frac{\nu(f)}{\deg(f)},
\]
which, again by the assertion would imply that $f$ divides $h_1$, a contradiction.

Now we are going to prove the assertion. Let $k\in \mathbb{N}$ be any common multiple of $\nu(h)$ and $\betabarra_{g+1}(\nu)$. Unloading procedure (see \cite{C}) shows that the ideal ${\mathcal P}_{k}:=\{\varphi\in R\mid \nu(\varphi)\geq k\}$ is equal to $$\pi_*\left({\mathcal O}_{X}(-\frac{k}{\betabarra_{g+1}(\nu)}\sum_{i=1}^r m_i E_i^*) \right),$$ where $m_i=\nu(\mathfrak{m}_i)$ for all $i$, and $E_i^*$ stands for the total transform on $X$ of the divisor $E_i$. Moreover, since the divisor $-\sum_{i=1}^r m_i E_i^*$ is nef, there exists a non-empty Zariski subset $U$ of ${\mathcal P}_k$ such that ${\rm mult}_{p_i}(\varphi)= \frac{k}{\betabarra_{g+1}(\nu)} m_i$, for all $i=1, 2, \ldots,r$ and for all $\varphi\in U$ (see page 60 and Corollary 4.2.4 of \cite{C}). By \cite[Lemma 7.2.1]{C} we can also assume that $(\varphi,f)_p=w$ for all $\varphi\in U$, where $w:=\min \{ (\psi,f)_p\mid \psi\in {\mathcal P}_{k}\}$. Therefore
$$(h^{k/\nu(h)},f)_p \geq w\geq \sum_{i=1}^r \frac{k}{\betabarra_{g+1}(\nu)} m_i\cdot {\rm mult}_{p_i}(f) =\frac{k}{\betabarra_{g+1}(\nu)}\nu(f)>\frac{k}{\sqrt{\betabarra_{g+1}(\nu)}}\deg(f).$$
Hence
$$(h,f)_p>\frac{\nu(h)}{\sqrt{\betabarra_{g+1}}}\deg(f)>\deg(f)\deg(h)$$
and we conclude that $f$ is a component of $h$.

Finally, assume that $\nu$ is not minimal. Then, there exists a polynomial $h\in \mathbb{C}[u,v]$ such that $\nu(h)>\sqrt{\betabarra_{g+1}}\deg(h)$ and, therefore, the fact that the valuation of a product is the sum of valuations of the factors shows that, at least an irreducible component $f$ of $h$, satisfies  $\nu(f)>\sqrt{\betabarra_{g+1}}\deg(f)$. This proves the existence of the irreducible polynomial satisfying the condition given in the statement. Its uniqueness is a straightforward consequence of the assertion at the beginning of the proof.

\end{proof}

\begin{definition}
{\rm Let $\nu$ be a divisorial valuation that is not minimal. Then a  curve $C$ defined by an irreducible polynomial $f\in \mathbb{C}[u,v]$ such that $\nu(f) / \deg (f) = \hat{\mu}(\nu)$ is called \emph{supraminimal}. Notice that we have just proved that this curve is unique.
}
\end{definition}

Next, preserving the notation of Subsection \ref{elflag}, we will describe the Newton-Okounkov bodies of the flags $E_\bullet$ (i.e., of exceptional curve valuations $\nu$).
\begin{theorem}
\label{53}
Let $\nu$ be a non-minimal exceptional curve  valuation, let $g$ be the number of Puiseux pairs of the divisorial valuation of $\nu_r$ attached to $\nu$, and let $C$ be the supraminimal curve of $\nu_r$. Moreover, assume that $C$ is defined by $f=0$ and write $\nu(f)/\deg (f) = \left(\hat{\mu} (\nu_r), c \right)$. Then, the Newton-Okounkov body $\Delta_{\nu}$ is the convex hull of the set $\{(0,0), Q_1, Q_2, Q_3\}$, where the coordinates of the points $Q_i$ are as follows:
\begin{itemize}
\item[(a)] If $q= p_{r+1}\in E_r\cap E_{\eta}$ is a satellite point, then
\[ Q_1=\frac{1}{\hat{\mu}(\nu_r)}(\betabarra_{g+1}(\nu_r), \nu_{r}(\varphi_{\eta})), \; \; Q_2=\frac{1}{\hat{\mu}(\nu_r)}(\betabarra_{g+1}(\nu_r), \nu_{r}(\varphi_{\eta})+1)\] and
$Q_3=(\hat{\mu}(\nu_r),c)$.
\item[(b)] Otherwise (i.e., if $p_{r+1}$ is free), we have
    \[
    Q_1=\frac{1}{\hat{\mu}(\nu_r)}(\betabarra_{g+1}(\nu_r), 0), \;\; Q_2=\frac{1}{\hat{\mu}(\nu_r)}(\betabarra_{g+1}(\nu_r),1) \; {\mbox and } \; Q_3=(\hat{\mu}(\nu_r),c).
    \]
\end{itemize}
Therefore $\Delta_{\nu}$ is either a triangle or a quadrilateral.
\end{theorem}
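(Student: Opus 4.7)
My strategy is to apply Theorem~6.4 of Lazarsfeld--Musta{\c t}{\u a}~\cite{LM}, which describes $\Delta_{\nu}$ on the surface $X_r$ via the Zariski decomposition $H - tE_r = P_t + N_t$: the body equals
\[
\{(t,y) : 0 \le t \le \hat{\mu}(\nu_r),\ \alpha(t) \le y \le \beta(t)\}, \quad \alpha(t) = \mathrm{ord}_q(N_t|_{E_r}),\ \beta(t) = \alpha(t) + P_t \cdot E_r.
\]
Two inclusions come for free: $(0,0) \in \Delta_{\nu}$ trivially, and $Q_3 = (\hat{\mu}(\nu_r), c) \in \Delta_{\nu}$ by evaluating $\nu(f)/\deg(f)$ on the supraminimal polynomial $f$ supplied by Lemma~\ref{supra}. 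The uniqueness clause in Lemma~\ref{supra} also forces the slice of $\Delta_{\nu}$ over $\{t = \hat{\mu}(\nu_r)\}$ to collapse to the single point $Q_3$, accounting for the rightmost vertex.

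The substantive step is to locate the interior breakpoint of $(\alpha, \beta)$ at $t^{\ast} := \betabarra_{g+1}(\nu_r)/\hat{\mu}(\nu_r)$ and compute $\alpha(t^{\ast})$ and $\beta(t^{\ast})$. I would analyze the Zariski chambers swept by $H - tE_r$ as $t$ runs in $[0, \hat{\mu}(\nu_r)]$: on $[0, t^{\ast}]$ the negative part $N_t$ is supported on exceptional divisors proximate to $p_r$ in a pattern dictated by the dual graph $\Gamma_{\nu_r}$, with coefficients determined by the orthogonality conditions $P_t \cdot E_i = 0$; at $t = t^{\ast}$ the strict transform $\tilde{C}$ of the supraminimal curve becomes orthogonal to the positive part, and for $t \in [t^{\ast}, \hat{\mu}(\nu_r)]$ the divisor $\tilde C$ enters $N_t$ with linearly growing coefficient, pinching the slice down to $Q_3$ at the right endpoint. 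Using Proposition~\ref{felix} to compute the intersection numbers between the curvettes and exceptional divisors entering $N_{t^{\ast}}$, one obtains $\alpha(t^{\ast})$ and $\beta(t^{\ast})$ equal to the $y$-coordinates of $Q_1$ and $Q_2$; in the satellite case the extra divisor $E_{\eta}$ appears in the support of $N_{t^{\ast}}$ and contributes the term $\nu_r(\varphi_{\eta})/\hat{\mu}(\nu_r)$ to $\alpha(t^{\ast})$, whereas in the free case this term vanishes and $\alpha(t^{\ast}) = 0$.

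Once $Q_1, Q_2, Q_3 \in \Delta_{\nu}$ are established, convexity gives $\mathrm{conv}\{(0,0), Q_1, Q_2, Q_3\} \subseteq \Delta_{\nu}$. A shoelace computation with the cyclic ordering $(0,0), Q_1, Q_3, Q_2$ yields area
\[
\tfrac{1}{2}\big( t^{\ast} c + \hat{\mu}(\nu_r) \cdot \tfrac{1}{\hat{\mu}(\nu_r)} - t^{\ast} c \big) = \tfrac{1}{2},
\]
uniformly in both the satellite and free cases and independent of the specific value of $\alpha(t^{\ast})$. Matching $\mathrm{vol}_{\mathbb{R}^2}(\Delta_{\nu}) = 1/2$ from \eqref{dimension} forces the inclusion to be an equality, and the last sentence of the theorem follows by observing that the four listed points are always the extreme points, which degenerate to a triangle exactly when three of them are collinear (e.g.\ $Q_2$ on the segment $\overline{(0,0)\,Q_3}$). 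The main obstacle is the chamber-by-chamber Zariski analysis on $[0, t^{\ast}]$: identifying which exceptional divisors enter $N_t$ and with what coefficients requires careful bookkeeping through the proximity relations encoded in $\Gamma_{\nu_r}$, and the satellite case adds the complication of tracking $E_{\eta}$ together with $E_r$ so as to produce the precise contribution $\nu_r(\varphi_{\eta})/\hat{\mu}(\nu_r)$ to $\alpha(t^{\ast})$.
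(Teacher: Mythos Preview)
Your approach is essentially the paper's: invoke \cite[Theorem~6.4]{LM}, show $(0,0),Q_1,Q_2,Q_3\in\Delta_\nu$, and finish with the area $=\tfrac12$ argument. But you are making the middle step much harder than it needs to be.

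The paper does \emph{not} carry out a chamber-by-chamber analysis of $H-tE_r$; it computes the Zariski decomposition only at the single value $t_0:=\betabarra_{g+1}(\nu_r)/\hat\mu(\nu_r)$, and it does so by writing down an explicit candidate and verifying it. Namely,
\[
P_{t_0}=H-\frac{1}{\hat\mu(\nu_r)}\sum_{i=1}^{r}\nu_r(\varphi_i)E_i
=H-\frac{1}{\hat\mu(\nu_r)}\sum_{i=1}^{r}\nu_r(\mathfrak m_i)E_i^*,\qquad
N_{t_0}=\frac{1}{\hat\mu(\nu_r)}\sum_{i=1}^{r-1}\nu_r(\varphi_i)E_i.
\]
Nefness of $P_{t_0}$ follows immediately from the Noether formula (against strict transforms of plane curves) and the proximity equalities (against each $E_i$, $i<r$); no curvette intersection numbers via Proposition~\ref{felix} are needed. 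From this explicit $N_{t_0}$ one reads off $\alpha(t_0)$ directly: only the component through $q$ matters, so $\alpha(t_0)=\nu_r(\varphi_\eta)/\hat\mu(\nu_r)$ in the satellite case and $\alpha(t_0)=0$ in the free case; and $\beta(t_0)=\alpha(t_0)+P_{t_0}\cdot E_r=\alpha(t_0)+1/\hat\mu(\nu_r)$.

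Two points where your plan would mislead you: first, the support of $N_{t_0}$ is \emph{all} of $E_1,\ldots,E_{r-1}$, not merely the divisors proximate to $p_r$; your description of the support is inaccurate and would send you chasing the wrong orthogonality conditions. Second, the collapse of the right slice to $\{Q_3\}$ and the analysis on $[t^*,\hat\mu(\nu_r)]$ are unnecessary: once $(0,0),Q_1,Q_2,Q_3\in\Delta_\nu$ and the convex hull already has area $\tfrac12$, equality is forced by \eqref{dimension}. Drop the chamber sweep and write down $P_{t_0},N_{t_0}$ directly.
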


\begin{proof}
For a start, we notice that the existence of the supraminimal curve $C$ is a consequence of Lemma \ref{supra} and therefore $Q_3\in \Delta_{\nu}$.

By Theorem 6.4 of \cite{LM} and preserving notation, it holds that
$$\Delta_{\nu}=\{(t,y)\in \mathbb{R}^2\mid 0\leq t\leq \mu\;\mbox{ and }\; \alpha(t)\leq y\leq \beta(t)\},$$
where $\mu:=\sup \{s>0\mid H-sE_r\; \mbox{is big}\}$ and, for all $t\in [0,\mu]$, it is $\alpha(t):={\rm ord}_{p_{r+1}}(N_t\mid_{{E}_r})$, $\beta(t):=\alpha(t)+P_t\cdot {E}_r$; here $P_t+N_t$ stands for the Zariski decomposition of the $\mathbb{Q}$-divisor $H-tE_r$. Notice that $\mu=\hat{\mu}(\nu_r)$ because the big cone and the cone of curves of $X$ have the same closure.

Set $$t_0:=\frac{\betabarra_{g+1}(\nu_r)}{\hat{\mu}(\nu_r)}.$$
The Zariski decomposition of the $\mathbb{Q}$-divisor $H-t_0E_r$ is $P_{t_0}+N_{t_0}$, where $$P_{t_0}=H- \frac{1}{\hat{\mu}(\nu_r)}\sum_{i=1}^r \nu_r(\varphi_i)E_i$$ and $$N_{t_0}=\frac{1}{\hat{\mu}(\nu_r)}\sum_{i=1}^{r-1} \nu_r(\varphi_i)E_i.$$
Notice that $P_{t_0}=H-\frac{1}{\hat{\mu}(\nu_r)}\sum_{i=1}^r \nu_r(\mathfrak{m}_i) {E}^*_i$, hence it is nef. Indeed, on the one hand, if $D$ is the strict transform on $X_r$ of a curve of $\mathbb{P}^2$ defined by a polynomial $h\in \mathbb{C}[u,v]$ then $$P_{t_0}\cdot D=\deg(h)-\frac{1}{\hat{\mu}(\nu_r)} \sum_{i=1}^r \nu_r(\mathfrak{m}_i)\cdot {\rm mult}_{p_i}(h)=\deg(h)-\frac{1}{\hat{\mu}(\nu_r)} \cdot \nu_r(h)\geq 0,$$ where the second equality is a consequence of the Noether formula (\ref{Noether}); on the other hand $P_{t_0}\cdot E_i= 0$ for all $i=1, 2, \ldots,r-1$ by the proximity equalities (\ref{proximity}).

Now, when the point $p_{r+1}$ is satellite (respectively, free), it holds that
\[
\alpha(t_0)=\frac{1}{\hat{\mu}(\nu_r)}\nu_{r}(\varphi_{\eta})=\frac{1}{\hat{\mu}(\nu_r)}\nu_{\eta}(\varphi_r) \;\; \mbox{ (respectively, $\alpha(t_0)=0$)}
\]
 and $$
 \beta(t_0)=\alpha(t_0)+P_{t_0}\cdot E_r=\alpha(t_0)+\frac{1}{\hat{\mu}(\nu_r)}=
 \frac{1}{\hat{\mu}(\nu_r)}(\nu_{\eta}(\varphi_r)+1)$$ (respectively,
$\beta(t_0)=P_{t_0}\cdot E_r=\frac{1}{\hat{\mu}(\nu_r)}$).

Therefore, we have proved that $Q_1,Q_2\in \Delta_{\nu}$. This concludes the proof since the area of the convex hull of $\{(0,0),Q_1,Q_2,Q_3\}$ is $1/2$.
\end{proof}

To conclude this section, for every exceptional curve valuation $\nu$ we determine an equivalent valuation $\nu'$ whose Newton-Okounkov body has a side on the $X$-axis. First, we will need the following result.

\begin{lemma}
\label{el412}
Let $\nu$ be an exceptional curve valuation. Let $g^*$ (respectively, $g$) be the number of Puiseux pairs of $\nu$ (respectively, $\nu_r$). Then there exists a unique valuation $\nu'$ equivalent to $\nu$ with values in $\mathbb{R}^2$ such that $\betabarra_0(\nu')=(1,0)$ and $\betabarra_{g^*}(\nu')=(\betabarra_{g^*}(\nu_r)/\betabarra_0(\nu_r),1)$. Moreover it holds that
\begin{itemize}
\item[(a)] if $p_{r+1}$ is satellite, then
$\nu'(\mathfrak{m}_r)=(1/\betabarra_0(\nu_r), (-1)^{a-1} e_{g-1}(\nu_{\eta}))$ and $\nu'(\mathfrak{m}_{r+1})=(0, (-1)^{a} e_{g-1}(\nu_r))$,
where $a=1$ (respectively, $a=2$) if $\eta \preccurlyeq r$ (respectively, $\eta \not\preccurlyeq r$).
\item[(b)] if $p_{r+1}$ is free, then
$\nu'(\mathfrak{m}_r)=(1/\betabarra_0(\nu_r), 0)$ and $\nu'(\mathfrak{m}_{r+1})=(0, 1)$.
\end{itemize}
\end{lemma}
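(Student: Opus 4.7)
The plan is to invoke Remark~\ref{equivalentinf} to parametrize the valuations equivalent to $\nu$ with values in $\mathbb{R}^2$ by linear automorphisms $\psi\colon\mathbb{R}^2\to\mathbb{R}^2$, and then to exhibit the unique such $\psi$ that sends $\betabarra_0(\nu)$ and $\betabarra_{g^*}(\nu)$ to the prescribed vectors $(1,0)$ and $(\betabarra_{g^*}(\nu_r)/\betabarra_0(\nu_r),1)$. Existence and uniqueness of $\nu'$ will then follow from the linear independence of the two source vectors (a linear automorphism being determined by its values on a basis), and the formulas for $\nu'(\mathfrak{m}_r)$ and $\nu'(\mathfrak{m}_{r+1})$ will be obtained by applying $\psi$ to $(1,0)$ and $(0,1)$, using the standing normalization $\nu(\mathfrak{m}_r)=(1,0)$ and $\nu(\mathfrak{m}_{r+1})=(0,1)$ in force throughout this section.

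First I would read off the coordinates of $\betabarra_0(\nu)$ and $\betabarra_{g^*}(\nu)$ from the proof of Proposition~\ref{tri}: in the free case one has $g^*=g+1$, $\betabarra_0(\nu)=(\betabarra_0(\nu_r),0)$ and $\betabarra_{g+1}(\nu)=(\betabarra_{g+1}(\nu_r),1)$; in the satellite case one has $g^*=g$ and $\betabarra_j(\nu)=(\betabarra_j(\nu_r),\betabarra_j(\nu_\eta))$ for all $j\in\{0,\ldots,g\}$. Linear independence is immediate in the free case (the determinant is $\betabarra_0(\nu_r)$), and in the satellite case it follows from equation~(\ref{laclave}) in the proof of Lemma~\ref{45}, which forces the determinant to equal $\pm\betabarra_0(\nu_r)/e_{g-1}(\nu_r)\neq 0$.

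In the free case, $\psi$ is trivially $(x,y)\mapsto(x/\betabarra_0(\nu_r),y)$, so $\psi(1,0)=(1/\betabarra_0(\nu_r),0)$ and $\psi(0,1)=(0,1)$, which proves part~(b). In the satellite case, the first coordinate of $\psi$ is still $x\mapsto x/\betabarra_0(\nu_r)$ (since both prescribed first components are compatible with this projection), while the second coordinate $(x,y)\mapsto cx+dy$ is determined by a $2\times 2$ linear system whose solution, after substituting the determinant formula from the proof of Lemma~\ref{45} and using the structural identity $\betabarra_0(\nu_\eta)/e_{g-1}(\nu_\eta)=\betabarra_0(\nu_r)/e_{g-1}(\nu_r)$ (which reflects the fact that $\nu_r$ and $\nu_\eta$ share the same first $g-1$ Puiseux pairs, so that the products $\prod_{j=1}^{g-1}n_j(\cdot)$ agree), yields second coordinates $\pm e_{g-1}(\nu_\eta)$ for $\psi(1,0)$ and $\mp e_{g-1}(\nu_r)$ for $\psi(0,1)$.

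The main obstacle is pinning down the sign $(-1)^{a-1}$. Its origin is the Chebyshev-type identity~(\ref{Chebichev2}) in the proof of Lemma~\ref{45}, which contributes the factor $(-1)^{\sigma_g(\nu_\eta)}$ to the determinant. The task is then to show that $\sigma_g(\nu_\eta)$ is even if and only if $\eta\preccurlyeq r$. This is a purely combinatorial statement relating the number of maximal smooth paths in the $g$th Puiseux pair of $\mathcal{E}_{\nu_\eta}$ to the relative position of $E_\eta$ and $E_r$ in the dual graph; I would settle it by comparing, at the star vertex $p_{st_g}$ of $\mathcal{E}_{\nu_r}$, which of the two orthogonal half-lines emanating from $p_{st_g}$ contains $p_r$ versus $p_\eta$, and invoking the continued-fraction description of Puiseux exponents recalled in Subsection~\ref{Puiseux}.
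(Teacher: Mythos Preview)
Your proposal is correct and follows essentially the same route as the paper: parametrize equivalent valuations by linear automorphisms via Remark~\ref{equivalentinf}, read off the coordinates of $\betabarra_0(\nu)$ and $\betabarra_{g^*}(\nu)$ from the proof of Proposition~\ref{tri}, and solve for the unique $\psi$. The paper writes $\psi$ explicitly as right multiplication by a $2\times 2$ matrix $A$ and then simplifies the entries using the same identities you invoke, namely (\ref{Chebichev1}), (\ref{Chebichev2}) and $\betabarra_0(\nu_r)/e_{g-1}(\nu_r)=\betabarra_0(\nu_\eta)/e_{g-1}(\nu_\eta)$.

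The only substantive difference concerns the sign. The paper does not re-derive the parity of $\sigma_g(\nu_\eta)$ from scratch; instead it observes that (\ref{Chebichev1})--(\ref{Chebichev2}) give $\beta'_g(\nu_r)-\beta'_g(\nu_\eta)=(-1)^{\sigma_g(\nu_\eta)}/(e_{g-1}(\nu_r)e_{g-1}(\nu_\eta))$ and then cites Proposition~\ref{felix}(b), whose last clause says precisely that $\beta'_g(\nu_\eta)\le\beta'_g(\nu_r)$ if and only if $r\not\preccurlyeq\eta$ (equivalently $\eta\preccurlyeq r$, since the two vertices are adjacent). This is cleaner than your proposed direct Enriques-diagram argument and avoids a small imprecision in your sketch: the relevant comparison is not at the star vertex $p_{st_g}$ but along the zigzag of the $g$th Puiseux pair where $E_r$ and $E_\eta$ sit as adjacent vertices. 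Since Proposition~\ref{felix}(b) is already available and was itself proved via continued fractions and the Enriques diagram, you may as well invoke it.
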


\begin{proof}
First of all, notice that $g^*=g$ (respectively, $g^*=g+1$) when $p_{r+1}$ is satellite (respectively, free).

If $p_{r+1}$ is satellite then $\betabarra_j({\nu})=(\betabarra_j(\nu_r),\betabarra_j(\nu_{\eta}))$ for $j\in \{0,g\}$ (see the proof of Proposition \ref{tri}) and  straightforward computations show that the unique linear automorphism $\psi: \mathbb{R}^2\rightarrow \mathbb{R}^2$ such that $\psi(\betabarra_0(\nu))=(1,0)$ and $\psi(\betabarra_{g^*}(\nu))=(\betabarra_{g^*}(\nu_r)/\betabarra_0(\nu_r),1)$  is defined by $x\mapsto xA$, where $A$ is the matrix
$$\begin{pmatrix} \frac{1}{\betabarra_0(\nu_r)} & \frac{\betabarra_0(\nu_{\eta})}{\delta} \\
0 & \frac{-\betabarra_0(\nu_{r})}{\delta}
 \end{pmatrix},
$$
where $\delta$ is $\betabarra_g(\nu_{r})\betabarra_0(\nu_{\eta})-\betabarra_g(\nu_{\eta})
\betabarra_0(\nu_{r})$. Therefore, the valuation $\nu'=\psi\circ \nu$ is the unique valuation equivalent to $\nu$ satisfying the required conditions (see Remark \ref{equivalentinf}).

Moreover $$\nu'(\mathfrak{m}_r)=\left(\frac{1}{\betabarra_0(\nu_r)},
\frac{\betabarra_0(\nu_{\eta})}{\delta}\right) \; \mbox{and} \; \nu'(\mathfrak{m}_{r+1})=\left(0,\frac{-\betabarra_0(\nu_{r})}{\delta}\right).$$ Notice that
$$\delta=e_{g-1}(\nu_r)\betabarra_0(\nu_{\eta})\left(\frac{\betabarra_g(\nu_{r})}{e_{g-1}(\nu_r)}-\frac{\betabarra_g(\nu_{\eta})}{e_{g-1}(\nu_{\eta})}\right)=e_{g-1}(\nu_{\eta})\betabarra_0(\nu_{r})\left(\frac{\betabarra_g(\nu_{r})}{e_{g-1}(\nu_r)}-\frac{\betabarra_g(\nu_{\eta})}{e_{g-1}(\nu_{\eta})}\right)$$
since $\betabarra_0(\nu_r)/e_{g-1}(\nu_r)=\betabarra_0(\nu_{\eta})/e_{g-1}(\nu_{\eta})$. Then, by (\ref{Chebichev1}) and (\ref{Chebichev2}),
$$\delta=\frac{\betabarra_0(\nu_{\eta})}{ e_{g-1}(\nu_{\eta})}(-1)^{\sigma_g(\nu_{\eta})}=\frac{\betabarra_0(\nu_{r})}{e_{g-1}(\nu_r) }(-1)^{\sigma_g(\nu_{\eta})}.$$
Hence, Part (a) follows from Proposition \ref{felix}.

If $p_{r+1}$ is free, taking into account that $\betabarra_0({\nu})=(\betabarra_0(\nu_r),0)$ and $\betabarra_{g+1}(\nu)=(\betabarra_{g+1}(\nu_r),1)$ (see the proof of Proposition \ref{tri}) and applying a similar reasoning as before, one can deduce the existence of the unique valuation $\nu'$ as in Part (b).
\end{proof}

The following result describes the Newton-Okounkov bodies of valuations $\nu'$ as in the previous lemma, showing that they have a side on the $X$-axis.

\begin{theorem}
\label{normalizacion}
Let $E_{\bullet}$ a flag  as above and let $\nu = \nu_{E_\bullet}$ be its attached exceptional curve  valuation. Let $\Delta_{\nu'} $ be the Newton-Okounkov body  of the  valuation $\nu'$ equivalent to $\nu$ defined in Lemma \ref{el412}.
\begin{itemize}
\item[$\diamond$] If $\nu$ is minimal, then $\Delta_{\nu'} $ is a triangle with vertices $(0,0)$,
\[
\left( \frac{\hat{\mu} (\nu_r)}{\betabarra_0(\nu_r)}, 0 \right) \; \mbox{ and } \; \hat{\mu} (\nu_r) \left( \frac{1}{\betabarra_0(\nu_r)}, \frac{1}{\betabarra_{g^*}(\nu_r)} \right),
 \]
where $g^*$ stands for the number of Puiseux pairs of $\nu$.
\item[$\diamond$] Otherwise ($\nu$ is not minimal),  $\Delta_{\nu'}$ is the convex hull of the following points $(0,0)$,
    \[
    \frac{1}{\hat{\mu} (\nu_r)}\left(\frac{\betabarra_{g+1}(\nu_r)}{\betabarra_0(\nu_r)}, e_{g^*-1}(\nu_r)\right), \;\;
    \frac{1}{\hat{\mu} (\nu_r)}\left(\frac{\betabarra_{g+1}(\nu_r)}{\betabarra_0(\nu_r)}, 0\right),
     \]
     and $\left(\hat{\mu} (\nu_r) / \betabarra_0(\nu_r), \alpha\right)$, where $0 \leq \alpha \leq \frac{\hat{\mu} (\nu_r)  e_{g^*-1}(\nu_r)}{\betabarra_{g+1}(\nu_r)}$ and  $g^* $ is the number of Puiseux pairs of $\nu$.
\end{itemize}
\end{theorem}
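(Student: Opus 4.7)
The plan is to deduce Theorem \ref{normalizacion} from the already proved Theorems \ref{nobminimal} and \ref{53} via the fact that, by construction, the valuation $\nu'$ of Lemma \ref{el412} is equivalent to $\nu$. As observed just after the definition of $\Delta_{\nu}$ for an exceptional curve valuation taking values in $\mathbb{R}^2$, if $\nu'=\psi\circ\nu$ with $\psi:\mathbb{R}^2\to\mathbb{R}^2$ a linear automorphism, then $\Delta_{\nu'}=\psi(\Delta_\nu)$. The matrix $A$ of $\psi$ (acting on row vectors by $x\mapsto xA$) was written down explicitly inside the proof of Lemma \ref{el412}; hence the computation of $\Delta_{\nu'}$ amounts to pushing the already-known vertices of $\Delta_\nu$ through this matrix.

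For the minimal case I would apply $A$ to the three vertices of the triangle $\Delta_\nu$ supplied by Theorem \ref{nobminimal}. In the satellite subcase, using the definition of the scalar $\delta$ from the proof of Lemma \ref{el412}, the vertex $(\hat{\mu}(\nu_r),\hat{\mu}(\nu_r)\betabarra_0(\nu_\eta)/\betabarra_0(\nu_r))$ is sent to $(\hat{\mu}(\nu_r)/\betabarra_0(\nu_r),0)$, because the two contributions to its second coordinate cancel, while the vertex $(\hat{\mu}(\nu_r),\hat{\mu}(\nu_r)\betabarra_g(\nu_\eta)/\betabarra_g(\nu_r))$ is sent to $\hat{\mu}(\nu_r)(1/\betabarra_0(\nu_r),1/\betabarra_g(\nu_r))$ after the simplification $\betabarra_0(\nu_\eta)\betabarra_g(\nu_r)-\betabarra_g(\nu_\eta)\betabarra_0(\nu_r)=\delta$; since $g^{*}=g$ here, this matches the claim. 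The free subcase is immediate from Lemma \ref{el412}(b), which gives the diagonal matrix $A=\mathrm{diag}(1/\betabarra_0(\nu_r),1)$ and uses $g^{*}=g+1$.

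For the non-minimal case the same strategy applied to the four vertices $(0,0),Q_1,Q_2,Q_3$ of $\Delta_\nu$ described in Theorem \ref{53} gives the result. Pushing $Q_1$ and $Q_2$ through $A$ and invoking the identity $\betabarra_0(\nu_r)/\delta=\pm e_{g-1}(\nu_r)$ obtained in the proof of Lemma \ref{el412} as a consequence of (\ref{Chebichev1})-(\ref{Chebichev2}) yields the first three claimed points, of first coordinate $\betabarra_{g+1}(\nu_r)/(\hat{\mu}(\nu_r)\betabarra_0(\nu_r))$ and second coordinates $e_{g^{*}-1}(\nu_r)/\hat{\mu}(\nu_r)$ and $0$, respectively. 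The vertex $Q_3=(\hat{\mu}(\nu_r),c)$ has first coordinate $\hat{\mu}(\nu_r)$, so its image has first coordinate $\hat{\mu}(\nu_r)/\betabarra_0(\nu_r)$ and second coordinate $\alpha$ that is an affine function of $c$; the inclusion $Q_3\in\mathfrak{C}(\nu)\cap\mathfrak{H}(\nu)$ (Lemma \ref{cont} together with Proposition \ref{tri}) restricts $c$ to the corresponding side of the triangle, whose image under the second-coordinate component of $A$ is, after applying Lemma \ref{45}, precisely the interval $[0,\hat{\mu}(\nu_r)e_{g^{*}-1}(\nu_r)/\betabarra_{g+1}(\nu_r)]$. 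The free subcase is again direct from the diagonal form of $A$.

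The main technical obstacle is the careful bookkeeping of signs: $\delta$ is only determined up to the parity $(-1)^{\sigma_g(\nu_\eta)}$, equivalently up to whether $\eta\preccurlyeq r$ or not as in Lemma \ref{el412}(a), and consistency with this sign has to be maintained in every matrix computation in order to obtain the correct orientation of the transformed polygon. A second delicate step is rewriting the transformed extremal values of $c$ as the combination $\hat{\mu}(\nu_r)e_{g^{*}-1}(\nu_r)/\betabarra_{g+1}(\nu_r)$ appearing in the statement; this is exactly where Lemma \ref{45} (or directly its ingredients (\ref{Chebichev1})-(\ref{Chebichev2})) enters the picture.
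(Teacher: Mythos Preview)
Your proposal is correct and follows essentially the same route as the paper: transport the vertices of $\Delta_\nu$ (given by Theorem \ref{nobminimal} in the minimal case and Theorem \ref{53} in the non-minimal case) through the linear automorphism $\psi$ whose matrix $A$ is made explicit in the proof of Lemma \ref{el412}, and simplify using the identities established there together with Lemma \ref{45}.

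One small omission worth flagging: in the non-minimal satellite case the points $Q_1,Q_2$ involve the quantity $\nu_r(\varphi_\eta)$, and the identities you cite from Lemma \ref{el412} and Lemma \ref{45} alone do not evaluate it. The paper handles this by first invoking Proposition \ref{felix}(b) to rewrite $\nu_r(\varphi_\eta)=e_{g-1}(\nu_r)\betabarra_g(\nu_\eta)$ (equivalently, $Q_1$ is proportional to $(\betabarra_g(\nu_r),\betabarra_g(\nu_\eta))$), and only then multiplies by $A$; the simplification of the second coordinate then reduces precisely to the key identity (\ref{laclave}). Your computation will not close without this step (or an equivalent evaluation of $\nu_r(\varphi_\eta)$), so you should make the appeal to Proposition \ref{felix} explicit. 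With that addition, your argument and the paper's are the same.
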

\begin{proof}
The case of $\nu$ minimal follows after computing the images of the defining pairs of the Newton-Okounkov bodies $\Delta_\nu$, described in Proposition \ref{tri} and Theorem \ref{53}, by the linear automorphism appearing in the proof of Lemma \ref{el412}. Let us explain what happens in the non-minimal case when $q=p_{r+1}$ is not free and $\eta \preccurlyeq r$. The remaining  cases can be shown analogously. According to Theorem \ref{53}, it is
$Q_1=\frac{1}{\hat{\mu}(\nu_r)}\left(\betabarra_{g+1}(\nu_r), \nu_{r}(\varphi_{\eta})\right)$; by Proposition \ref{felix} this can be expressed as
\[
\frac{\betabarra_{g+1}(\nu_r) }{\hat{\mu}(\nu_r) \betabarra_{g}(\nu_r)} \left( \betabarra_{g}(\nu_r), \betabarra_{g} (\nu_\eta)  \right),
\]
which multiplied by the matrix
$$
A=\begin{pmatrix} \frac{1}{\betabarra_0(\nu_r)} & e_{g-1}(\nu_{\eta}) \\
0 & - e_{g-1}(\nu_{r})
 \end{pmatrix},
$$
gives
\[
\frac{1}{\hat{\mu}(\nu_r)} \left(\frac{\betabarra_{g+1}(\nu_r)}{\betabarra_{0}(\nu_r)}, e_{g-1}(\nu_r) \right)
\]
by (\ref{laclave}) and the fact that $e_{g-1}(\nu_r) \betabarra_{g}(\nu_r) = \betabarra_{g+1}(\nu_r)$. This provides our first non-vanishing vertex.

With respect to $Q_2=\frac{1}{\hat{\mu}(\nu_r)}(\betabarra_{g+1}(\nu_r), \nu_{r}(\varphi_{\eta})+1)$, again by Proposition \ref{felix} and by Lemma \ref{45}, it holds that
\[
Q_2= \frac{\betabarra_{g+1}(\nu_r)}{\hat{\mu}(\nu_r)} \left(1, \frac{\betabarra_{0}(\nu_\eta)}{\betabarra_{0}(\nu_r)} \right),
\]
which, after multiplication by $A$, gives the point $ \frac{1}{\hat{\mu}(\nu_r)} \left(\frac{\betabarra_{g+1}(\nu_r)}{\betabarra_{0}(\nu_r)}, 0 \right)$.

Finally, the equality $(\hat{\mu}(\nu_r),c) A = (\hat{\mu}(\nu_r)/\betabarra_{0}^r, \hat{\mu}(\nu_r) e_{g-1} (\nu_\eta) - c e_{g-1}(\nu_r))$ holds and gives the last vertex, which completes the proof.
\end{proof}

\begin{remark}
\label{315}
{\rm The Newton-Okounkov body $\Delta_{\nu'}$ for $\nu'$ as above when $\nu$ has only one Puiseux pair and $p_{r+1}$ is satellite was described in \cite[Corollary 5.8]{cil} by a very different procedure.}
\end{remark}

\section{Characterization of triangular Newton-Okounkov bodies of exceptional curve valuations}

Keep the above notation and recall that the Newton-Okounkov body $\Delta_{\nu}$ of an exceptional curve valuation $\nu$ is either a quadrilateral or a triangle. We have seen that $\Delta_{\nu}$ is  a triangle when $\nu$ is minimal.
In this section, we assume that $\nu$ \emph{is not minimal} and we characterize the cases when $\Delta_{\nu}$ is also a triangle.

For a start, let $\Gamma_\nu$ be the (infinite) dual graph of $\nu$ (see Subsection \ref{32}) and let $C$ be the supraminimal curve of $\nu_r$. Define $\Gamma_{\nu}^*$ as the graph obtained from $\Gamma_{\nu}$ by adding a new vertex (associated to $C$) which is  joined (by an edge) with those vertices of $\Gamma_{\nu}$ given by divisors $E_i$ such that, for all $k$ large enough, the strict transforms of $E_i$ and $C$ in the surface $X_k$ have non-empty intersection.

\begin{lemma}\label{curvetas}
Let $\nu$ be a non-minimal exceptional curve valuation, let $h$ be an analytically irreducible element of $R$ and let $i_0$ be the maximum of the set of indices $i$ such that the strict transform of the germ defined by $h=0$ passes through $p_i$. Then, either there exists a positive integer $a$ such that $\nu(h)=a\;\nu(\varphi_{i_0})$ or there exist positive integers $a,b$ and $j_0$ such that
\begin{itemize}
\item[(a)] $\nu(h)=a\;\nu(\varphi_{i_0})+b\;\nu(\varphi_{j_0})$ and
\item[(b)] the vertices of $\Gamma_{\nu}$ associated with the divisors $E_{i_0}$ and $E_{j_0}$ belong to the same connected component of $\Gamma_{\nu}$.
\end{itemize}
\end{lemma}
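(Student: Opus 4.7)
The plan is to use the Noether formula (\ref{Noether}) to express $\nu(h)$ and then analyze the multiplicity sequence of the branch $h$ by comparison with those of the curvettes $\varphi_{i_0}$ and (in one of the two cases) a second curvette $\varphi_{j_0}$.

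Since $h$ is analytically irreducible and ${\rm mult}_{p_j}(h)=0$ for $j>i_0$, the Noether formula gives $\nu(h)=\sum_{i=1}^{i_0}{\rm mult}_{p_i}(h)\,\nu(\mathfrak{m}_i)$. The strict transform of $h$ on $X_{i_0}$ is a branch meeting $E_{i_0}$ at a single point $q$; by the maximality of $i_0$ we have $q\neq p_{i_0+1}$. The key dichotomy is whether $q$ is free or satellite on $E_{i_0}$. On $X_{i_0}$ the divisor $E_{i_0}$ meets at most two other exceptional divisors (namely $E_{i_0-1}$ and, if $p_{i_0}$ itself was satellite, one further divisor), and one of the (at most two) satellite positions is occupied by $p_{i_0+1}$ whenever the latter is satellite; hence $q$ is either (i) a free point of $E_{i_0}$ or (ii) the remaining intersection $E_{i_0}\cap E_{j_0}$ for a uniquely determined $j_0<i_0$.

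In case (i), none of the infinitely near points of $h$ beyond $p_{i_0}$ lies on the strict transform of any $E_i$ with $i<i_0$, so the proximity equalities for $\{{\rm mult}_{p_i}(h)\}_{i=1}^{i_0}$ coincide, as a linear system, with those for $\{{\rm mult}_{p_i}(\varphi_{i_0})\}_{i=1}^{i_0}$, differing only in the boundary value at $p_{i_0}$: ${\rm mult}_{p_{i_0}}(h)=a$ against ${\rm mult}_{p_{i_0}}(\varphi_{i_0})=1$. A backward induction on $i$ then gives ${\rm mult}_{p_i}(h)=a\cdot{\rm mult}_{p_i}(\varphi_{i_0})$ for every $i\leq i_0$, whence $\nu(h)=a\,\nu(\varphi_{i_0})$. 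In case (ii), the only extra input beyond the cluster-internal proximity equalities is the combined multiplicity $b>0$ of those infinitely near points of $h$ beyond $p_{i_0}$ that are proximate to $p_{j_0}$; using that ${\rm mult}_{p_i}(\varphi_{j_0})=0$ for $i>j_0$, an analogous backward induction produces the decomposition ${\rm mult}_{p_i}(h)=a\cdot{\rm mult}_{p_i}(\varphi_{i_0})+b\cdot{\rm mult}_{p_i}(\varphi_{j_0})$ for $i\leq i_0$, yielding $\nu(h)=a\,\nu(\varphi_{i_0})+b\,\nu(\varphi_{j_0})$. It remains to check in case (ii) that $E_{i_0}$ and $E_{j_0}$ belong to the same connected component of $\Gamma_\nu$. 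Since they meet at $q$ on $X_{i_0}$, they are adjacent in $\Gamma_{\nu_{i_0}}$; in building the infinite graph $\Gamma_\nu$, the only edges that fail to persist are $E_kE_r$ for $k\geq r+1$ (each deleted at the next step of the construction) and, when $p_{r+1}$ is satellite, the initial edge $E_rE_\eta$. A short case analysis using $q\neq p_{i_0+1}$ rules out the configurations $\{i_0,j_0\}=\{k,r\}$ and $\{i_0,j_0\}=\{r,\eta\}$, so the edge $E_{i_0}E_{j_0}$ survives in $\Gamma_\nu$ and both vertices lie in the same component.

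The main technical obstacle will be the bookkeeping in case (ii): one needs to verify that the off-cluster contribution to the cluster multiplicities factors as a single scalar multiple of the $\varphi_{j_0}$-multiplicities, rather than as a more involved combination that would force extra terms $c\,\nu(\varphi_{k})$. This ultimately reduces to the observation that once $h$ leaves the cluster at the satellite point $q\in E_{i_0}\cap E_{j_0}$, its further resolution interacts with $\mathcal{C}_\nu$ only through its proximity to $p_{j_0}$.
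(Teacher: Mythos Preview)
Your argument is correct and reaches the same conclusion as the paper, but by a genuinely different route. The paper proceeds via Zariski's factorization of complete ideals: it forms the complete ideal $\mathcal{P}=\pi_*\mathcal{O}_{X_{i_0}}(-\sum {\rm mult}_{p_j}(h)E_j^*)$, factors it as $\prod \mathcal{P}_j^{e_j}$ with $e_j={\rm mult}_{p_j}(h)-\sum_{p_i\to p_j}{\rm mult}_{p_i}(h)$ (the sum taken inside $\mathcal{C}_\nu\cap\mathcal{C}_h$), and then reads off $\nu(h)=\sum_{j\in\Omega}e_j\,\nu(\varphi_j)$. It then bounds $|\Omega|\le 2$ by an Enriques-diagram case analysis of the augmented diagram $\tilde{\mathcal{E}}_{\nu_{i_0+1}}$, and locates $i_0,j_0$ on the same side of $i_0+1$ in $\Gamma_{\nu_{i_0+1}}$.

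You bypass the complete-ideal machinery entirely. Your dichotomy (free versus satellite exit point $q$) is exactly the geometric content of the paper's $|\Omega|\in\{1,2\}$, and your backward induction on the proximity equalities recovers the decomposition ${\rm mult}_{p_i}(h)=a\,{\rm mult}_{p_i}(\varphi_{i_0})+b\,{\rm mult}_{p_i}(\varphi_{j_0})$ directly, using only that off-cluster points of $h$ can be proximate to at most $p_{i_0}$ and $p_{j_0}$. For part (b), rather than arguing via the ordering $\preccurlyeq$ on $\Gamma_{\nu_{i_0+1}}$, you observe that the edge $E_{i_0}E_{j_0}$ already present in $\Gamma_{\nu_{i_0}}$ survives in $\Gamma_\nu$ because no subsequent center $p_k$ ($k>i_0$) is the point $q$; your exclusion of $\{i_0,j_0\}=\{k,r\}$ ($k\ge r+1$) and $\{i_0,j_0\}=\{r,\eta\}$ is precisely what ensures this edge is not among those deleted in the limiting process. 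Your approach is more elementary and self-contained; the paper's is more structural but invokes the decomposition theory of 2-dimensional regular local rings.
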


\begin{proof}
Let ${\mathcal C}_h=\{q_i\}_{i=1}^{\infty}$ be the \emph{cluster of centers of $h$}, that is, the set of infinitely near points such that $q_1=p$ and, for each $i\geq 2$, $q_i$ is the intersection point between the exceptional divisor $E_{i-1}$ and the strict transform of the germ of curve defined by $h$. Then ${\mathcal C}_{\nu}\cap {\mathcal C}_h=\{p_1, p_2, \ldots,p_{i_0}\}$. The proximity equalities for germs of curves \cite[Theorem 3.5.3]{C} yield that ${\rm mult}_{q_j}(h)=\sum_{q_k} {\rm mult}_{q_k}(h)=0$, where the summation runs over all points $q_k$ which are proximate to $q_j$.

For each $j\in \{1, 2, \ldots,i_0\}$ we define $e_j:={\rm mult}_{p_j}(h)-\sum_{p_i} {\rm mult}_{p_i}(h)$, where $p_i$ runs over the set of points $p_i\in {\mathcal C}_{\nu}\cap {\mathcal C}_h$ which are proximate to $p_j$. Notice that $e_j\geq 0$ for all $j=1, 2, \ldots,i_0$.

Let ${\mathcal P}$ be the complete ideal of $R$ given by the stalk at $p$ of the sheaf $$\pi_{*}\left( {\mathcal O}_{X_{i_0}}(-\sum_{j=1}^{i_0} {\rm mult}_{p_j}(h) E_j^*)\right),$$
where $\pi: X_{i_0} \rightarrow X_0$ is the composition of the blowing-ups at ${\mathcal C}_{\nu}\cap {\mathcal C}_h$. By Zariski's theory of complete ideals of a 2-dimensional regular local ring, the ideal ${\mathcal P}$ decomposes as a product of simple complete ideals in the form
$${\mathcal P}=\prod_{j=1}^{i_0} {\mathcal P}_j^{e_j},$$
where ${\mathcal P}_j$ is the simple complete ideal of $R$ defined by the divisor $E_j$.
Notice that $\nu(h)$ coincides with the valuation of a general element of ${\mathcal P}$. Since $\varphi_j$ is a general element of ${\mathcal P}_j$ for all $j$, it holds that
\begin{equation}\label{prrrt}
\nu(h)=\sum_{j\in \Omega} e_j \; \nu(\varphi_j),
\end{equation}
where $\Omega:=\{j\in \{1,2, \ldots,i_0\}\mid e_j> 0\}$. Notice that $i_0\in \Omega$. We will see that the cardinality $|\Omega|$ of $\Omega$ is at most 2 and that, when $|\Omega|=2$, the associated exceptional divisors correspond to vertices of $\Gamma_{\nu}$ lying on the same connected component.

Write $\mathfrak{q}:=q_{i_0+1}$. If $\mathfrak{q}$ is free it is clear that $\Omega=\{i_0\}$; therefore we assume that $\mathfrak{q}$ is satellite and consider the divisorial valuation $\nu_{i_0+1}$. It is sufficient to prove that either $\Omega=\{i_0\}$, or $\Omega=\{i_0,j_0\}$ with $i_0,j_0 \preccurlyeq i_0+1$ or
$i_0,j_0 \not\preccurlyeq i_0+1$ in the dual graph $\Gamma_{\nu_{i_0+1}}$.

Let $\tilde{\mathcal E}_{\nu_{i_0+1}}$ be the Enriques diagram obtained from ${\mathcal E}_{\nu_{i_0+1}}$ by adding a new vertex (associated with the point $\mathfrak{q}$) which is joined (by an edge) with $p_{i_0}$ (straight or curved according with the definition of Enriques diagram).

Suppose that $p_{i_0}$ is free, then the unique possibility for $\tilde{\mathcal E}_{\nu_{i_0+1}}$ is the one depicted in Figure \ref{figura10b} (notice that we are assuming that $\mathfrak{q}$ is satellite).

Otherwise, $p_{i_0}$ is satellite and then the unique possibilities for $\tilde{\mathcal E}_{\nu_{i_0+1}}$ are those depicted in Figure \ref{figura10a}, and those obtained replacing the edge with origin at $p_s$ by a curved edge and/or the edge $p_{i_0}p_{i_0+1}$ by a curved edge that is tangent to $p_{i_0-1}p_{i_0}$.

The proximity equalities for $h$ in each case prove that $|\Omega|\leq 2$. Moreover, when $\Omega=\{i_0,j_0\}$ ($j_0\neq i_0$), the position of the vertices associated with the divisors $E_{i_0}$ and $E_{j_0}$ in the dual graph $\Gamma_{\nu_{i_0+1}}$ shows that either $i_0,j_0 \preccurlyeq i_0+1$ or
$i_0,j_0 \not\preccurlyeq i_0+1$. This proves the lemma.

\begin{figure}
\begin{center}

\newcommand{\norma}[2]{%
\SQUARE{#1}{\m}
\SQUARE{#2}{\n}
\ADD{\m}{\n}{\k}
\SQUAREROOT{\k}{\norm}
}


\newcommand{\arco}[6]{%
\norma{#1}{#2}
\DIVIDE{#2}{\norm}{\tangentey}
\DIVIDE{#1}{\norm}{\tangentex}
\COPY{\tangentey}{\ortogonalx}
\COPY{\tangentey}{\centrox}
\SUBTRACT{0}{\tangentex}{\ortogonaly}
\MULTIPLY{\centrox}{#6}{\centrox}
\MULTIPLY{\ortogonaly}{#6}{\centroy}
\ADD{#3}{\centrox}{\centrox}
\ADD{#4}{\centroy}{\centroy}
\changereferencesystem(\centrox,\centroy)(\ortogonalx,\ortogonaly)(\tangentex,\tangentey)
\radiansangles
\SUBTRACT{\numberPI}{#5}{\angulo}
\ellipticArc{#6}{#6}{\angulo}{\numberPI}
\COS{\angulo}{\puntox}
\SIN{\angulo}{\puntoy}
\MULTIPLY{#6}{\puntox}{\puntox}
\MULTIPLY{#6}{\puntoy}{\puntoy}
\Put(\puntox,\puntoy){\circle*{0.08}}
\SUBTRACT{0}{\puntox}{\direcciony}
\COPY{\puntoy}{\direccionx}
\norma{\direccionx}{\direcciony}
\DIVIDE{\direccionx}{\norm}{\direccionx}
\DIVIDE{\direcciony}{\norm}{\direcciony}
\COPY{\direcciony}{\ortogonalx}
\COPY{\direccionx}{\ortogonaly}
\SUBTRACT{0}{\ortogonaly}{\ortogonaly}
\changereferencesystem(\puntox,\puntoy)(\ortogonalx,\ortogonaly)(\direccionx,\direcciony)
}

\newcommand{\arcosinpunto}[6]{%
\norma{#1}{#2}
\DIVIDE{#2}{\norm}{\tangentey}
\DIVIDE{#1}{\norm}{\tangentex}
\COPY{\tangentey}{\ortogonalx}
\COPY{\tangentey}{\centrox}
\SUBTRACT{0}{\tangentex}{\ortogonaly}
\MULTIPLY{\centrox}{#6}{\centrox}
\MULTIPLY{\ortogonaly}{#6}{\centroy}
\ADD{#3}{\centrox}{\centrox}
\ADD{#4}{\centroy}{\centroy}
\changereferencesystem(\centrox,\centroy)(\ortogonalx,\ortogonaly)(\tangentex,\tangentey)
\radiansangles
\SUBTRACT{\numberPI}{#5}{\angulo}
\ellipticArc{#6}{#6}{\angulo}{\numberPI}
\COS{\angulo}{\puntox}
\SIN{\angulo}{\puntoy}
\MULTIPLY{#6}{\puntox}{\puntox}
\MULTIPLY{#6}{\puntoy}{\puntoy}
\SUBTRACT{0}{\puntox}{\direcciony}
\COPY{\puntoy}{\direccionx}
\norma{\direccionx}{\direcciony}
\DIVIDE{\direccionx}{\norm}{\direccionx}
\DIVIDE{\direcciony}{\norm}{\direcciony}
\COPY{\direcciony}{\ortogonalx}
\COPY{\direccionx}{\ortogonaly}
\SUBTRACT{0}{\ortogonaly}{\ortogonaly}
\changereferencesystem(\puntox,\puntoy)(\ortogonalx,\ortogonaly)(\direccionx,\direcciony)
}

\newcommand{\arcoblanco}[6]{%
\norma{#1}{#2}
\DIVIDE{#2}{\norm}{\tangentey}
\DIVIDE{#1}{\norm}{\tangentex}
\COPY{\tangentey}{\ortogonalx}
\COPY{\tangentey}{\centrox}
\SUBTRACT{0}{\tangentex}{\ortogonaly}
\MULTIPLY{\centrox}{#6}{\centrox}
\MULTIPLY{\ortogonaly}{#6}{\centroy}
\ADD{#3}{\centrox}{\centrox}
\ADD{#4}{\centroy}{\centroy}
\changereferencesystem(\centrox,\centroy)(\ortogonalx,\ortogonaly)(\tangentex,\tangentey)
\radiansangles
\SUBTRACT{\numberPI}{#5}{\angulo}
\COS{\angulo}{\puntox}
\SIN{\angulo}{\puntoy}
\MULTIPLY{#6}{\puntox}{\puntox}
\MULTIPLY{#6}{\puntoy}{\puntoy}
\Put(\puntox,\puntoy){\circle*{0.04}}
\SUBTRACT{0}{\puntox}{\direcciony}
\COPY{\puntoy}{\direccionx}
\norma{\direccionx}{\direcciony}
\DIVIDE{\direccionx}{\norm}{\direccionx}
\DIVIDE{\direcciony}{\norm}{\direcciony}
\COPY{\direcciony}{\ortogonalx}
\COPY{\direccionx}{\ortogonaly}
\SUBTRACT{0}{\ortogonaly}{\ortogonaly}
\changereferencesystem(\puntox,\puntoy)(\ortogonalx,\ortogonaly)(\direccionx,\direcciony)
}

\newcommand{\arcoblancosinpunto}[6]{%
\norma{#1}{#2}
\DIVIDE{#2}{\norm}{\tangentey}
\DIVIDE{#1}{\norm}{\tangentex}
\COPY{\tangentey}{\ortogonalx}
\COPY{\tangentey}{\centrox}
\SUBTRACT{0}{\tangentex}{\ortogonaly}
\MULTIPLY{\centrox}{#6}{\centrox}
\MULTIPLY{\ortogonaly}{#6}{\centroy}
\ADD{#3}{\centrox}{\centrox}
\ADD{#4}{\centroy}{\centroy}
\changereferencesystem(\centrox,\centroy)(\ortogonalx,\ortogonaly)(\tangentex,\tangentey)
\radiansangles
\SUBTRACT{\numberPI}{#5}{\angulo}
\COS{\angulo}{\puntox}
\SIN{\angulo}{\puntoy}
\MULTIPLY{#6}{\puntox}{\puntox}
\MULTIPLY{#6}{\puntoy}{\puntoy}
\SUBTRACT{0}{\puntox}{\direcciony}
\COPY{\puntoy}{\direccionx}
\norma{\direccionx}{\direcciony}
\DIVIDE{\direccionx}{\norm}{\direccionx}
\DIVIDE{\direcciony}{\norm}{\direcciony}
\COPY{\direcciony}{\ortogonalx}
\COPY{\direccionx}{\ortogonaly}
\SUBTRACT{0}{\ortogonaly}{\ortogonaly}
\changereferencesystem(\puntox,\puntoy)(\ortogonalx,\ortogonaly)(\direccionx,\direcciony)
}

\newcommand{\segmento}[4]{%
\MULTIPLY{#3}{#4}{\long}
\MULTIPLY{#1}{\long}{\dirx}
\MULTIPLY{#2}{\long}{\diry}
\xLINE(0,0)(\dirx,\diry)
\Put(\dirx,\diry){\circle*{0.08}}
\changereferencesystem(\dirx,\diry)(#2,-#1)(#1,#2)
}

\newcommand{\segmentosinpunto}[4]{%
\MULTIPLY{#3}{#4}{\long}
\MULTIPLY{#1}{\long}{\dirx}
\MULTIPLY{#2}{\long}{\diry}
\xLINE(0,0)(\dirx,\diry)
\changereferencesystem(\dirx,\diry)(#2,-#1)(#1,#2)
}

\newcommand{\segmentoblanco}[4]{%
\MULTIPLY{#3}{#4}{\long}
\MULTIPLY{#1}{\long}{\dirx}
\MULTIPLY{#2}{\long}{\diry}
\Put(\dirx,\diry){\circle*{0.04}}
\changereferencesystem(\dirx,\diry)(#2,-#1)(#1,#2)
}

\newcommand{\segmentoblancosinpunto}[4]{%
\MULTIPLY{#3}{#4}{\long}
\MULTIPLY{#1}{\long}{\dirx}
\MULTIPLY{#2}{\long}{\diry}
\changereferencesystem(\dirx,\diry)(#2,-#1)(#1,#2)
}

\setlength{\unitlength}{1.5cm}%

\begin{Picture}(-0.5,-0.5)(2.2,2)
\thicklines

\referencesystem(0,0)(1,0)(0,1)

\COPY{1}{\rad}
\COPY{0.4}{\ang}
\DIVIDE{\ang}{2}{\angbis}
\DIVIDE{\ang}{3}{\angbisbis}
\COPY{0.7}{\angtris}

\COPY{0}{\taux}
\COPY{1}{\tauy}
\COPY{0}{\pux}
\COPY{7}{\puy}

\Put(-0.3,-0.3){\reflectbox{$\ddots$}}
\Put(0.2,0){\circle*{0.08}}
\Put(-0.4,0){\tiny{$p_{i_0-1}$}}

\arco{1}{1}{0.2}{0}{\ang}{\rad}
\Put(-0.2,0){\tiny{$p_{i_0}$}}
\segmento{1}{0}{\ang}{\rad}
\Put(0.2,0){\tiny{$\mathfrak{q}$}}
\segmento{0}{-1}{\ang}{\rad}
\arco{1}{0}{0}{0}{\ang}{\rad}
\Put(-0.2,0){\tiny{$p_{i_0+1}$}}



\end{Picture}
\end{center}

\caption{Graph $\tilde{\mathcal E}_{\nu_{i_0+1}}$ ($p_{i_0}$  free and $\mathfrak{q}$ satellite)}
  \label{figura10b}
\end{figure}

\begin{figure}
\begin{center}

\newcommand{\norma}[2]{%
\SQUARE{#1}{\m}
\SQUARE{#2}{\n}
\ADD{\m}{\n}{\k}
\SQUAREROOT{\k}{\norm}
}


\newcommand{\arco}[6]{%
\norma{#1}{#2}
\DIVIDE{#2}{\norm}{\tangentey}
\DIVIDE{#1}{\norm}{\tangentex}
\COPY{\tangentey}{\ortogonalx}
\COPY{\tangentey}{\centrox}
\SUBTRACT{0}{\tangentex}{\ortogonaly}
\MULTIPLY{\centrox}{#6}{\centrox}
\MULTIPLY{\ortogonaly}{#6}{\centroy}
\ADD{#3}{\centrox}{\centrox}
\ADD{#4}{\centroy}{\centroy}
\changereferencesystem(\centrox,\centroy)(\ortogonalx,\ortogonaly)(\tangentex,\tangentey)
\radiansangles
\SUBTRACT{\numberPI}{#5}{\angulo}
\ellipticArc{#6}{#6}{\angulo}{\numberPI}
\COS{\angulo}{\puntox}
\SIN{\angulo}{\puntoy}
\MULTIPLY{#6}{\puntox}{\puntox}
\MULTIPLY{#6}{\puntoy}{\puntoy}
\Put(\puntox,\puntoy){\circle*{0.08}}
\SUBTRACT{0}{\puntox}{\direcciony}
\COPY{\puntoy}{\direccionx}
\norma{\direccionx}{\direcciony}
\DIVIDE{\direccionx}{\norm}{\direccionx}
\DIVIDE{\direcciony}{\norm}{\direcciony}
\COPY{\direcciony}{\ortogonalx}
\COPY{\direccionx}{\ortogonaly}
\SUBTRACT{0}{\ortogonaly}{\ortogonaly}
\changereferencesystem(\puntox,\puntoy)(\ortogonalx,\ortogonaly)(\direccionx,\direcciony)
}

\newcommand{\arcosinpunto}[6]{%
\norma{#1}{#2}
\DIVIDE{#2}{\norm}{\tangentey}
\DIVIDE{#1}{\norm}{\tangentex}
\COPY{\tangentey}{\ortogonalx}
\COPY{\tangentey}{\centrox}
\SUBTRACT{0}{\tangentex}{\ortogonaly}
\MULTIPLY{\centrox}{#6}{\centrox}
\MULTIPLY{\ortogonaly}{#6}{\centroy}
\ADD{#3}{\centrox}{\centrox}
\ADD{#4}{\centroy}{\centroy}
\changereferencesystem(\centrox,\centroy)(\ortogonalx,\ortogonaly)(\tangentex,\tangentey)
\radiansangles
\SUBTRACT{\numberPI}{#5}{\angulo}
\ellipticArc{#6}{#6}{\angulo}{\numberPI}
\COS{\angulo}{\puntox}
\SIN{\angulo}{\puntoy}
\MULTIPLY{#6}{\puntox}{\puntox}
\MULTIPLY{#6}{\puntoy}{\puntoy}
\SUBTRACT{0}{\puntox}{\direcciony}
\COPY{\puntoy}{\direccionx}
\norma{\direccionx}{\direcciony}
\DIVIDE{\direccionx}{\norm}{\direccionx}
\DIVIDE{\direcciony}{\norm}{\direcciony}
\COPY{\direcciony}{\ortogonalx}
\COPY{\direccionx}{\ortogonaly}
\SUBTRACT{0}{\ortogonaly}{\ortogonaly}
\changereferencesystem(\puntox,\puntoy)(\ortogonalx,\ortogonaly)(\direccionx,\direcciony)
}

\newcommand{\arcoblanco}[6]{%
\norma{#1}{#2}
\DIVIDE{#2}{\norm}{\tangentey}
\DIVIDE{#1}{\norm}{\tangentex}
\COPY{\tangentey}{\ortogonalx}
\COPY{\tangentey}{\centrox}
\SUBTRACT{0}{\tangentex}{\ortogonaly}
\MULTIPLY{\centrox}{#6}{\centrox}
\MULTIPLY{\ortogonaly}{#6}{\centroy}
\ADD{#3}{\centrox}{\centrox}
\ADD{#4}{\centroy}{\centroy}
\changereferencesystem(\centrox,\centroy)(\ortogonalx,\ortogonaly)(\tangentex,\tangentey)
\radiansangles
\SUBTRACT{\numberPI}{#5}{\angulo}
\COS{\angulo}{\puntox}
\SIN{\angulo}{\puntoy}
\MULTIPLY{#6}{\puntox}{\puntox}
\MULTIPLY{#6}{\puntoy}{\puntoy}
\Put(\puntox,\puntoy){\circle*{0.04}}
\SUBTRACT{0}{\puntox}{\direcciony}
\COPY{\puntoy}{\direccionx}
\norma{\direccionx}{\direcciony}
\DIVIDE{\direccionx}{\norm}{\direccionx}
\DIVIDE{\direcciony}{\norm}{\direcciony}
\COPY{\direcciony}{\ortogonalx}
\COPY{\direccionx}{\ortogonaly}
\SUBTRACT{0}{\ortogonaly}{\ortogonaly}
\changereferencesystem(\puntox,\puntoy)(\ortogonalx,\ortogonaly)(\direccionx,\direcciony)
}

\newcommand{\arcoblancosinpunto}[6]{%
\norma{#1}{#2}
\DIVIDE{#2}{\norm}{\tangentey}
\DIVIDE{#1}{\norm}{\tangentex}
\COPY{\tangentey}{\ortogonalx}
\COPY{\tangentey}{\centrox}
\SUBTRACT{0}{\tangentex}{\ortogonaly}
\MULTIPLY{\centrox}{#6}{\centrox}
\MULTIPLY{\ortogonaly}{#6}{\centroy}
\ADD{#3}{\centrox}{\centrox}
\ADD{#4}{\centroy}{\centroy}
\changereferencesystem(\centrox,\centroy)(\ortogonalx,\ortogonaly)(\tangentex,\tangentey)
\radiansangles
\SUBTRACT{\numberPI}{#5}{\angulo}
\COS{\angulo}{\puntox}
\SIN{\angulo}{\puntoy}
\MULTIPLY{#6}{\puntox}{\puntox}
\MULTIPLY{#6}{\puntoy}{\puntoy}
\SUBTRACT{0}{\puntox}{\direcciony}
\COPY{\puntoy}{\direccionx}
\norma{\direccionx}{\direcciony}
\DIVIDE{\direccionx}{\norm}{\direccionx}
\DIVIDE{\direcciony}{\norm}{\direcciony}
\COPY{\direcciony}{\ortogonalx}
\COPY{\direccionx}{\ortogonaly}
\SUBTRACT{0}{\ortogonaly}{\ortogonaly}
\changereferencesystem(\puntox,\puntoy)(\ortogonalx,\ortogonaly)(\direccionx,\direcciony)
}

\newcommand{\segmento}[4]{%
\MULTIPLY{#3}{#4}{\long}
\MULTIPLY{#1}{\long}{\dirx}
\MULTIPLY{#2}{\long}{\diry}
\xLINE(0,0)(\dirx,\diry)
\Put(\dirx,\diry){\circle*{0.08}}
\changereferencesystem(\dirx,\diry)(#2,-#1)(#1,#2)
}

\newcommand{\segmentosinpunto}[4]{%
\MULTIPLY{#3}{#4}{\long}
\MULTIPLY{#1}{\long}{\dirx}
\MULTIPLY{#2}{\long}{\diry}
\xLINE(0,0)(\dirx,\diry)
\changereferencesystem(\dirx,\diry)(#2,-#1)(#1,#2)
}

\newcommand{\segmentoblanco}[4]{%
\MULTIPLY{#3}{#4}{\long}
\MULTIPLY{#1}{\long}{\dirx}
\MULTIPLY{#2}{\long}{\diry}
\Put(\dirx,\diry){\circle*{0.04}}
\changereferencesystem(\dirx,\diry)(#2,-#1)(#1,#2)
}

\newcommand{\segmentoblancosinpunto}[4]{%
\MULTIPLY{#3}{#4}{\long}
\MULTIPLY{#1}{\long}{\dirx}
\MULTIPLY{#2}{\long}{\diry}
\changereferencesystem(\dirx,\diry)(#2,-#1)(#1,#2)
}

\setlength{\unitlength}{1.5cm}%

\begin{Picture}(-1.2,6)(11,8)
\thicklines

\referencesystem(0,0)(1,0)(0,1)

\COPY{1}{\rad}
\COPY{0.4}{\ang}
\DIVIDE{\ang}{2}{\angbis}
\DIVIDE{\ang}{3}{\angbisbis}
\COPY{0.7}{\angtris}

\COPY{0}{\taux}
\COPY{1}{\tauy}
\COPY{0}{\pux}
\COPY{7}{\puy}

\Put(-0.05,7.2){$\vdots$}
\Put(0,7){\circle*{0.08}}
\Put(-0.3,7){\tiny{$p_s$}}
\changereferencesystem(0,7)(1,0)(0,1)
\segmento{0}{-1}{\ang}{\rad}
\segmento{-1}{0}{\ang}{\rad}
\segmentosinpunto{0}{1}{\angbisbis}{\rad}
\Put(0.01,0.1){$\ldots$}
\changereferencesystem(0,0.5)(1,0)(0,1)
\segmentosinpunto{0}{1}{\angbisbis}{\rad}
\Put(0,0){\circle*{0.08}}
\Put(-0.2,-0.2){\tiny{$p_{i_0-1}$}}
\segmento{0}{1}{\ang}{\rad}
\Put(-0.2,0){\tiny{$p_{i_0}$}}
\segmento{0}{1}{\ang}{\rad}
\Put(-0.2,0){\tiny{$p_{i_0+1}$}}
\segmento{0}{-1}{\ang}{\rad}
\segmento{-1}{0}{\ang}{\rad}
\Put(-0.1,0){\tiny{$\mathfrak{q}$}}


\referencesystem(0,0)(1,0)(0,1)

\Put(4.95,7.2){$\vdots$}
\Put(5,7){\circle*{0.08}}
\Put(4.7,7){\tiny{$p_s$}}
\changereferencesystem(5,7)(1,0)(0,1)
\segmento{0}{-1}{\ang}{\rad}
\segmento{-1}{0}{\ang}{\rad}
\segmentosinpunto{0}{1}{\angbisbis}{\rad}
\Put(0.01,0.1){$\ldots$}
\changereferencesystem(0,0.5)(1,0)(0,1)
\segmentosinpunto{0}{1}{\angbisbis}{\rad}
\Put(0,0){\circle*{0.08}}
\Put(-0.2,-0.2){\tiny{$p_{i_0-1}$}}
\segmento{0}{1}{\ang}{\rad}
\Put(-0.2,0){\tiny{$p_{i_0}$}}
\segmento{0}{1}{\ang}{\rad}
\Put(-0.2,0){\tiny{$\mathfrak{q}$}}
\segmento{0}{-1}{\ang}{\rad}
\segmento{-1}{0}{\ang}{\rad}
\Put(-0.1,0){\tiny{$p_{i_0+1}$}}

\end{Picture}
\end{center}

\caption{Graph $\tilde{\mathcal E}_{\nu_{i_0+1}}$ ($p_{i_0}, p_{i_0+1}$ and $\mathfrak{q}$ satellite)}
  \label{figura10a}
\end{figure}
\end{proof}

\begin{theorem}
\label{triangulo}
Let $\nu$ be a non-minimal exceptional curve valuation and keep the above notation. The Newton-Okounkov body $\Delta_{\nu}$ is a triangle if and only if the graph $\Gamma_{\nu}^*$ is not connected.
\end{theorem}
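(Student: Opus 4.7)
The starting point is Theorem \ref{53}: $\Delta_\nu=\mathrm{conv}\{O,Q_1,Q_2,Q_3\}$ where $O=(0,0)$, the points $Q_1,Q_2$ share an $x$-coordinate with $Q_2-Q_1=(0,1/\hat\mu(\nu_r))$, and $Q_3=(\hat\mu(\nu_r),c)$; non-minimality forces $\betabarra_{g+1}(\nu_r)/\hat\mu(\nu_r)<\hat\mu(\nu_r)$, so $Q_3$ lies strictly to the right of the common vertical $x=\betabarra_{g+1}(\nu_r)/\hat\mu(\nu_r)$. Elementary planar geometry then yields: $\Delta_\nu$ is a triangle iff the slope of $OQ_3$ lies outside the open interval $(\sigma_1,\sigma_2)$, where $\sigma_1,\sigma_2$ are the slopes of $OQ_1,OQ_2$. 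In the satellite case $\sigma_1=\nu_r(\varphi_\eta)/\betabarra_{g+1}(\nu_r)$ and $\sigma_2=\sigma_1+1/\betabarra_{g+1}(\nu_r)$; in the free case $\sigma_1=0$ and $\sigma_2=1/\betabarra_{g+1}(\nu_r)$. Since $\hat\mu(\nu_r)\deg f=\nu_r(f)$ for the polynomial $f$ defining the supraminimal curve $C$, the triangularity condition is
\[
\upsilon_2(f)/\nu_r(f)\notin(\sigma_1,\sigma_2).
\]

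Next, factor $f=\prod_{k=1}^{s} f_k$ into its analytically irreducible branches at $p$. By additivity of valuations, $\upsilon_2(f)/\nu_r(f)$ is the convex combination, with positive weights $\nu_r(f_k)$, of the branch-wise ratios $r_k:=\upsilon_2(f_k)/\nu_r(f_k)$. The core of the proof is the dichotomy
\begin{itemize}
\item[(i)] if the associated last index $i_k$ from Lemma \ref{curvetas} satisfies $i_k\le r$, then $r_k\le\sigma_1$;
\item[(ii)] if $i_k>r$, then $r_k\ge\sigma_2$.
\end{itemize}
Crucially, whenever Lemma \ref{curvetas} yields a two-curvette decomposition $\nu(f_k)=a_k\nu(\varphi_{i_k})+b_k\nu(\varphi_{j_k})$, the indices $i_k,j_k$ lie in the same connected component of $\Gamma_\nu$, so both fall on the same side of the cut at $E_r$, and the whole branch contributes to a single side of the inequality.

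To establish (i)--(ii), observe that for $i\le r$ the strict transform of $\varphi_i$ meets no point of $\mathcal{C}_\nu$ beyond $p_i$, and in particular no point proximate to $p_r$; thus $\upsilon_2(\varphi_i)=\nu_\eta(\varphi_i)=(\varphi_i,\varphi_\eta)_p$. Inequality (i) then reads
\[
(\varphi_i,\varphi_\eta)_p\,\betabarra_{g+1}(\nu_r)\ \le\ (\varphi_i,\varphi_r)_p\,\nu_r(\varphi_\eta),
\]
which after applying Proposition \ref{felix} reduces to a comparison of $\betabarra_{\rho(i)+1}(\nu_i)/e_{\rho(i)}(\nu_i)$ with the analogous ratios for $\nu_r$ and $\nu_\eta$; this comparison is determined by the relative position of $i,r,\eta$ in the dual graph $\Gamma_{\nu_r}$ and follows from the continued-fraction identities for Puiseux exponents recalled in Section \ref{Puiseux}. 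For $i>r$ the additional contributions $\sum_{p_j\to p_r}\mathrm{mult}_{p_j}(\varphi_i)$ in $\upsilon_2(\varphi_i)$ supply exactly the additive $+1/\betabarra_{g+1}(\nu_r)$ gap built into $\sigma_2-\sigma_1$; the same Noether-formula and proximity computation that produces the vertex $Q_2$ in the proof of Theorem \ref{53} forces $r_k\ge\sigma_2$.

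The proof concludes as follows: disconnectedness of $\Gamma_\nu^*$ exactly says that every branch of $C$ meets only divisors $E_{i_k}$ lying in a single connected component of $\Gamma_\nu$, so by (i)--(ii) all $r_k$ sit on a common side of $(\sigma_1,\sigma_2)$, whence their weighted average does too and $\Delta_\nu$ is a triangle; conversely, connectedness produces at least one branch with $r_k\le\sigma_1$ and one with $r_k\ge\sigma_2$, with strict inequality somewhere by non-minimality of $\nu$, so the average lies strictly inside $(\sigma_1,\sigma_2)$ and $\Delta_\nu$ is a quadrilateral. I expect the main obstacle to be the third step: the branch-wise inequalities are intersection-theoretic statements about curvettes on opposite sides of the cut at $E_r$, and proving them uniformly across the satellite/free cases and the one- vs.\ two-curvette alternatives of Lemma \ref{curvetas} will require careful bookkeeping of the dual graph combinatorics and of the Puiseux-exponent identities.
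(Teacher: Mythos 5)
Your overall architecture is the same as the paper's: reduce triangularity to the condition that the slope of $OQ_3$ avoids the open interval between the slopes of $OQ_1$ and $OQ_2$, decompose the supraminimal curve into analytic branches, use Lemma \ref{curvetas} to reduce each branch to one or two curvettes lying in a single component, and compute the curvette slopes via Proposition \ref{felix} and the continued-fraction identities. However, the core dichotomy (i)--(ii) is wrong as stated, in two ways. First, the partition of curvettes is \emph{not} by the index comparison $i\le r$ versus $i>r$: the two connected components of $\Gamma_{\nu}$ are obtained by cutting the edge joining $E_r$ and $E_{\eta}$, so the infinite component contains $E_{\eta}$ and every vertex beyond it (all of which have index $\le r$) together with all $E_j$, $j>r$, while the finite component containing $r$ is a proper subset of $\{1,\dots,r\}$. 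Concretely, for $i=1$ one has $\upsilon_2(\varphi_1)/\nu_r(\varphi_1)=\betabarra_0(\nu_\eta)/\betabarra_0(\nu_r)$, which in the configuration treated in the paper's proof is the \emph{larger} of the two extreme slopes, contradicting your claim $r_k\le\sigma_1$; and a supraminimal curve with one branch meeting $E_{\ell_{g^*}}$ (index $\le r$) and another meeting some $E_j$ with $j>r$ would be declared quadrangular by your rule, whereas both vertices lie in the same component, $\Gamma_\nu^*$ is disconnected, and $\Delta_\nu$ is a triangle. The correct statement, which the paper proves case by case (Cases 1, 2.1, 2.2), is that the ratio equals \emph{exactly} $\betabarra_0(\nu_\eta)/\betabarra_0(\nu_r)$ when $r\not\preccurlyeq i$ and exactly $\betabarra_{g^*}(\nu_\eta)/\betabarra_{g^*}(\nu_r)$ when $r\preccurlyeq i$ (in $\Gamma_{\nu_k}$ for $k$ large); in particular the proximity correction $\delta$ in Case 2.2 lands the value precisely on the extreme ray rather than merely pushing it past $\sigma_2$.

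Second, even granting a corrected partition, inequalities $r_k\le\sigma_1$ or $r_k\ge\sigma_2$ are not enough for your converse direction: a positively weighted average of numbers, some $\le\sigma_1$ and some $\ge\sigma_2$, need not lie in $(\sigma_1,\sigma_2)$ (take one term far below $\sigma_1$ with large weight). You need the exact equalities $r_k\in\{\sigma_1,\sigma_2\}$ — obtainable either by the paper's direct computation or by combining your inequalities with the containment $\nu(f_k)\in\mathfrak{C}(\nu)$, which forces $r_k\in[\sigma_1,\sigma_2]$ — after which the mediant of two distinct extremes is automatically strictly interior and no appeal to non-minimality is needed at that step. As written, the proposal therefore has a genuine gap in both directions of the equivalence.
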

\begin{proof}
 Let $g^*$ be the number of Puiseux pairs of $\nu$. Assume that $C$ is defined by $f=0$ and write $\nu(f)/\deg (f) = \left(\hat{\mu} (\nu_r), c \right)$. Notice that, by Theorem \ref{53}, the set of vertices of $\Delta_{\nu}$ is contained in $$\{(0,0), Q_1, Q_2, Q_3=\left(\hat{\mu} (\nu_r), c \right)\}.$$

First suppose that $p_{r+1}$ is satellite. By Lemma \ref{cont}, the point $Q_1$  (respectively, $Q_2$) belongs to the line passing through the origin and with slope $\betabarra_0(\nu_{\eta})/\betabarra_0(\nu_{r})$ (respectively, $\betabarra_{g^*}(\nu_{\eta})/\betabarra_{g^*}(\nu_{r})$), or viceversa; hence $\Delta_{\nu}$ is a triangle if and only if $$c/\hat{\mu} (\nu_r)\in \{\betabarra_0(\nu_{\eta})/\betabarra_0(\nu_{r}), \betabarra_{g^*}(\nu_{\eta})/\betabarra_{g^*}(\nu_{r})\},$$ i.e., if and only if, there is an index $j\in \{0,g^*\}$ such that, for each analytic branch of $C$ at $p$ (say, with equation $h=0$), $\upsilon(h)/\nu_r(h)=\betabarra_j(\nu_{\eta})/\betabarra_j(\nu_{r})$, where $\nu(h)=(\nu_r(h),\upsilon(h))$. Let us prove that this last condition holds if and only if all the edges of $\Gamma_{\nu}^*$ that are incident with $C$ meet $\Gamma_{\nu}$ at the same connected component (that is, $\Gamma_{\nu}^*$ is not connected).

Let us denote by $\Gamma_{\nu}(r)$ the connected component of $\Gamma_{\nu}$ that contains the vertex $r$ (which is finite), and by $\Gamma_{\nu}'(r)$ the other one. Fix a sufficiently large natural number $k>r$ and consider the dual graph $\Gamma_{\nu_k}$ of the divisorial valuation $\nu_k$. Let $h=0$ be the equation of an analytic branch of $C$ at $p$ and assume that its strict transform in $X_k$ meets the strict transform of the exceptional divisor $E_i$. Notice that, by Lemma \ref{prrrt}, we can assume without lost of generality that $h=\varphi_i$.

Assume that $r\preccurlyeq \eta$ in $\Gamma_{\nu_k}$ (the reasoning is similar otherwise). This means that the part of the graph $\Gamma_\nu$ corresponding to the two last Puiseux pairs  has the shape that is shown in Figure \ref{figtheorem}.

\begin{figure}[h]
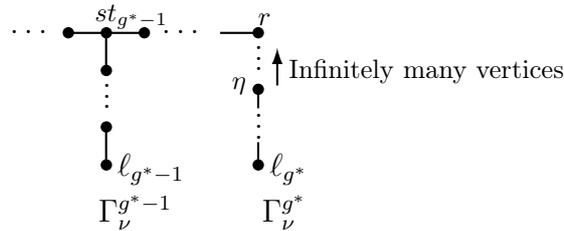


\begin{center}
\setlength{\unitlength}{0.5cm}%
\begin{Picture}(9,0)(24,8)
\thicklines


\put(9,6){$\;\;\ldots\;\;$}
\xLINE(11,6)(12,6)
\Put(11,6){\circle*{0.3}}
\Put(12,6){\circle*{0.3}}
\xLINE(12,6)(12,5)
\Put(12,5){\circle*{0.3}}
\Put(11.9,4){$\vdots$}
\xLINE(12,3.5)(12,2.5)
\Put(12,3.5){\circle*{0.3}}
\Put(12,2.5){\circle*{0.3}}
\Put(11.8,1){$\Gamma^{g^*-1}_\nu$}

\put(12.3,2.2){$\ell_{g^*-1}$}

\Put(11.7,6.3){\footnotesize $st_{g^*-1}$}



\xLINE(12,6)(13,6)
\Put(13,6){\circle*{0.3}}
\Put(13,6){$\;\;\ldots$}
\xLINE(15,6)(16,6)
\Put(16,6){\circle*{0.3}}
\Put(16,6.2){\footnotesize $r$}

\Put(15.9,5){$\vdots$}
\xLINE(16,3.1)(16,2.5)
\Put(16,2.5){\circle*{0.3}}



\Put(16,4.5){\circle*{0.3}}
\xLINE(16,4.5)(16,4)
\xVECTOR(16.5,4.6)(16.5,5.6)

\Put(15.3,4.5){\footnotesize $\eta$}

\Put(15.9,3.2){$\vdots$}

\Put(16.8,4.8){\footnotesize Infinitely many vertices }

\put(16.3,2.2){$\ell_{g^*}$}
\Put(16.1,1){$\Gamma^{g^*}_\nu$}

\end{Picture}
\end{center}
  \caption{Dual graph of $\Gamma_{\nu}$ if $r\preccurlyeq \eta$ in $\Gamma_{\nu_k}$ (proof of Theorem \ref{triangulo})}
  \label{figtheorem}
\end{figure}

We will distinguish two cases:\\

\noindent \emph{Case 1}: $r\not\preccurlyeq i$ in $\Gamma_{\nu_k}$. Here it must be $i\leq r$.  The Enriques diagram ${\mathcal E}_{\nu}$ (labeled with the sequence of values of $\nu$) and the Noether formula (\ref{Noether}) prove that $\nu(\varphi_i)=(\nu_r(\varphi_i),\nu_{\eta}(\varphi_i) )$. Moreover $\rho(i)<g^*$. Then, we consider two subcases:\\

\noindent \emph{Case 1.1}: $p_i$ is free and $i<\ell_{\rho(i)+1}$ which implies that $i\leq \eta$. Applying Part (a) of Proposition \ref{felix}, it holds that
$$\frac{\nu_{\eta}(\varphi_i)}{\nu_{r}(\varphi_i)}=\frac{e_{\rho(i)-1}(\nu_i) \bar{\beta}_{\rho(i)}(\nu_\eta) + d e_{\rho(i)} (\nu_i) e_{\rho(i)}(\nu_\eta)}{e_{\rho(i)-1}(\nu_i) \bar{\beta}_{\rho(i)}(\nu_r) + d e_{\rho(i)} (\nu_i) e_{\rho(i)}(\nu_r)}=\frac{\betabarra_0(\nu_{\eta})}{\betabarra_0(\nu_r)},$$
 where $d$ is the length of the path $[st_{\rho(i)}+1,i]$ in $\Gamma_{\nu_k}$ and the last equality holds because, as $\rho(i)<g^*$, one gets the following chain of equalities
 $$\frac{\betabarra_{\rho(i)}(\nu_{\eta})}{\betabarra_{\rho(i)}(\nu_{r})}=\frac{e_{\rho(i)}(\nu_{\eta})}{e_{\rho(i)}(\nu_{r})}=\frac{\betabarra_{0}(\nu_{\eta})}{\betabarra_{0}(\nu_{r})}.$$

 \noindent \emph{Case 1.2}: Either $p_i$ is satellite or $i=\ell_{\rho(i)+1}$ (notice that the last condition cannot happen if $\rho(i)=g^*-1$ because $r\preccurlyeq \ell_{g^*}$ in $\Gamma_{\nu_k}$). By Part (b) of Proposition \ref{felix} we have that
 $$\frac{\nu_{\eta}(\varphi_i)}{\nu_{r}(\varphi_i)}=\frac{e_{\rho(i)}(\nu_\eta) \betabarra_{\rho(i)+1}(\nu_i)}{e_{\rho(i)}(\nu_r) \betabarra_{\rho(i)+1}(\nu_i)}=\frac{\betabarra_{0}(\nu_\eta)}{\betabarra_{0}(\nu_r)}.$$

As a consequence, we have deduced, for this Case 1, that the slope of the line of $\mathbb{R}^2$ joining the origin and  $\nu(\varphi_i)$ is $\betabarra_{0}(\nu_\eta)/\betabarra_{0}(\nu_r)$.  So, it only remains to study the following case.\\

\noindent \emph{Case 2}: $r \preccurlyeq i$ in $\Gamma_{\nu_k}$. Notice that this implies that $\rho(i)=g^*-1$. Again we distinguish two subcases:\\

\noindent \emph{Case 2.1}: $\eta \preccurlyeq i$ in $\Gamma_{\nu_k}$. As before, one can deduce that $\nu(\varphi_i)=(\nu_r(\varphi_i), \nu_{\eta}(\varphi_i))$. Moreover we can apply Part (b) of Proposition \ref{felix} obtaining that
$$\frac{\nu_{\eta}(\varphi_i)}{\nu_{r}(\varphi_i)}=\frac{e_{g^*-1}(\nu_i) \betabarra_{g^*}(\nu_\eta)}{e_{g^*-1}(\nu_i) \betabarra_{g^*}(\nu_r)}=\frac{ \betabarra_{g^*}(\nu_\eta)}{ \betabarra_{g^*}(\nu_r)}.$$

\noindent \emph{Case 2.2}: $i\preccurlyeq \eta$ in $\Gamma_{\nu_k}$, that is, $p_i$ corresponds with one of the infinitely many vertices between $\eta$ and $r$ in the dual graph $\Gamma_{\nu}$ (see Figure \ref{figtheorem}). Consider the continued fraction expansion of the Puiseux exponent $\beta'_{g^*}(\nu_r)=[a_0,a_1,\ldots,a_n]$, where $n=\sigma(\nu_r)$. Using Enriques diagrams we deduce that $\beta'_{g^*}(\nu_\eta)=[a_0,a_1,\ldots,a_{n-1}]$ and $\beta'_{g^*}(\nu_{i})=[a_0,a_1,\ldots,a_n,\delta]$ for some positive integer $\delta$. By (\ref{Delta}) and \cite[Theorem 7.5]{niven}, we have that
$$\frac{\betabarra_{g^*}(\nu_{i})-n_{g^*-1}(\nu_i)\betabarra_{g^*-1}(\nu_i)}{e_{g^*-1}(\nu_i)}-\frac{\betabarra_{g^*}(\nu_{\eta})-n_{g^*-1}(\nu_\eta)\betabarra_{g^*-1}(\nu_\eta)}{e_{g^*-1}(\nu_\eta)}=\frac{(-1)^{n+1}\delta}{e_{g^*-1}(\nu_i)e_{g^*-1}(\nu_\eta) }.$$
From this equality, and taking into account that $n_{g^*-1}(\nu_i)=n_{g^*-1}(\nu_\eta)$ and that $
\betabarra_{g^*-1}(\nu_\eta)/\betabarra_{g^*-1}(\nu_i)=e_{g^*-2}(\nu_\eta)/e_{g^*-2}(\nu_i)$, it follows that
$$\delta=(-1)^{n+1}\left[ e_{g^*-1} (\nu_\eta)\betabarra_{g^*}(\nu_i) -e_{g^*-1} (\nu_i)\betabarra_{g^*}(\nu_\eta)\right].$$
Since $r\preccurlyeq i\preccurlyeq \eta$, it holds that $\beta'_{g^*}(\nu_{r})\leq \beta'_{g^*}(\nu_{i})\leq \beta'_{g^*}(\nu_{\eta})$. Then, the properties of the continued fractions allow us to prove that $n=\sigma_{g^*}(\nu_r)$ is even and, therefore,
$$\delta= e_{g^*-1} (\nu_i)\betabarra_{g^*}(\nu_\eta)-e_{g^*-1} (\nu_\eta)\betabarra_{g^*}(\nu_i).$$

With the help of the Enriques diagram of $\nu$ (labeled with the sequence of values of $\nu$) it is easily seen that $\nu(\varphi_i)=(\nu_r(\varphi_i),\nu_\eta(\varphi_i)+\delta )$. Then, by Part (b) of Proposition \ref{felix},
$$\frac{\nu_\eta(\varphi_i)+\delta}{\nu_r(\varphi_i)}=\frac{e_{g^*-1}(\nu_i)\betabarra_{g^*}(\nu_\eta)}{e_{g^*-1}(\nu_i)\betabarra_{g^*}(\nu_r)}=\frac{\betabarra_{g^*}(\nu_\eta)}{\betabarra_{g^*}(\nu_r)}.$$\\

Summarizing, we have proved that if $r\not \preccurlyeq i$ (i.e., if the divisor $E_i$ corresponds to a vertex in $\Gamma_{\nu}(r)$), then the slope of the line that joins $\nu(\varphi_i)$ and the origin is  $\betabarra_{0}(\nu_\eta)/\betabarra_{0}(\nu_r)$, and it is $\betabarra_{g^*}(\nu_\eta)/\betabarra_{g^*}(\nu_r)$ otherwise. This proves that, when $p_{r+1}$ is satellite, $c/\hat{\mu} (\nu_r)\in \{\betabarra_0(\nu_{\eta})/\betabarra_0(\nu_{r}), \betabarra_{g^*}(\nu_{\eta})/\betabarra_{g^*}(\nu_{r})\}$ if and only if the strict transforms of all the analytic branches of $C$ at $p$ meet the dual graph $\Gamma_{\nu}$ at the same connected component, that is, if and only if $\Gamma_{\nu}^*$ is not connected.\\

To conclude the proof, assume that $p_{r+1}$ is free. Then, the result can be proved using a similar reasoning as above after taking into account the part (b) in Theorem \ref{53} and the fact that $\nu(\varphi_i)=({\rm mult}_{p_r}(\varphi_i) \;\betabarra_{g^*} (\nu_r),m)$, where $m=\max\{{\rm mult}_{p_r}(\varphi_i),0\}$.
\end{proof}

An immediate consequence of the above result is the following.

\begin{corollary}
Under the same assumptions of Theorem \ref{triangulo}, it holds that if the supraminimal curve $C$ has only one analytic branch at $p$, then the Newton-Okounkov body $\Delta_{\nu}$ is a triangle.
\end{corollary}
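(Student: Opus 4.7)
The plan is to reduce immediately to Theorem \ref{triangulo}, which tells us that $\Delta_{\nu}$ is a triangle if and only if $\Gamma_{\nu}^*$ is not connected. Recall that $\Gamma_{\nu}$ already has exactly two connected components (one finite, one infinite), by the discussion in Subsection \ref{32}. The graph $\Gamma_{\nu}^*$ is obtained from $\Gamma_{\nu}$ by adjoining a single new vertex $v_C$ and connecting it, by an edge, to each vertex of $\Gamma_{\nu}$ corresponding to a divisor $E_i$ whose strict transform meets that of $C$ on $X_k$ for all $k$ large enough. It therefore suffices to show that, when $C$ has only one analytic branch at $p$, the vertex $v_C$ is joined to $\Gamma_{\nu}$ by edges meeting only one of its two connected components.

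The key step is to show that each analytic branch of $C$ contributes exactly one edge to $\Gamma_{\nu}^*$. Fix such a branch and let $\{q_j\}_{j\geq 1}$ be its cluster of infinitely near points. Since an algebraic branch is not contained in any exceptional curve, the cluster $\{q_j\}$ cannot coincide with $\mathcal{C}_{\nu}$ indefinitely (the tail of $\mathcal{C}_{\nu}$ sits on the strict transform of $E_r$ because $p_i \to p_r$ for every $i > r$). Hence there is a minimal index $j_0$ with $q_{j_0} \neq p_{j_0}$ while $q_i = p_i$ for $i < j_0$. For every $k \geq j_0$, the strict transform of the branch on $X_k$ still passes through $q_{j_0}$, which lies on the strict transform of $E_{j_0 - 1}$ only; no later center of blow-up in $\mathcal{C}_{\nu}$ is equal to any of the points $q_{j_0}, q_{j_0+1}, \ldots$, so the strict transforms of the subsequent $E_i$ do not pass through $q_{j_0}$ on $X_k$. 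Therefore this branch contributes exactly one edge in $\Gamma_{\nu}^*$, joining $v_C$ to the vertex of $E_{j_0 - 1}$.

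Now if $C$ has a single analytic branch at $p$, then only one edge is added in passing from $\Gamma_{\nu}$ to $\Gamma_{\nu}^*$, attaching $v_C$ to one of the two connected components of $\Gamma_{\nu}$. The other component of $\Gamma_{\nu}$ remains as a separate component of $\Gamma_{\nu}^*$, so $\Gamma_{\nu}^*$ is not connected. By Theorem \ref{triangulo}, $\Delta_{\nu}$ is a triangle.

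The only non-routine step is the careful verification that for each branch the intersection with the exceptional locus stabilizes to a single divisor, i.e., that the branch's cluster eventually diverges from $\mathcal{C}_{\nu}$; this is the main obstacle but is resolved by the observation above, since an algebraic branch of a curve in $\mathbb{P}^2$ has a finite Puiseux expansion and cannot be tangent to an exceptional divisor indefinitely.
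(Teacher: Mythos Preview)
Your approach is exactly the one the paper intends: the statement is presented there as an immediate consequence of Theorem~\ref{triangulo}, and you are spelling out why a single branch forces $\Gamma_\nu^*$ to remain disconnected.

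There is, however, a small gap in your key step. You assert that the point $q_{j_0}$ ``lies on the strict transform of $E_{j_0-1}$ only''. This need not be true: $q_{j_0}$ can be a satellite point, i.e.\ it may equal $E_{j_0-1}\cap E_s$ for some $s<j_0-1$ (this happens whenever $p_{j_0-1}$ is proximate to $p_s$ and the branch heads toward that intersection rather than toward $p_{j_0}$). In that case the strict transform of the branch on $X_k$ meets \emph{two} exceptional divisors for all large $k$, so the branch contributes two edges from $v_C$, not one.

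The argument is easily repaired. If $q_{j_0}=E_{j_0-1}\cap E_s$, then because $q_{j_0}\neq p_{j_0}$ and (as you correctly observe) no subsequent centre $p_{j_0},p_{j_0+1},\ldots$ ever equals $q_{j_0}$, this intersection point is never blown up in the sequence $\mathcal C_\nu$. Hence the strict transforms of $E_{j_0-1}$ and $E_s$ still meet on $X_k$ for every large $k$, so the vertices $j_0-1$ and $s$ are adjacent in $\Gamma_\nu$ and therefore lie in the same connected component. Thus, although one branch may contribute up to two edges, both edges land in a single component of $\Gamma_\nu$, and the rest of your argument goes through unchanged.

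A minor stylistic remark: your justification that the branch cluster must diverge from $\mathcal C_\nu$ (``finite Puiseux expansion'') is a bit loose. The clean reason is the proximity inequality: if the strict transform of the branch passed through every $p_i$ with $i>r$, then $\mathrm{mult}_{p_r}(h)\geq \sum_{i>r}\mathrm{mult}_{p_i}(h)$ would force infinitely many positive multiplicities to be bounded by a finite number, which is impossible.
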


\section{Newton-Okounkov bodies of non-positive at infinite valuations}
\label{NPI}

Keeping the notations of the preceding sections, set $(X:Y:Z)$ projective coordinates in $\mathbb{P}^2$, assume that the coordinates of the point $p$ are $(1:0:0)$ and take affine coordinates $u=Y/X$ and $v=Z/X$ around $p$. Also, consider coordinates $x=X/Z$ and $y=Y/Z$ in the affine chart defined by $Z\neq 0$. Then a divisorial valuation as above (say $\nu_r$) is called to be {\it non-positive at infinity} whenever $r\geq 2$, the line $L$ with equation $Z=0$ passes through $p_1=p$ and $p_2$, and $\nu_r (h) \leq 0$ for all $h \in \mathbb{C}[x,y] \setminus \{0\}$.

\begin{definition}
{\rm A exceptional curve valuation $\nu$ is named  {\it non-positive at infinity} when its attached divisorial valuation $\nu_r$ is  non-positive at infinity.}
\end{definition}

When $\nu_r$ is non-positive at infinity, it holds that  $\hat{\mu}(\nu_r) = \nu_r (v)$ and the line $Z=0$ is a supraminimal curve \cite[Theorem 1]{GM}. This result in \cite{GM} also provides an easy way to decide whether $\nu_r$ is non-positive at infinity. In fact, $\nu_r$ is non-positive at infinity if and only if $\nu_r (v)^2 \geq \betabarra_{g+1} (\nu_r)$. In view of Theorem \ref{triangulo} and applying Theorem \ref{53}, we deduce the following result, which gives {\it explicitly}  the vertices of the Newton-Okounkov body of this large family of exceptional curve valuations.

\begin{corollary}
\label{nonpos}
Let $E_{\bullet}:= \left\{ X=X_r \supset E_r \supset \{q\} \right\}$ be a flag whose divisor $E_r$ defines a non-positive at infinity valuation. Then the Newton-Okounkov body of the exceptional curve valuation $\nu = \nu_{E_\bullet}$ is a triangle whose vertices are $\left(0,0\right)$, $Q_2$ and $Q_3$,  where the latter points have the following coordinates:

\begin{itemize}
\item[(a)] If $\nu$ is not minimal:
\[
Q_2= \left(\frac{1}{\mathrm{vol(\nu_r)} \nu_r(v)}, \frac{1}{\nu_r(v)}\right) \;\;{\mbox and} \;\;Q_3=
\left(\nu_r(v),0\right),
\]
whenever $q=p_{r+1}$ is a free point in $E_r$.

\[
 Q_2= \left(\frac{1}{\mathrm{vol(\nu_r)} \nu_r(v)}, \frac{\nu_r(\varphi_\eta)}{\nu_r(v)}\right) \;\; {\mbox and} \;\;Q_3=
\left(\nu_r(v), \nu_\eta(v)\right),
\]
whenever $q$ is a satellite point in $E_\eta \cap E_r$ and $\eta \preccurlyeq r$.

\[
Q_2= \left(\frac{1}{\mathrm{vol(\nu_r)}\nu_r(v)}, \frac{\nu_r(\varphi_\eta)+1}{\nu_r(v)}\right)
\;\; {\mbox and} \;\; Q_3=  \left(\nu_r(v), \nu_\eta(v)\right),
\]
 otherwise.

\item[(b)] If $\nu$ is minimal

\[
  Q_2 = \left( \nu_r(v), 0 \right) \;\; \mbox{and} \;\; Q_3 = \left( \nu_r(v), \frac{1}{\nu_r(v)} \right),
\]
whenever $q$ is a free point in $E_r$.

\[
Q_2 = \left(\nu_r(v), \frac{1-\mathrm{vol} \left( \nu_r \right)}{\mathrm{vol} \left( \nu_r \right)\nu_r(v)}\right)  \;\; {\mbox and} \;\; Q_3 = \left(\nu_r(v),  \frac{1}{\mathrm{vol} \left( \nu_r \right)\nu_r(v)}\right),
\]
otherwise.

\end{itemize}
\end{corollary}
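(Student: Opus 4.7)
The plan is to combine the general descriptions of $\Delta_\nu$ in Theorem~\ref{53} (non-minimal case) and Theorem~\ref{nobminimal} (minimal case) with two structural inputs specific to non-positive at infinity valuations coming from \cite[Theorem~1]{GM}: one has $\hat{\mu}(\nu_r)=\nu_r(v)$, and when $\nu_r$ is moreover not minimal the line $v=0$ (equivalently the line $Z=0$) is the unique supraminimal curve of $\nu_r$. Together with the identity $\bar{\beta}_{g+1}(\nu_r)=1/\mathrm{vol}(\nu_r)$ from the Remark in Subsection~\ref{Puiseux}, these two facts produce the common factors $\nu_r(v)$ and $1/(\mathrm{vol}(\nu_r)\nu_r(v))$ appearing throughout the statement.

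For the non-minimal case I would first note that the supraminimal line $v=0$ is smooth at $p$, so it has a single analytic branch there, and the Corollary right after Theorem~\ref{triangulo} guarantees that $\Delta_\nu$ is a triangle. Applying Theorem~\ref{53} to the degree-one polynomial $f=v$, one has $\nu(f)/\deg(f)=(\nu_r(v),\upsilon_2(v))$, so the point $Q_3$ of Theorem~\ref{53} is $(\nu_r(v),c)$ with $c=\upsilon_2(v)$. I would evaluate $c$ via the decomposition $\upsilon_2(f)=\nu_\eta(f)+\mathrm{mult}_q(f)$ (satellite case) or directly $\upsilon_2(f)=\mathrm{mult}_q\bigl(\pi^*f/z_r^{\upsilon_1(f)}\bigr)$ (free case) obtained around the Proposition following Corollary~\ref{ppp}; the key geometric input (which is where the non-positive at infinity hypothesis enters) is that the strict transform of $v=0$ at $X_r$ meets $E_r$ in a single smooth point distinct from $q$ in the configurations of the statement, giving $c=0$ in the free case and $c=\nu_\eta(v)$ in both satellite subcases. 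Since $\Delta_\nu$ is a triangle, exactly one of the two middle points $Q_1,Q_2$ of Theorem~\ref{53} must be collinear with $(0,0)$ and $Q_3$; the computations in the proof of Theorem~\ref{triangulo} (using Proposition~\ref{felix}) show that which of the two is collinear swaps exactly when the relative order $\eta\preccurlyeq r$ versus $\eta\not\preccurlyeq r$ switches, so the surviving vertex is the one written as $Q_2$ in the Corollary in each subcase.

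For the minimal case I would invoke Theorem~\ref{nobminimal}, so that $\Delta_\nu$ equals the triangle $\mathfrak{C}(\nu)\cap\mathfrak{H}(\nu)$ of Proposition~\ref{tri}. The free subcase is a direct substitution of $\hat{\mu}(\nu_r)=\nu_r(v)$, $\bar{\beta}_{g+1}(\nu_r)=1/\mathrm{vol}(\nu_r)$, together with the minimality relation $\nu_r(v)^2=1/\mathrm{vol}(\nu_r)$. For the satellite subcase one must rewrite the ratios $\bar{\beta}_0(\nu_\eta)/\bar{\beta}_0(\nu_r)$ and $\bar{\beta}_g(\nu_\eta)/\bar{\beta}_g(\nu_r)$ in terms of $\mathrm{vol}(\nu_r)$ and $\nu_r(v)$; Lemma~\ref{45} already tells us the two ratios differ by $1/\bar{\beta}_{g+1}(\nu_r)=\mathrm{vol}(\nu_r)$, and a Noether-type expansion of $\nu_\eta(v)$ using that the line $v=0$ shares with $\nu_\eta$ the initial chain $p_1,p_2$ identifies one of them with $\nu_\eta(v)/\nu_r(v)$, which after a short computation produces the two claimed height values $(1-\mathrm{vol}(\nu_r))/(\mathrm{vol}(\nu_r)\nu_r(v))$ and $1/(\mathrm{vol}(\nu_r)\nu_r(v))$.

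The main obstacle will be the case-by-case evaluation of $c=\upsilon_2(v)$ and the identification of the surviving middle vertex in the non-minimal case. Both amount to tracking how the strict transform of the line $v=0$ and of the curvette $\varphi_\eta$ interact with the infinitely near points $p_{r+1},p_{r+2},\ldots$ above $p_r$; this is controlled by Proposition~\ref{felix} together with the proximity equalities~\eqref{proximity}, and is the place where the non-positive at infinity hypothesis is genuinely used beyond the two general inputs imported from \cite{GM}.
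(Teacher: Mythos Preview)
Your approach is essentially the same as the paper's. The paper does not give a separate proof of Corollary~\ref{nonpos}; it simply declares the result to be a consequence of Theorem~\ref{triangulo} and Theorem~\ref{53} (together with the facts $\hat\mu(\nu_r)=\nu_r(v)$ and ``$Z=0$ is supraminimal'' imported from \cite{GM}), and for the minimal case one falls back on Theorem~\ref{nobminimal}. That is exactly what you do, only you spell out the case analysis that the paper leaves implicit. Your observation that the Corollary's ``$Q_2$'' is, depending on whether $\eta\preccurlyeq r$ or not, the point labelled $Q_1$ or $Q_2$ in Theorem~\ref{53} is correct and matches what the paper silently uses.

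One small point is slightly overstated in your sketch: you attribute the fact that the strict transform of $v=0$ on $X_r$ avoids $q$ to the non-positive at infinity hypothesis. That hypothesis by itself does not prevent the line from passing through $p_r$ (or even through $q=p_{r+1}$); the line is only required to go through $p_1,p_2$. What actually yields $c=0$ (free case) or $c=\nu_\eta(v)$ (satellite cases) is simply the Noether formula together with the Proposition giving $\upsilon_2(f)=\nu_\eta(f)+\sum_{p_i\to p_r}\mathrm{mult}_{p_i}(f)$, once one knows the line does not pass through any $p_i$ with $i>r$. This holds whenever the line's strict transform on $X_r$ does not hit the chosen point $q$, which is the generic situation and the one the paper has in mind (cf.\ the explicit assumption ``$L$ does not pass through $p_3$'' in Example~1); the paper is equally silent about this point, so your derivation is faithful to it.
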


\begin{remark}
\label{la53}
{\rm Corollary \ref{nonpos} was announced in \cite{crm}. We have deduced it from Theorem \ref{53}. However, in \cite{crm} we deduced the same result computing directly the Zariski decompositions of the $\mathbb{R}$-divisors $D_t:=H- t E_r$ for all $t\in [0,\nu_r(v)]$ and applying then Theorem 6.4 of \cite{LM}.  For the sake of completeness we show now the decompositions $D_t=P_t+N_t$, where $P_t$ is the positive part and $N_t$ the negative part.

Let us begin with non-minimal exceptional curve valuations which are non-positive at infinity. This means that the valuation $\nu_r$ satisfies the inequality
 $\nu_r(v)^2\geq \betabarra_{g+1} (\nu_r)$ (see \cite{GM}).

For $i=1, 2, \ldots,r$, and keeping notation as above, consider the divisor on $X$
$$\mathbb{D}_i:=\nu_i(v)H -\sum_{i=1}^r \nu_i (\mathfrak{m}_i) E_i^*,$$
where $\nu_i$ is the divisorial valuation defined by the exceptional divisor $E_i$  and $E_i^*$ the total transform on $X$ of $E_i$.

Now, if $t$ is a real number in the interval $[0,\nu_r(v)]$ we have:
\begin{itemize}
\item[(a)] If $t\leq \betabarra_{g+1} (\nu_r) /\nu_r (v)$, then
$$P_t=b_0(t) H +b_r(t)\mathbb{D}_r\;\;\mbox{ and }\;\; N_t=\sum_{i=1}^{r-1} a_i(t)E_i,$$
where
$$b_0(t)=1-\frac{\nu_r(v)}{\betabarra_{g+1}(\nu_r)}t,\;\;\;\; b_r(t)=\frac{1}{\betabarra_{g+1}(\nu_r)}t,\;\;\;\; a_i(t)=\frac{\nu_r(\varphi_i)}{\betabarra_{g+1}(\nu_r)}t, \;\;\;1\leq i\leq r-1.$$

\item[(b)] If $t\geq\betabarra_{g+1} (\nu_r)/\nu_r(v)$, then
$$P_t=b_r(t)\mathbb{D}_r+c(t) (H -L^*)\;\;\mbox{ and } N_t= a_0(t)L+\sum_{i=1}^{r-1} a_i(t)E_i,$$
where, as above, $L^*$ means total transform on $X$ and $L$ stands for the strict transform on $X$ of the line $L$ given by $Z=0$, and
$$b_r(t)=\frac{\nu_r(v)-t}{\nu_r(v)^2-\betabarra_{g+1}(\nu_r)},\;\;\;\;c(t)=
\frac{\nu_r(v)t-\betabarra_{g+1}(\nu_r)}{\nu_r(v)^2-\betabarra_{g+1}(\nu_r)},\;\;\;a_0(t)=\frac{\nu_r(v)t-\betabarra_{g+1}(\nu_r)}{\nu_r(v)^2-\betabarra_{g+1}(\nu_r)},$$ $$a_i(t)=\frac{\nu_i(v)(\nu_r(v)t-\betabarra_{g+1}(\nu_r))+\nu_r(\varphi_i)(\nu_r(v)-t)}{\nu_r(v)^2-\betabarra_{g+1}(\nu_r)}, \;\;\;1\leq i\leq r-1.$$
\end{itemize}

Finally assume that $\nu$ is a minimal exceptional curve valuation which is non-positive at infinity, i.e., $\nu_r(v)^2=\betabarra_{g+1}(\nu_r)$. Then, for real numbers $t$ in $[0,\nu_r(v)]$, it holds
$$P_t=b_0(t) H+b_r(t)\mathbb{D}_r\;\;\mbox{ and }\;\; N_t=\sum_{i=1}^{r-1} a_i(t) E_i,$$
where
$$b_0(t)=\frac{\nu_r(v)-t}{\nu_r(v)},\;\;\;\; b_r(t)=\frac{t}{\nu_r(v)^2},\;\;\;\; a_i(t)=\frac{\nu_r(\varphi_i)}{\nu_r(v)^2}t, \;\;\;1\leq i\leq r-1.$$
}
\end{remark}

\section{Examples}
We finish this paper using our results to compute several examples of Newton-Okounkov bodies of exceptional curve valuations.\\

\noindent {\bf Example 1.} Let $\nu_r$ be a divisorial valuation centered at the local ring of a point $p$ of $\mathbb{P}^2$ whose sequence of maximal contact values is $\{\betabarra_j (\nu_r)\}_{j=0}^3 = \{9,30,101,303\}$. Let ${\mathcal C}_{\nu_r} = \{p_i\}_{i=1}^{12}$ (with $p=p_1$) be its cluster of centers and let $L$ be the projective line passing through $p$ and whose strict transform passes through $p_2$. By means of an affine change of coordinates, if necessary, we can assume that $L$ is defined, in projective coordinates $(X:Y:Z)$, by the equation $Z=0$. Moreover, assume that $L$ does not pass through $p_3$.

Then, in the notation of Section \ref{NPI} we have that $\nu_r(v)=18$ and $\nu_r(v)^2= 324 > 303 = \betabarra_3 (\nu_r)$. Thus $\nu_r$ is non-positive at infinity by \cite[Theorem 1]{GM}. Let  $\nu = \nu_{E_\bullet}$ be the valuation defined by the flag $E_\bullet = \left\{ X=X_{12} \supset E_{12}\supset \{q=p_{13}\}\right\}$, where $q \in E_{10} \cap E_{12}$.  The Enriques diagram ${\mathcal E}_{\nu}$ of $\nu$ is displayed in Figure \ref{fig11}.

\begin{figure}
\begin{center}

\newcommand{\norma}[2]{%
\SQUARE{#1}{\m}
\SQUARE{#2}{\n}
\ADD{\m}{\n}{\k}
\SQUAREROOT{\k}{\norm}
}


\newcommand{\arco}[6]{%
\norma{#1}{#2}
\DIVIDE{#2}{\norm}{\tangentey}
\DIVIDE{#1}{\norm}{\tangentex}
\COPY{\tangentey}{\ortogonalx}
\COPY{\tangentey}{\centrox}
\SUBTRACT{0}{\tangentex}{\ortogonaly}
\MULTIPLY{\centrox}{#6}{\centrox}
\MULTIPLY{\ortogonaly}{#6}{\centroy}
\ADD{#3}{\centrox}{\centrox}
\ADD{#4}{\centroy}{\centroy}
\changereferencesystem(\centrox,\centroy)(\ortogonalx,\ortogonaly)(\tangentex,\tangentey)
\radiansangles
\SUBTRACT{\numberPI}{#5}{\angulo}
\ellipticArc{#6}{#6}{\angulo}{\numberPI}
\COS{\angulo}{\puntox}
\SIN{\angulo}{\puntoy}
\MULTIPLY{#6}{\puntox}{\puntox}
\MULTIPLY{#6}{\puntoy}{\puntoy}
\Put(\puntox,\puntoy){\circle*{0.08}}
\SUBTRACT{0}{\puntox}{\direcciony}
\COPY{\puntoy}{\direccionx}
\norma{\direccionx}{\direcciony}
\DIVIDE{\direccionx}{\norm}{\direccionx}
\DIVIDE{\direcciony}{\norm}{\direcciony}
\COPY{\direcciony}{\ortogonalx}
\COPY{\direccionx}{\ortogonaly}
\SUBTRACT{0}{\ortogonaly}{\ortogonaly}
\changereferencesystem(\puntox,\puntoy)(\ortogonalx,\ortogonaly)(\direccionx,\direcciony)
}

\newcommand{\arcosinpunto}[6]{%
\norma{#1}{#2}
\DIVIDE{#2}{\norm}{\tangentey}
\DIVIDE{#1}{\norm}{\tangentex}
\COPY{\tangentey}{\ortogonalx}
\COPY{\tangentey}{\centrox}
\SUBTRACT{0}{\tangentex}{\ortogonaly}
\MULTIPLY{\centrox}{#6}{\centrox}
\MULTIPLY{\ortogonaly}{#6}{\centroy}
\ADD{#3}{\centrox}{\centrox}
\ADD{#4}{\centroy}{\centroy}
\changereferencesystem(\centrox,\centroy)(\ortogonalx,\ortogonaly)(\tangentex,\tangentey)
\radiansangles
\SUBTRACT{\numberPI}{#5}{\angulo}
\ellipticArc{#6}{#6}{\angulo}{\numberPI}
\COS{\angulo}{\puntox}
\SIN{\angulo}{\puntoy}
\MULTIPLY{#6}{\puntox}{\puntox}
\MULTIPLY{#6}{\puntoy}{\puntoy}
\SUBTRACT{0}{\puntox}{\direcciony}
\COPY{\puntoy}{\direccionx}
\norma{\direccionx}{\direcciony}
\DIVIDE{\direccionx}{\norm}{\direccionx}
\DIVIDE{\direcciony}{\norm}{\direcciony}
\COPY{\direcciony}{\ortogonalx}
\COPY{\direccionx}{\ortogonaly}
\SUBTRACT{0}{\ortogonaly}{\ortogonaly}
\changereferencesystem(\puntox,\puntoy)(\ortogonalx,\ortogonaly)(\direccionx,\direcciony)
}

\newcommand{\arcoblanco}[6]{%
\norma{#1}{#2}
\DIVIDE{#2}{\norm}{\tangentey}
\DIVIDE{#1}{\norm}{\tangentex}
\COPY{\tangentey}{\ortogonalx}
\COPY{\tangentey}{\centrox}
\SUBTRACT{0}{\tangentex}{\ortogonaly}
\MULTIPLY{\centrox}{#6}{\centrox}
\MULTIPLY{\ortogonaly}{#6}{\centroy}
\ADD{#3}{\centrox}{\centrox}
\ADD{#4}{\centroy}{\centroy}
\changereferencesystem(\centrox,\centroy)(\ortogonalx,\ortogonaly)(\tangentex,\tangentey)
\radiansangles
\SUBTRACT{\numberPI}{#5}{\angulo}
\COS{\angulo}{\puntox}
\SIN{\angulo}{\puntoy}
\MULTIPLY{#6}{\puntox}{\puntox}
\MULTIPLY{#6}{\puntoy}{\puntoy}
\Put(\puntox,\puntoy){\circle*{0.04}}
\SUBTRACT{0}{\puntox}{\direcciony}
\COPY{\puntoy}{\direccionx}
\norma{\direccionx}{\direcciony}
\DIVIDE{\direccionx}{\norm}{\direccionx}
\DIVIDE{\direcciony}{\norm}{\direcciony}
\COPY{\direcciony}{\ortogonalx}
\COPY{\direccionx}{\ortogonaly}
\SUBTRACT{0}{\ortogonaly}{\ortogonaly}
\changereferencesystem(\puntox,\puntoy)(\ortogonalx,\ortogonaly)(\direccionx,\direcciony)
}

\newcommand{\arcoblancosinpunto}[6]{%
\norma{#1}{#2}
\DIVIDE{#2}{\norm}{\tangentey}
\DIVIDE{#1}{\norm}{\tangentex}
\COPY{\tangentey}{\ortogonalx}
\COPY{\tangentey}{\centrox}
\SUBTRACT{0}{\tangentex}{\ortogonaly}
\MULTIPLY{\centrox}{#6}{\centrox}
\MULTIPLY{\ortogonaly}{#6}{\centroy}
\ADD{#3}{\centrox}{\centrox}
\ADD{#4}{\centroy}{\centroy}
\changereferencesystem(\centrox,\centroy)(\ortogonalx,\ortogonaly)(\tangentex,\tangentey)
\radiansangles
\SUBTRACT{\numberPI}{#5}{\angulo}
\COS{\angulo}{\puntox}
\SIN{\angulo}{\puntoy}
\MULTIPLY{#6}{\puntox}{\puntox}
\MULTIPLY{#6}{\puntoy}{\puntoy}
\SUBTRACT{0}{\puntox}{\direcciony}
\COPY{\puntoy}{\direccionx}
\norma{\direccionx}{\direcciony}
\DIVIDE{\direccionx}{\norm}{\direccionx}
\DIVIDE{\direcciony}{\norm}{\direcciony}
\COPY{\direcciony}{\ortogonalx}
\COPY{\direccionx}{\ortogonaly}
\SUBTRACT{0}{\ortogonaly}{\ortogonaly}
\changereferencesystem(\puntox,\puntoy)(\ortogonalx,\ortogonaly)(\direccionx,\direcciony)
}

\newcommand{\segmento}[4]{%
\MULTIPLY{#3}{#4}{\long}
\MULTIPLY{#1}{\long}{\dirx}
\MULTIPLY{#2}{\long}{\diry}
\xLINE(0,0)(\dirx,\diry)
\Put(\dirx,\diry){\circle*{0.08}}
\changereferencesystem(\dirx,\diry)(#2,-#1)(#1,#2)
}

\newcommand{\segmentosinpunto}[4]{%
\MULTIPLY{#3}{#4}{\long}
\MULTIPLY{#1}{\long}{\dirx}
\MULTIPLY{#2}{\long}{\diry}
\xLINE(0,0)(\dirx,\diry)
\changereferencesystem(\dirx,\diry)(#2,-#1)(#1,#2)
}

\newcommand{\segmentoblanco}[4]{%
\MULTIPLY{#3}{#4}{\long}
\MULTIPLY{#1}{\long}{\dirx}
\MULTIPLY{#2}{\long}{\diry}
\Put(\dirx,\diry){\circle*{0.04}}
\changereferencesystem(\dirx,\diry)(#2,-#1)(#1,#2)
}

\newcommand{\segmentoblancosinpunto}[4]{%
\MULTIPLY{#3}{#4}{\long}
\MULTIPLY{#1}{\long}{\dirx}
\MULTIPLY{#2}{\long}{\diry}
\changereferencesystem(\dirx,\diry)(#2,-#1)(#1,#2)
}

\setlength{\unitlength}{1.5cm}%

\begin{Picture}(-1,-1)(4.5,3)
\thicklines

\referencesystem(0,0)(1,0)(0,1)

\COPY{1}{\rad}
\COPY{0.4}{\ang}
\DIVIDE{\ang}{2}{\angbis}
\DIVIDE{\ang}{3}{\angbisbis}

\COPY{-1}{\taux}
\COPY{1}{\tauy}
\COPY{0}{\pux}
\COPY{0}{\puy}
\Put(\pux,\puy){\circle*{0.08}} \Put(0.1,-0.1){$p$}

\arco{\taux}{\tauy}{\pux}{\puy}{\ang}{\rad}
\arco{0}{1}{0}{0}{\ang}{\rad}
\arco{0}{1}{0}{0}{\ang}{\rad}
\segmento{1}{0}{\ang}{\rad}
\segmento{0}{1}{\ang}{\rad}

\changereferencesystem(0,0)(-1,0)(0,1)

\arco{0}{1}{0}{0}{\ang}{\rad}
\arco{0}{1}{0}{0}{\ang}{\rad}
\arco{0}{1}{0}{0}{\ang}{\rad}
\arco{0}{1}{0}{0}{\ang}{\rad}

\segmento{-1}{0}{\ang}{\rad}
\segmento{1}{0}{\ang}{\rad}
\segmento{0}{1}{\ang}{\rad}
\segmento{-1}{0}{\ang}{\rad}

\segmentosinpunto{0}{1}{\angbis}{\rad}
\segmentoblanco{0}{1}{\angbisbis}{\rad}
\segmentoblanco{0}{1}{\angbisbis}{\rad}
\segmentoblanco{0}{1}{\angbisbis}{\rad}

\end{Picture}
\end{center}

\caption{Enriques diagram of the non-positive at infinity valuation $\nu$ in Example 1}
  \label{fig11}
\end{figure}

By Corollary \ref{nonpos}, and since $\nu_r$ is not minimal, we get that $\Delta_\nu$ is a triangle with vertices $(0,0)$,
\[
Q_2= \left(\frac{1}{\mathrm{vol(\nu_r)}\nu_r(v)}, \frac{\nu_r(\varphi_\eta)+1}{\nu_r(v)}\right) = \left(\frac{303}{18}, \frac{102}{8} \right) \]
and
\[
Q_3=  \left(\nu_r(v), \nu_\eta(v)\right) = \left( 18, 6 \right).
\]

Notice that one could compute, in a similar way, the vertices of the Newton-Okounkov body in the case in which $L$ goes through more free points $p_i$.\\

\noindent {\bf Example 2.} With the notation of this paper, consider the homogeneous polynomial $f_1= Y^2 X + Z^3$ and the curve $C$ given by $f=0$, where
\[
f = \left( \left( f_1 Z + a Y^4\right)^3 - f_1^4 \right) / X^2 ;  \;\; \mathbb{C} \ni a \neq 0.
\]
It has degree 10 and, by \cite{melle}, it defines at $p=(1:0:0)$ an analytically irreducible germ $\varphi$ whose sequence of multiplicities is $6, 3, 3,3,3,3,3,3,1,1,1$. Now, consider the divisorial valuation $\nu_r$ centered at the local ring at $p$ with sequence of maximal contact values $\{\betabarra_j (\nu_r)\}_{j=0}^3 = \{8,12,45,180\}$ and cluster of centers   ${\mathcal C}_{\nu_r} = \{p_i\}_{i=1}^{12}$ which shares with $f=0$ the first eleven points and $p_{12} \in E_{11} \cap E_8$. Consider the flag valuation $\nu = \nu_{E_\bullet}$ defined by $E_\bullet = \{ X=X_{12} \supset E_{12}\supset \{q=p_{13}\}\}$, where $q \in E_{8} \cap E_{12}$.

\begin{figure}
\begin{center}

\newcommand{\norma}[2]{%
\SQUARE{#1}{\m}
\SQUARE{#2}{\n}
\ADD{\m}{\n}{\k}
\SQUAREROOT{\k}{\norm}
}


\newcommand{\arco}[6]{%
\norma{#1}{#2}
\DIVIDE{#2}{\norm}{\tangentey}
\DIVIDE{#1}{\norm}{\tangentex}
\COPY{\tangentey}{\ortogonalx}
\COPY{\tangentey}{\centrox}
\SUBTRACT{0}{\tangentex}{\ortogonaly}
\MULTIPLY{\centrox}{#6}{\centrox}
\MULTIPLY{\ortogonaly}{#6}{\centroy}
\ADD{#3}{\centrox}{\centrox}
\ADD{#4}{\centroy}{\centroy}
\changereferencesystem(\centrox,\centroy)(\ortogonalx,\ortogonaly)(\tangentex,\tangentey)
\radiansangles
\SUBTRACT{\numberPI}{#5}{\angulo}
\ellipticArc{#6}{#6}{\angulo}{\numberPI}
\COS{\angulo}{\puntox}
\SIN{\angulo}{\puntoy}
\MULTIPLY{#6}{\puntox}{\puntox}
\MULTIPLY{#6}{\puntoy}{\puntoy}
\Put(\puntox,\puntoy){\circle*{0.08}}
\SUBTRACT{0}{\puntox}{\direcciony}
\COPY{\puntoy}{\direccionx}
\norma{\direccionx}{\direcciony}
\DIVIDE{\direccionx}{\norm}{\direccionx}
\DIVIDE{\direcciony}{\norm}{\direcciony}
\COPY{\direcciony}{\ortogonalx}
\COPY{\direccionx}{\ortogonaly}
\SUBTRACT{0}{\ortogonaly}{\ortogonaly}
\changereferencesystem(\puntox,\puntoy)(\ortogonalx,\ortogonaly)(\direccionx,\direcciony)
}

\newcommand{\arcosinpunto}[6]{%
\norma{#1}{#2}
\DIVIDE{#2}{\norm}{\tangentey}
\DIVIDE{#1}{\norm}{\tangentex}
\COPY{\tangentey}{\ortogonalx}
\COPY{\tangentey}{\centrox}
\SUBTRACT{0}{\tangentex}{\ortogonaly}
\MULTIPLY{\centrox}{#6}{\centrox}
\MULTIPLY{\ortogonaly}{#6}{\centroy}
\ADD{#3}{\centrox}{\centrox}
\ADD{#4}{\centroy}{\centroy}
\changereferencesystem(\centrox,\centroy)(\ortogonalx,\ortogonaly)(\tangentex,\tangentey)
\radiansangles
\SUBTRACT{\numberPI}{#5}{\angulo}
\ellipticArc{#6}{#6}{\angulo}{\numberPI}
\COS{\angulo}{\puntox}
\SIN{\angulo}{\puntoy}
\MULTIPLY{#6}{\puntox}{\puntox}
\MULTIPLY{#6}{\puntoy}{\puntoy}
\SUBTRACT{0}{\puntox}{\direcciony}
\COPY{\puntoy}{\direccionx}
\norma{\direccionx}{\direcciony}
\DIVIDE{\direccionx}{\norm}{\direccionx}
\DIVIDE{\direcciony}{\norm}{\direcciony}
\COPY{\direcciony}{\ortogonalx}
\COPY{\direccionx}{\ortogonaly}
\SUBTRACT{0}{\ortogonaly}{\ortogonaly}
\changereferencesystem(\puntox,\puntoy)(\ortogonalx,\ortogonaly)(\direccionx,\direcciony)
}

\newcommand{\arcoblanco}[6]{%
\norma{#1}{#2}
\DIVIDE{#2}{\norm}{\tangentey}
\DIVIDE{#1}{\norm}{\tangentex}
\COPY{\tangentey}{\ortogonalx}
\COPY{\tangentey}{\centrox}
\SUBTRACT{0}{\tangentex}{\ortogonaly}
\MULTIPLY{\centrox}{#6}{\centrox}
\MULTIPLY{\ortogonaly}{#6}{\centroy}
\ADD{#3}{\centrox}{\centrox}
\ADD{#4}{\centroy}{\centroy}
\changereferencesystem(\centrox,\centroy)(\ortogonalx,\ortogonaly)(\tangentex,\tangentey)
\radiansangles
\SUBTRACT{\numberPI}{#5}{\angulo}
\COS{\angulo}{\puntox}
\SIN{\angulo}{\puntoy}
\MULTIPLY{#6}{\puntox}{\puntox}
\MULTIPLY{#6}{\puntoy}{\puntoy}
\Put(\puntox,\puntoy){\circle*{0.04}}
\SUBTRACT{0}{\puntox}{\direcciony}
\COPY{\puntoy}{\direccionx}
\norma{\direccionx}{\direcciony}
\DIVIDE{\direccionx}{\norm}{\direccionx}
\DIVIDE{\direcciony}{\norm}{\direcciony}
\COPY{\direcciony}{\ortogonalx}
\COPY{\direccionx}{\ortogonaly}
\SUBTRACT{0}{\ortogonaly}{\ortogonaly}
\changereferencesystem(\puntox,\puntoy)(\ortogonalx,\ortogonaly)(\direccionx,\direcciony)
}

\newcommand{\arcoblancosinpunto}[6]{%
\norma{#1}{#2}
\DIVIDE{#2}{\norm}{\tangentey}
\DIVIDE{#1}{\norm}{\tangentex}
\COPY{\tangentey}{\ortogonalx}
\COPY{\tangentey}{\centrox}
\SUBTRACT{0}{\tangentex}{\ortogonaly}
\MULTIPLY{\centrox}{#6}{\centrox}
\MULTIPLY{\ortogonaly}{#6}{\centroy}
\ADD{#3}{\centrox}{\centrox}
\ADD{#4}{\centroy}{\centroy}
\changereferencesystem(\centrox,\centroy)(\ortogonalx,\ortogonaly)(\tangentex,\tangentey)
\radiansangles
\SUBTRACT{\numberPI}{#5}{\angulo}
\COS{\angulo}{\puntox}
\SIN{\angulo}{\puntoy}
\MULTIPLY{#6}{\puntox}{\puntox}
\MULTIPLY{#6}{\puntoy}{\puntoy}
\SUBTRACT{0}{\puntox}{\direcciony}
\COPY{\puntoy}{\direccionx}
\norma{\direccionx}{\direcciony}
\DIVIDE{\direccionx}{\norm}{\direccionx}
\DIVIDE{\direcciony}{\norm}{\direcciony}
\COPY{\direcciony}{\ortogonalx}
\COPY{\direccionx}{\ortogonaly}
\SUBTRACT{0}{\ortogonaly}{\ortogonaly}
\changereferencesystem(\puntox,\puntoy)(\ortogonalx,\ortogonaly)(\direccionx,\direcciony)
}

\newcommand{\segmento}[4]{%
\MULTIPLY{#3}{#4}{\long}
\MULTIPLY{#1}{\long}{\dirx}
\MULTIPLY{#2}{\long}{\diry}
\xLINE(0,0)(\dirx,\diry)
\Put(\dirx,\diry){\circle*{0.08}}
\changereferencesystem(\dirx,\diry)(#2,-#1)(#1,#2)
}

\newcommand{\segmentosinpunto}[4]{%
\MULTIPLY{#3}{#4}{\long}
\MULTIPLY{#1}{\long}{\dirx}
\MULTIPLY{#2}{\long}{\diry}
\xLINE(0,0)(\dirx,\diry)
\changereferencesystem(\dirx,\diry)(#2,-#1)(#1,#2)
}

\newcommand{\segmentoblanco}[4]{%
\MULTIPLY{#3}{#4}{\long}
\MULTIPLY{#1}{\long}{\dirx}
\MULTIPLY{#2}{\long}{\diry}
\Put(\dirx,\diry){\circle*{0.04}}
\changereferencesystem(\dirx,\diry)(#2,-#1)(#1,#2)
}

\newcommand{\segmentoblancosinpunto}[4]{%
\MULTIPLY{#3}{#4}{\long}
\MULTIPLY{#1}{\long}{\dirx}
\MULTIPLY{#2}{\long}{\diry}
\changereferencesystem(\dirx,\diry)(#2,-#1)(#1,#2)
}

\setlength{\unitlength}{1.5cm}%

\begin{Picture}(-1.2,6)(4.5,8)
\thicklines

\referencesystem(0,0)(1,0)(0,1)

\COPY{1}{\rad}
\COPY{0.4}{\ang}
\DIVIDE{\ang}{2}{\angbis}
\DIVIDE{\ang}{3}{\angbisbis}
\COPY{0.7}{\angtris}

\COPY{0}{\taux}
\COPY{1}{\tauy}
\COPY{0}{\pux}
\COPY{7}{\puy}

\Put(1.5,6.5){\circle*{0.08}}
\Put(1.4,6.3){\tiny{$C$}}
\xLINE(1.5,6.5)(3.2,6.6)

\Put(0,7){\circle*{0.08}}
\Put(-0.05,7.1){\tiny{1}}
\changereferencesystem(0,7)(1,0)(0,1)
\segmento{1}{0}{\ang}{\rad}
\Put(-0.1,-0.05){\tiny{3}}
\segmento{1}{0}{\ang}{\rad}
\Put(-0.1,-0.05){\tiny{2}}
\segmento{0}{-1}{\ang}{\rad}
\segmento{1}{0}{\ang}{\rad}
\Put(-0.1,-0.05){\tiny{4}}
\segmento{0}{1}{\ang}{\rad}
\Put(-0.1,-0.05){\tiny{5}}
\segmento{0}{1}{\ang}{\rad}
\Put(-0.1,-0.05){\tiny{6}}
\segmento{0}{1}{\ang}{\rad}
\Put(-0.1,-0.05){\tiny{7}}
\segmento{0}{1}{\ang}{\rad}
\Put(-0.1,-0.05){\tiny{8}}
\segmentosinpunto{0}{1}{\angbisbis}{\rad}
\Put(0.01,0.1){$\ldots$}
\changereferencesystem(0,0.5)(1,0)(0,1)
\segmentosinpunto{0}{1}{\angbisbis}{\rad}
\Put(0,0){\circle*{0.08}}
\Put(-0.1,-0.05){\tiny{12}}
\segmento{1}{0}{\ang}{\rad}
\Put(-0.1,-0.05){\tiny{11}}
\segmento{0}{1}{\ang}{\rad}
\Put(-0.1,-0.05){\tiny{10}}
\segmento{0}{1}{\ang}{\rad}
\Put(-0.1,-0.05){\tiny{9}}

\end{Picture}
\end{center}

\caption{Graph $\Gamma_{\nu}^*$ in Example 2}
  \label{figura13}
\end{figure}

We have $\nu_r (f) =135$ and $135 > 10 \sqrt{180}$,  hence $C$ is supraminimal. In Figure \ref{figura13}, we depict the graph $\Gamma_{\nu}^*$ of that valuation. In addition
\[
\frac{\nu(f)}{\deg(f)} = \frac{(135,33)}{10},
\]
then by Theorems \ref{53} and \ref{triangulo}, the Newton-Okounkov body $\Delta_\nu$ is a triangle with vertices $(0,0)$,
\[
\frac{10}{135} \left( 180, 45 \right)  \; \; \mbox{and} \; \; \left(\frac{135}{10}, \frac{33}{10} \right).
\]
{\bf Example 3.} Let $\nu_r$ be a divisorial valuation with sequence of maximal contact values $ \{\betabarra_j (\nu_r) \}_{j=0}^2 = \{ 48, 329, 15792 \} $. The points $p_i$, $1 \leq i \leq 7$, of the configuration $C_{\nu_r} = \{p_i\}_{i=1}^{19}$ are free and there exists a nodal cubic $C$ (defined by certain equation $f=0$) passing through these points with multiplicities $2, 1, 1, 1, 1, 1, 1$. Thus $\nu_r (f) = 48 \cdot 7 + 41 = 377$. Then
\[
\frac{\nu_r(f)}{3}  > \sqrt{\betabarra_2 (\nu_r)},
\]
and $C$ is supraminimal.

Let now  $\nu = \nu_{E_\bullet}$ be the flag valuation defined by $E_\bullet = \left\{ X=X_{19} \supset E_{19}\supset \{q=p_{20}\}\right\}$, where $q \in E_{13} \cap E_{19}$. It holds that
\[
\frac{\nu(f)}{\deg(f)} = \frac{(377,55)}{3} = Q_3,
\]
and applying Theorem \ref{53}, we get that the Newton-Okounkov body $\Delta_\nu$ is a quadrilateral with vertices $(0,0)$,
\[
Q_1= \left(\frac{15792 \cdot 3}{377} , \frac{2303 \cdot 3}{377} \right),  \; \; Q_2= \left(\frac{15792 \cdot 3}{377} , \frac{2304 \cdot 3}{377} \right) \; \; \mbox{and} \; \; Q_3.
\]

We conclude this example (and the paper) showing in Figure \ref{figura12} the dual graph $\Gamma^*_\nu$, which is connected as we state in Theorem \ref{triangulo}.

\begin{figure}[h]
\begin{center}

\newcommand{\norma}[2]{%
\SQUARE{#1}{\m}
\SQUARE{#2}{\n}
\ADD{\m}{\n}{\k}
\SQUAREROOT{\k}{\norm}
}


\newcommand{\arco}[6]{%
\norma{#1}{#2}
\DIVIDE{#2}{\norm}{\tangentey}
\DIVIDE{#1}{\norm}{\tangentex}
\COPY{\tangentey}{\ortogonalx}
\COPY{\tangentey}{\centrox}
\SUBTRACT{0}{\tangentex}{\ortogonaly}
\MULTIPLY{\centrox}{#6}{\centrox}
\MULTIPLY{\ortogonaly}{#6}{\centroy}
\ADD{#3}{\centrox}{\centrox}
\ADD{#4}{\centroy}{\centroy}
\changereferencesystem(\centrox,\centroy)(\ortogonalx,\ortogonaly)(\tangentex,\tangentey)
\radiansangles
\SUBTRACT{\numberPI}{#5}{\angulo}
\ellipticArc{#6}{#6}{\angulo}{\numberPI}
\COS{\angulo}{\puntox}
\SIN{\angulo}{\puntoy}
\MULTIPLY{#6}{\puntox}{\puntox}
\MULTIPLY{#6}{\puntoy}{\puntoy}
\Put(\puntox,\puntoy){\circle*{0.08}}
\SUBTRACT{0}{\puntox}{\direcciony}
\COPY{\puntoy}{\direccionx}
\norma{\direccionx}{\direcciony}
\DIVIDE{\direccionx}{\norm}{\direccionx}
\DIVIDE{\direcciony}{\norm}{\direcciony}
\COPY{\direcciony}{\ortogonalx}
\COPY{\direccionx}{\ortogonaly}
\SUBTRACT{0}{\ortogonaly}{\ortogonaly}
\changereferencesystem(\puntox,\puntoy)(\ortogonalx,\ortogonaly)(\direccionx,\direcciony)
}

\newcommand{\arcosinpunto}[6]{%
\norma{#1}{#2}
\DIVIDE{#2}{\norm}{\tangentey}
\DIVIDE{#1}{\norm}{\tangentex}
\COPY{\tangentey}{\ortogonalx}
\COPY{\tangentey}{\centrox}
\SUBTRACT{0}{\tangentex}{\ortogonaly}
\MULTIPLY{\centrox}{#6}{\centrox}
\MULTIPLY{\ortogonaly}{#6}{\centroy}
\ADD{#3}{\centrox}{\centrox}
\ADD{#4}{\centroy}{\centroy}
\changereferencesystem(\centrox,\centroy)(\ortogonalx,\ortogonaly)(\tangentex,\tangentey)
\radiansangles
\SUBTRACT{\numberPI}{#5}{\angulo}
\ellipticArc{#6}{#6}{\angulo}{\numberPI}
\COS{\angulo}{\puntox}
\SIN{\angulo}{\puntoy}
\MULTIPLY{#6}{\puntox}{\puntox}
\MULTIPLY{#6}{\puntoy}{\puntoy}
\SUBTRACT{0}{\puntox}{\direcciony}
\COPY{\puntoy}{\direccionx}
\norma{\direccionx}{\direcciony}
\DIVIDE{\direccionx}{\norm}{\direccionx}
\DIVIDE{\direcciony}{\norm}{\direcciony}
\COPY{\direcciony}{\ortogonalx}
\COPY{\direccionx}{\ortogonaly}
\SUBTRACT{0}{\ortogonaly}{\ortogonaly}
\changereferencesystem(\puntox,\puntoy)(\ortogonalx,\ortogonaly)(\direccionx,\direcciony)
}

\newcommand{\arcoblanco}[6]{%
\norma{#1}{#2}
\DIVIDE{#2}{\norm}{\tangentey}
\DIVIDE{#1}{\norm}{\tangentex}
\COPY{\tangentey}{\ortogonalx}
\COPY{\tangentey}{\centrox}
\SUBTRACT{0}{\tangentex}{\ortogonaly}
\MULTIPLY{\centrox}{#6}{\centrox}
\MULTIPLY{\ortogonaly}{#6}{\centroy}
\ADD{#3}{\centrox}{\centrox}
\ADD{#4}{\centroy}{\centroy}
\changereferencesystem(\centrox,\centroy)(\ortogonalx,\ortogonaly)(\tangentex,\tangentey)
\radiansangles
\SUBTRACT{\numberPI}{#5}{\angulo}
\COS{\angulo}{\puntox}
\SIN{\angulo}{\puntoy}
\MULTIPLY{#6}{\puntox}{\puntox}
\MULTIPLY{#6}{\puntoy}{\puntoy}
\Put(\puntox,\puntoy){\circle*{0.04}}
\SUBTRACT{0}{\puntox}{\direcciony}
\COPY{\puntoy}{\direccionx}
\norma{\direccionx}{\direcciony}
\DIVIDE{\direccionx}{\norm}{\direccionx}
\DIVIDE{\direcciony}{\norm}{\direcciony}
\COPY{\direcciony}{\ortogonalx}
\COPY{\direccionx}{\ortogonaly}
\SUBTRACT{0}{\ortogonaly}{\ortogonaly}
\changereferencesystem(\puntox,\puntoy)(\ortogonalx,\ortogonaly)(\direccionx,\direcciony)
}

\newcommand{\arcoblancosinpunto}[6]{%
\norma{#1}{#2}
\DIVIDE{#2}{\norm}{\tangentey}
\DIVIDE{#1}{\norm}{\tangentex}
\COPY{\tangentey}{\ortogonalx}
\COPY{\tangentey}{\centrox}
\SUBTRACT{0}{\tangentex}{\ortogonaly}
\MULTIPLY{\centrox}{#6}{\centrox}
\MULTIPLY{\ortogonaly}{#6}{\centroy}
\ADD{#3}{\centrox}{\centrox}
\ADD{#4}{\centroy}{\centroy}
\changereferencesystem(\centrox,\centroy)(\ortogonalx,\ortogonaly)(\tangentex,\tangentey)
\radiansangles
\SUBTRACT{\numberPI}{#5}{\angulo}
\COS{\angulo}{\puntox}
\SIN{\angulo}{\puntoy}
\MULTIPLY{#6}{\puntox}{\puntox}
\MULTIPLY{#6}{\puntoy}{\puntoy}
\SUBTRACT{0}{\puntox}{\direcciony}
\COPY{\puntoy}{\direccionx}
\norma{\direccionx}{\direcciony}
\DIVIDE{\direccionx}{\norm}{\direccionx}
\DIVIDE{\direcciony}{\norm}{\direcciony}
\COPY{\direcciony}{\ortogonalx}
\COPY{\direccionx}{\ortogonaly}
\SUBTRACT{0}{\ortogonaly}{\ortogonaly}
\changereferencesystem(\puntox,\puntoy)(\ortogonalx,\ortogonaly)(\direccionx,\direcciony)
}

\newcommand{\segmento}[4]{%
\MULTIPLY{#3}{#4}{\long}
\MULTIPLY{#1}{\long}{\dirx}
\MULTIPLY{#2}{\long}{\diry}
\xLINE(0,0)(\dirx,\diry)
\Put(\dirx,\diry){\circle*{0.08}}
\changereferencesystem(\dirx,\diry)(#2,-#1)(#1,#2)
}

\newcommand{\segmentosinpunto}[4]{%
\MULTIPLY{#3}{#4}{\long}
\MULTIPLY{#1}{\long}{\dirx}
\MULTIPLY{#2}{\long}{\diry}
\xLINE(0,0)(\dirx,\diry)
\changereferencesystem(\dirx,\diry)(#2,-#1)(#1,#2)
}

\newcommand{\segmentoblanco}[4]{%
\MULTIPLY{#3}{#4}{\long}
\MULTIPLY{#1}{\long}{\dirx}
\MULTIPLY{#2}{\long}{\diry}
\Put(\dirx,\diry){\circle*{0.04}}
\changereferencesystem(\dirx,\diry)(#2,-#1)(#1,#2)
}

\newcommand{\segmentoblancosinpunto}[4]{%
\MULTIPLY{#3}{#4}{\long}
\MULTIPLY{#1}{\long}{\dirx}
\MULTIPLY{#2}{\long}{\diry}
\changereferencesystem(\dirx,\diry)(#2,-#1)(#1,#2)
}

\setlength{\unitlength}{1.5cm}%

\begin{Picture}(-1.2,6)(8,8)
\thicklines

\referencesystem(0,0)(1,0)(0,1)

\COPY{1}{\rad}
\COPY{0.4}{\ang}
\DIVIDE{\ang}{2}{\angbis}
\DIVIDE{\ang}{3}{\angbisbis}
\COPY{0.7}{\angtris}

\COPY{0}{\taux}
\COPY{1}{\tauy}
\COPY{0}{\pux}
\COPY{7}{\puy}

\Put(3.5,6){\circle*{0.08}}
\Put(3.4,5.8){\tiny{$C$}}
\xLINE(3.5,6)(0,7)
\xLINE(3.5,6)(6.4,5.95)

\Put(0,7){\circle*{0.08}}
\Put(-0.05,7.1){\tiny{1}}
\changereferencesystem(0,7)(1,0)(0,1)
\segmento{1}{0}{\ang}{\rad}
\Put(-0.1,-0.05){\tiny{2}}
\segmento{0}{1}{\ang}{\rad}
\Put(-0.1,-0.05){\tiny{3}}
\segmento{0}{1}{\ang}{\rad}
\Put(-0.1,-0.05){\tiny{4}}
\segmento{0}{1}{\ang}{\rad}
\Put(-0.1,-0.05){\tiny{5}}
\segmento{0}{1}{\ang}{\rad}
\Put(-0.1,-0.05){\tiny{6}}
\segmento{0}{1}{\ang}{\rad}
\Put(-0.1,-0.05){\tiny{8}}
\segmento{0}{1}{\ang}{\rad}
\Put(-0.1,-0.05){\tiny{9}}
\segmento{0}{1}{\ang}{\rad}
\Put(-0.1,-0.05){\tiny{10}}
\segmento{0}{1}{\ang}{\rad}
\Put(-0.1,-0.05){\tiny{11}}
\segmento{0}{1}{\ang}{\rad}
\Put(-0.1,-0.05){\tiny{12}}
\segmento{0}{1}{\ang}{\rad}
\Put(-0.1,-0.05){\tiny{14}}
\segmento{0}{1}{\ang}{\rad}
\Put(-0.1,-0.05){\tiny{15}}
\segmento{0}{1}{\ang}{\rad}
\Put(-0.1,-0.05){\tiny{16}}
\segmento{0}{1}{\ang}{\rad}
\Put(-0.1,-0.05){\tiny{17}}
\segmento{0}{1}{\ang}{\rad}
\Put(-0.1,-0.05){\tiny{18}}
\segmento{0}{1}{\ang}{\rad}
\Put(0,0){\circle*{0.08}}
\Put(-0.1,-0.05){\tiny{19}}
\segmentosinpunto{1}{0}{\angbisbis}{\rad}
\Put(0.04,0.3){$\vdots$}

\changereferencesystem(0,0.4)(1,0)(0,1)
\segmentosinpunto{0}{1}{\angbisbis}{\rad}
\Put(0,0){\circle*{0.08}}
\Put(-0.1,0){\tiny{13}}
\segmento{0}{1}{\ang}{\rad}
\Put(0,0){\circle*{0.08}}
\Put(-0.1,0){\tiny{7}}


\end{Picture}
\end{center}

\caption{Graph $\Gamma_{\nu}^*$ in Example 3}
  \label{figura12}
\end{figure}

\section*{Acknowledgements}

The authors wish to thank A. K\"uronya  and J. Ro\'e for stimulating their interest in Newton-Okounkov bodies and their helpful comments. Moreover, they would like to thank the CRM (Centre de Recerca Matem\`atica at Barcelona) for hosting the workshop 'Positivity and valuations' where this cooperation started.

\end{document}